\newcommand\ostar{\mathbin{\Circled{\star}}}
\numberwithin{equation}{section}
  \newtheorem{THM}{Theorem}[section]
  \newtheorem{LEM}[THM]{Lemma}
  \newtheorem{PROP}[THM]{Proposition}
  \newtheorem{COR}[THM]{Corollary}
  \newtheorem{EX}[THM]{Example}
  \newtheorem{CONV}[THM]{Convention}
\newif\ifQEDsign
\newcommand{\QED}{\global\QEDsigntrue\hfill$\square$}
\newenvironment{proof}%
    {\par\noindent\textit{Proof.}\global\QEDsignfalse}%
    {\ifQEDsign\else\QED\fi\par\bigskip\par}
\def\labelenumi{(\roman{enumi})}
\renewcommand{\le}{\leqslant}
\renewcommand{\ge}{\geqslant}
\newcommand{\0}{\varnothing}
\renewcommand{\phi}{\varphi}
\renewcommand{\epsilon}{\varepsilon}
\newcommand{\UNION}{\bigcup}
\newcommand{\AAA}{\mathbf{A}}
\newcommand{\CC}{\mathbf{C}}
\newcommand{\DD}{\mathbf{D}}
\newcommand{\LO}{\mathbf{Lo}}
\newcommand{\KK}{\mathbf{K}}
\newcommand{\MM}{\mathbf{M}}
\newcommand{\NN}{\mathbb{N}}
\newcommand{\QQ}{\mathbb{Q}}
\newcommand{\RR}{\mathbb{R}}
\renewcommand{\SS}{\mathbf{S}}
\newcommand{\union}{\cup}
\newcommand{\reduct}[2]{\hbox{$#1$}\hbox{$|$}_{#2}}
\newcommand{\restr}[2]{\hbox{$#1$}\hbox{$\upharpoonright$}_{#2}}
\newcommand{\Boxed}[1]{\mbox{$#1$}}
\newcommand{\id}{\mathrm{id}}
\newcommand{\im}{\mathrm{im}}
\newcommand{\ID}{\mathrm{ID}}
\newcommand{\Ob}{\mathrm{Ob}}
\newcommand{\Mor}{\mathrm{Mor}}
\newcommand{\Emb}{\mathrm{Emb}}
\newcommand{\RSurj}{\mathrm{RSurj}}
\newcommand{\op}{\mathrm{op}}
\newcommand{\fin}{\mathit{fin}}
\newcommand{\Fin}{\mathrm{Fin}}
\newcommand{\Age}{\mathrm{Age}}
\newcommand{\iso}{\mathrm{iso}}
\newcommand{\frakA}{\mathfrak{Aut}}
\newcommand{\frakG}{\mathfrak{G}}
\newcommand{\frakH}{\mathfrak{H}}
\newcommand{\frakI}{\mathfrak{Id}}
\newcommand{\subobj}[3]{\binom{#3}{#2}_{#1}}
\newcommand{\calA}{\mathcal{A}}
\newcommand{\calB}{\mathcal{B}}
\newcommand{\calC}{\mathcal{C}}
\newcommand{\calD}{\mathcal{D}}
\newcommand{\calE}{\mathcal{E}}
\newcommand{\calF}{\mathcal{F}}
\newcommand{\calH}{\mathcal{H}}
\newcommand{\calM}{\mathcal{M}}
\newcommand{\calO}{\mathcal{O}}
\newcommand{\calP}{\mathcal{P}}
\newcommand{\calR}{\mathcal{R}}
\newcommand{\calS}{\mathcal{S}}
\newcommand{\calT}{\mathcal{T}}
\newcommand{\Fraisse}{Fra\"\i ss\'e}
\newcommand{\Aut}{\mathrm{Aut}}
\newcommand{\dom}{\mathrm{dom}}
\newcommand{\cod}{\mathrm{cod}}
\newcommand{\Top}{\mathbf{Top}}
\newcommand{\Haus}{\mathbf{Haus}}
\newcommand{\ChEmb}{\mathbf{Lo}_{\mathit{emb}}}
\newcommand{\WchRs}{\mathbf{Wo}_{\mathit{rs}}}
\newcommand{\Borel}{\mathrm{Borel}}
\newcommand{\Borelarrow}{\longrightarrow_\flat}
\title{On Ramsey degrees, compactness and approximability}
\author{Dragan Ma\v sulovi\'c\\
        University of Novi Sad, Faculty of Sciences\\
        Department of Mathematics and Informatics\\
        Trg Dositeja Obradovi\'ca 3, 21000 Novi Sad, Serbia\\
        email: dragan.masulovic@dmi.uns.ac.rs
}
\begin{document}
\maketitle

\begin{abstract}
  One of the consequences of the Compactness Principle in structural Ramsey theory
  is that the small Ramsey degrees cannot exceed the corresponding
  big Ramsey degrees, thereby justifying the choice of adjectives. However, it is unclear what happens in the
  realm of dual Ramsey degrees due to the lack of the compactness argument that applies to that setting.

  In this paper we present a framework within which
  both ``direct'' and dual Ramsey statements can be stated and reasoned about in a uniform fashion.
  We introduce the notion of \emph{approximability} which yields a general compactness argument powerful enough to
  prove statements about both ``direct'' and dual Ramsey phenomena.
  We conclude the paper with an application of the new strategies by generalizing Voigt's $\star$-version of
  the Infinite Ramsey Theorem to a large class of relational structures and deriving a Ramsey
  statement for ``loose colorings'' of enumerated \Fraisse\ limits.

  \medskip

  \noindent
  \textbf{Key Words and Phrases:} Ramsey degrees, Compactness Principle in Ramsey theory, enriched categories

  \medskip

  \noindent
  \textbf{MSC (2020): 05C55; 18D20} 
\end{abstract}

\section{Introduction}

One of the main benefits of exploring structural Ramsey theory using the language of category theory is the
possibility of automatic dualization. For example, a category with the embedding Ramsey property
must have amalgamation~\cite{Nesetril-Rodl} and all of its objects are rigid.
A purely categorical proof of this fact immediately yields a dual result:
a category with the dual embedding Ramsey property must have projective amalgamation, and all of its objects are also rigid.
Unfortunately, this does not apply to the Compactness Principle (see Theorem~\ref{akpt.lem->thm.ramseyF-part12}):
not only its proof involves non-categorical arguments, but also infinite Ramsey phenomena in the dual context require
additional, usually topological restrictions on the colorings. For example, the Infinite Ramsey Theorem holds under no additional assumptions,
while the Infinite Dual Ramsey Theorem holds only for well-structured (e.g.~Borel or Baire) colorings.

One of the consequences of the Compactness Principle is that the small Ramsey degrees cannot exceed the corresponding
big Ramsey degrees, thereby justifying the choice of adjectives. However, it is unclear what happens in the
realm of dual Ramsey degrees due to the lack of the compactness argument that applies to that setting.
Thus, we find ourselves in a situation where we still do not have a compactness argument which is strong enough to show \emph{at the same time} that
the Infinite Ramsey Theorem implies the Finite Ramsey Theorem, and that the Infinite Dual Ramsey Theorem implies the Finite Dual
Ramsey Theorem.

\begin{table}
  \centering
  \begin{tabular}{cccc}
    \cmidrule[\heavyrulewidth]{1-3}
    Ramsey  & \multirow{2}{*}{structural} & \multirow{2}{*}{embedding} \\
    degrees &  &  \\
    \cmidrule{1-3}
    \multirow{2}{*}{``direct''} & small & small \\
    \cmidrule(lr){2-2} \cmidrule(l){3-3}
                           & big  & big \\
    \cmidrule{1-3}
    \multirow{2}{*}{dual}   & small & small & \\
    \cmidrule(lr){2-2} \cmidrule(l){3-3}
                           & \textit{big}   & \textit{big} & $\longleftarrow  \substack{topology\\required}$ \\
    \cmidrule[\heavyrulewidth]{1-3}
  \end{tabular}
  \caption{The taxonomy of Ramsey degrees}
  \label{gfrd.fig.taxonomy}
\end{table}

In the preceding thirty or so years at least eight kinds of Ramsey degrees have been
identified (see Table~\ref{gfrd.fig.taxonomy}): we have small and big Ramsey degrees which can be
``direct'' or dual, and in each case we can consider structural (where we color substructures) or
embedding degrees (where we color morphisms). Moreover, the results concerning the big dual Ramsey
degrees hold only under certain topological restrictions.
Hands-on experience, on the other hand, suggests that all this diversity is superficial and that
foundational distinctions exist only between small and big Ramsey degrees.

In this paper we present a framework within which both ``direct'' and dual Ramsey statements can be stated and reasoned
about in a uniform fashion. All small degrees can be defined in one go, with respect to a category $\CC$ together with a
class of distinguished automorphism groups $(G_A)_{A \in \Ob(\CC)}$ where $G_A \le \Aut_\CC(A)$.
Analogously, all big degrees can be defined in one go, with respect to a category $\CC$ enriched over the
category $\Top$ of topological spaces, together with a
class of distinguished automorphism groups $(G_A)_{A \in \Ob(\CC)}$ where $G_A \le \Aut_\CC(A)$.
All the variations given in Table~\ref{gfrd.fig.taxonomy} can then be obtained by specifying
a category, an enrichment and a family of automorphism groups.

In Section~\ref{gfrd.sec.prelim} we fix notation and list some basic facts about relational structures,
\Fraisse\ theory, structural Ramsey theory and category theory.
In Section~\ref{crt.sec.general-def} we introduce a general point of view of small Ramsey degrees and prove
a generalization of Zucker's result from \cite{zucker1} that correlates the embedding and non-embedding small Ramsey degrees
(see Proposition~\ref{gfrd.prop.small-degs}).
In Section\ref{gfrd.sec.big-degs} we present a general point of view on big Ramsey degrees in categories enriched over $\Top$
using the fact that locally compact second-countable Hausdorff enrichment is stable under factoring by compact topological groups.
Moreover, this enrichment is compatible with the two typical situations we would like to model. ``Direct'' big Ramsey degrees are usually computed
in categories with discrete enrichment and with respect to all colorings, while dual big Ramsey degrees are usually computed
in categories where the homsets are endowed with the pointwise convergence topology and with respect to Borel colorings.
Both of these contexts fit into the framework of Borel colorings with respect to a locally compact second-countable Hausdorff enrichment.
In Section~\ref{gfrd.sec.compactness} we generalize the standard compactness arguments and introduce the notion of \emph{approximability}
which yields a general compactness argument (Theorem~\ref{gfrd.thm.COMPACTNESS-0}). We conclude the paper with Section~\ref{gfrd.sec.star-ramsey}
in which we present an application of the strategies developed in the preceding sections. We generalize Voigt's
$\star$-version of the Infinite Ramsey theorem to a large class of relational structures (Theorem~\ref{gfrd.thm.struct-enum-star-prop}) and derive a Ramsey
statement for ``loose colorings'' of enumerated \Fraisse\ limits (Theorem~\ref{gfrd.thm.struct-star-property}).

\section{Preliminaries}
\label{gfrd.sec.prelim}

\paragraph{Rigid surjections and parameter words.}
If $(A, \Boxed<)$ and $(B, \Boxed<)$ are well-ordered sets,
a surjection $f : A \to B$ is \emph{rigid} if $\min f^{-1}(b) < \min f^{-1}(b')$ whenever $b < b'$ in~$B$.
Let $\RSurj(A, B)$ denote the set of all rigid surjections $A \to B$.

Let $A = \{a_1 < \ldots < a_k\}$ be a finite linearly ordered alphabet. A word $u$ of length $n$ over $A$ can be thought of as
an element of $A^n$ but also as a mapping $u : \{1, 2, \ldots, n\} \to A$. In the latter case
$u^{-1}(a)$, $a \in A$, denotes the set of all the positions in $u$ where $a$ appears.
Let $X = \{x_1 < x_2 < \ldots\}$ be a countably infinite linearly ordered set of variables disjoint from $A$.
For $m \ge 1$, an \emph{$m$-parameter word over $A$ of length $n$} is a word
$w \in (A \union \{x_1, x_2, \ldots, x_m\})^n$ such that
  each of the letters $x_1, \ldots, x_m$ appears at least once in $w$, and
  $\min(w^{-1}(x_i)) < \min(w^{-1}(x_j))$ whenever $1 \le i < j \le m$.
These notions extend straightforwardly to the case where $n = \omega$ and $m$ is finite, of $n = m = \omega$.
For example, an \emph{$\omega$-parameter word over $A$ of length $\omega$} is a word
$w \in (A \union X)^\omega$ such that each letter from $X$ appears at least once in $w$, and
$\min(w^{-1}(x_i)) < \min(w^{-1}(x_j))$ whenever $i < j$.

Let $W^n_m(A)$ denote the set of all the $m$-parameter words over $A$ of length~$n$, $1 \le m, n \le \omega$.
For $u \in W^n_m(A)$ and $v = v_1 v_2 \ldots \in W^m_k(A)$ let
$$
  u \cdot v = u[v_1/x_1, v_2/x_2, \ldots] \in W^n_k(A)
$$
denote the word obtained by replacing each occurrence of $x_i$ in $u$ with $v_i$,
simultaneously for all $1 \le i \le m$.

It will also be convenient to consider partial substitutions of parameter words defined as follows.
For $u \in W^n_m(A)$ and $v = v_1 v_2 \ldots \in W^\ell_k(A)$ where $\ell \le m$ let
$u \star v$ denote the word obtained by replacing each occurrence of $x_i$ in $u$ with $v_i$ for $1 \le i \le \ell$,
and truncating the word $u$ at the first occurrence of $x_{\ell+1}$ (if $x_{\ell+1}$ appears in $u$).
The case where $u$ is an $\omega$-parameter word and $v$ is a finite word will be of particular interest.
To compute $u \star v$ we scan $u$ from left to right and substitute variables that appear in $u$ by
the corresponding letters from $v$. Once we reach a variable $x_s$ where $s > |v|$ we stop and cut off the remainder of~$u$.
Thus, if $v$ is a finite word, $u \star v$ is also finite.

Parameter words are clearly related to rigid surjections.
To an $m$-parameter word $u = u_1 n_2 \ldots u_n \in W^n_m(A)$ of length $n$ we assign a rigid surjection
$$
  f_u : \{a_1 < \ldots < a_k < 1 < \ldots < n\} \to \{a_1 < \ldots < a_k < x_1 < \ldots x_m\}
$$
so that $f_u(a_i) = a_i$, $1 \le i \le k$, and $f_u(j) = u_j$, $1 \le j \le n$.
It is easy to see that the substitution of parameter words corresponds precisely to
the composition of rigid surjections.
Partial substitution of parameter words can also be interpreted as a special form of composition of
rigid surjections, see Section~\ref{gfrd.sec.star-ramsey}.

\paragraph{Relational structures.}
Let $L$ be a relational language. An \emph{$L$-structure} is a set $A$ together with a family of finitary relations on $A$
interpreting the symbols from $L$: $\calA = (A, \{R^A : R \in L\})$. We shall often write $\calA = (A, L^A)$.
If $L_0 \subseteq L$ then the \emph{$L_0$-reduct} of an $L$-structure $\calA = (A, L^A)$
is the $L_0$ structure $\reduct \calA {L_0} = (A, \{R^A : R \in L_0\}) = (A, L_0^A)$.

Let $L$ be a relational language and let $\calA = (A, L^A)$ and $\calB = (B, L^B)$ be $L$-structures.
An \emph{embedding} $h : \calA \hookrightarrow \calB$ is an injective map $A \to B$ such that
$R^B(h(a_1), \ldots, h(a_n)) \Leftrightarrow R^A(a_1, \ldots, a_n)$ for all $R \in L$ and $a_1, \ldots, a_n \in A$,
where $n$ is the arity of~$R$. Let $\Emb(\calA, \calB)$ denote the set of all the embeddings $\calA \hookrightarrow \calB$.
An \emph{isomorphism} is a bijective embedding. We write $\calA \cong \calB$ to denote that $\calA$ and $\calB$ are
isomorphic.
We write $\calA \le \calB$ to denote that $\calA$ is a \emph{substructure} of $\calB$, that is,
$A \subseteq B$ and $R^A = R^B \cap A^n$ for every $R \in L$ (where $n$ is the arity of $R$).
Every nonempty $C \subseteq B$ uniquely determines a \emph{substructure of $\calB$ induced by $C$},
in symbols $\calB[C]$, which is the $L$-structure $\calC = (C, L^C)$ such that
$R^C = R^B \cap A^n$ for all $R \in L$ (where $n$ is the arity of $R$). For $L$-structures $\calA, \calB$ let
$$
  \binom \calB \calA = \{\calE : \calE \cong \calA \land \calE \le \cal B\}.
$$
If $h : \calA \hookrightarrow \calB$ is an embedding of $L$-structures, then $\im(h)$, \emph{the image of $h$},
denotes the substructure of $\calB$ induced by $\{h(a) : a \in A\}$.

Let $\LO$ denote the class of all linearly ordered sets $(A, \Boxed<)$. Then
for a class $\KK$ of $L$-structures we let
$$
  \KK * \LO = \{(A, L^A, \Boxed<) : (A, L^A) \in \KK \text{ and } (A, \Boxed<) \in \LO \}.
$$
Note that $\KK * \LO$ is a class of $L'$-structures where $L' = L \union \{\Boxed<\}$.

\paragraph{\Fraisse\ theory.}
The \emph{age} of a countably infinite $L$-structure $\calF$ is the class $\Age(\calF)$ of all
finite structures that embed into~$\calF$. A class $\KK$ of finite $L$-structures is an \emph{age}
if there is countably infinite $L$-structure $\calF$ such that
$\KK = \Age(\calF)$. It is easy to see that $\KK$ is an age if and only if:
  $\KK$ is closed under taking isomorphic copies;
  there are at most countably many pairwise nonisomorphic structures in $\KK$;
  $\KK$ has the \emph{hereditary property (HP) with respect to $\AAA$}:
  if $\calB \in \KK$ and $\calA \in \AAA$ such that $\calA \hookrightarrow \calB$ then $\calA \in \KK$;
  and
  $\KK$ is \emph{directed}:
  for all $\calA, \calB \in \KK$ there is a $\calC \in \KK$ such that
  $\calA \hookrightarrow \calC$ and $\calB \hookrightarrow \calC$.

An age $\KK$ is a \emph{\Fraisse\ age} if $\KK$ has \emph{amalgamation}:
for all $\calA, \calB, \calC \in \KK$ and embeddings $f : \calA \hookrightarrow \calB$ and
$g : \calA \hookrightarrow \calC$ there exist $\calD \in \KK$ and embeddings $f' : \calB \hookrightarrow \calD$ and
$g' : \calC \hookrightarrow \calD$ such that $f' \circ f = g' \circ g$.

An $L$-structure $\calF$ is \emph{ultrahomogeneous} if for every $\calA \in \Age(\calF)$ and every pair of
embeddings $f, g : \calA \hookrightarrow \calF$ there is an automorphism $h \in \Aut(\calF)$ such that
$f = h \circ g$. The age of every countably infinite ultrahomogeneous structure is a \Fraisse\ age.
Conversely, for every \Fraisse\ age $\KK$ there is a unique (up to isomorphism)
countably infinite ultrahomogeneous $L$-structure $\calF$ such that $\KK = \Age(\calF)$~\cite{Fraisse1,Fraisse2,hodges}.
We say that $\calF$ is the \emph{\Fraisse\ limit of $\KK$.}

A class $\KK$ of finite $L$-structures has the \emph{strong amalgamation property} if 
for all $\calA, \calB, \calC \in \KK$ and embeddings $f : \calA \hookrightarrow \calB$ and
$g : \calA \hookrightarrow \calC$ there exist $\calD \in \KK$ and embeddings $f' : \calB \hookrightarrow \calD$ and
$g' : \calC \hookrightarrow \calD$ such that $f' \circ f = g' \circ g$ and $f'(\calB) \cap g'(\calC) = f'(f(\calA))$.
A \Fraisse\ limit $\calF$ has \emph{strongly amalgamable age} if $\Age(\calF)$ has the strong amalgamation property.
Here is an example of some well-known \Fraisse\ classes and their \Fraisse\ limits.
They all have strongly amalgamable ages.

\begin{EX}\label{bigrd.ex.fraisse-classes}
  \begin{itemize}
  \item
    The class of all finite linear orders is a \Fraisse\ age and its \Fraisse\ limit is the usual order of the rationals $\QQ$~\cite{Fraisse1,Fraisse2}.
  \item
    The class of all finite graphs is a \Fraisse\ age and its \Fraisse\ limit is the \emph{random graph} $\calR$~\cite{Erdos-Renyi}.
  \item
    For each $n \ge 3$ the class of all finite $K_n$-free graphs is a \Fraisse\ age and its \Fraisse\ limit is the \emph{Henson graph} $\calH_n$~\cite{henson}.
  \item
    The class of all finite partially ordered sets is a \Fraisse\ age and its \Fraisse\ limit is the \emph{random poset}~$\calP$~\cite{Schmerl}.
  \item
    The class of all finite digraphs is a \Fraisse\ age and its \Fraisse\ limit is the \emph{random digraph} $\calD$.
  \item
    The class of all finite tournaments is a \Fraisse\ age and its \Fraisse\ limit is the \emph{random tournament} $\calT$.
  \item
    Let $\MM(\Delta)$ be the class of all finite metric spaces whose distances are in $\Delta \subseteq \RR$.
    A detailed analysis of those sets $\Delta$ of nonnegative reals for which $\MM(\Delta)$ is a \Fraisse\ age can
    be found in~\cite{dlps-2007} and~\cite{Sauer-2013}). If $\MM(\Delta)$ is a \Fraisse\ age its \Fraisse\ limit will be referred to as the
    \emph{random $\Delta$-metric space} and denoted by~$\calM(\Delta)$.
  \end{itemize}
\end{EX}

\paragraph{Structural Ramsey Theory.}
The leitmotif of Ramsey theory is to prove the existence of regular patterns that occur when a large structure is considered
in a combinatorially restricted context. One of its most prominent early results, which actually gave the name to the theory, is
the Infinite Ramsey Theorem. Spelled in terms of linear orders, where $\omega = \{0, 1, 2, \ldots\}$ and $n = \{0, 1, \ldots, n-1\}$,
it reads:

\begin{THM}[The Infinite Ramsey Theorem~\cite{Ramsey}]\label{gfrd.thm.IRT}
  For every finite linear order $n \in \NN$, every $k \in \NN$ and every coloring $\chi : \Emb(n, \omega) \to k$
  there is a $w \in \Emb(\omega, \omega)$ such that $|w \circ \Emb(n, \omega)| = 1$.
\end{THM}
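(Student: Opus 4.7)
The plan is to proceed by induction on $n$, following the classical Erd\H os--Szekeres--Ramsey argument, after reading the conclusion as $|\chi(w \circ \Emb(n, \omega))| = 1$ (i.e.\ monochromaticity of the coloring on the image of postcomposition with $w$). First I would translate back from the categorical wording: since $n$ and $\omega$ carry the standard linear order, an embedding $f \in \Emb(n,\omega)$ is a strictly increasing map and is determined by its image, an $n$-subset of $\omega$; an element $w \in \Emb(\omega,\omega)$ is a strictly increasing map whose image $M := w(\omega)$ is an arbitrary infinite subset of $\omega$; and $w \circ \Emb(n,\omega)$ corresponds bijectively to $\binom{M}{n}$. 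So the theorem amounts to: every $k$-coloring of $\binom{\omega}{n}$ is constant on $\binom{M}{n}$ for some infinite $M \subseteq \omega$.

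The base case $n = 1$ is the pigeonhole principle: some color class $\chi^{-1}(c)$ is infinite and can be taken as $M$. For the inductive step, assuming the theorem for $n$, I would build by recursion a strictly increasing sequence $a_0 < a_1 < \cdots$ in $\omega$ and a decreasing chain of infinite sets $\omega = A_{-1} \supseteq A_0 \supseteq A_1 \supseteq \cdots$ together with colors $c_i \in k$, as follows. At stage $i$, set $a_i := \min A_{i-1}$, define an auxiliary $k$-coloring
$$
  \chi_i : \binom{A_{i-1} \setminus \{a_i\}}{n} \to k, \qquad \chi_i(S) := \chi(S \cup \{a_i\}),
$$
and invoke the induction hypothesis to obtain an infinite $A_i \subseteq A_{i-1} \setminus \{a_i\}$ on which $\chi_i$ is constant with value $c_i$.

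After the recursion, the sequence $(c_i)_{i < \omega}$ takes at most $k$ values, so by pigeonhole it has an infinite constant subsequence $c_{i_0} = c_{i_1} = \cdots = c$. Taking $M := \{a_{i_0} < a_{i_1} < \cdots\}$, any $(n{+}1)$-subset of $M$ has the form $\{a_{i_{j_0}}\} \cup S$ with $j_0 < j_1 < \cdots < j_n$ and $S = \{a_{i_{j_1}}, \ldots, a_{i_{j_n}}\} \subseteq A_{i_{j_0}}$, so its $\chi$-color is $\chi_{i_{j_0}}(S) = c_{i_{j_0}} = c$. The required $w \in \Emb(\omega,\omega)$ is then the unique strictly increasing enumeration of $M$.

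There is no genuine obstacle here beyond bookkeeping; the only point worth watching is that the recursive construction must keep each $a_i$ below every element of $A_i$, which is why I set $a_i = \min A_{i-1}$ and pass to $A_i \subseteq A_{i-1} \setminus \{a_i\}$. This is precisely the rigidity that the embedding formulation already builds in, so the categorical statement and the combinatorial one translate into each other with no loss.
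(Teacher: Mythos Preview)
Your argument is correct and is precisely the classical inductive proof of the Infinite Ramsey Theorem. Note, however, that the paper does not supply its own proof of this statement: Theorem~\ref{gfrd.thm.IRT} appears in the preliminaries as a cited classical result (attributed to Ramsey) and is used as background for the paper's later developments, so there is no proof in the paper to compare against.
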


\noindent
Its finite version is another famous result:

\begin{THM}[The Finite Ramsey Theorem~\cite{Ramsey}]\label{gfrd.thm.FRT}
  For all $\ell, m \in \NN$ and the number of colors $k \in \NN$ there is an $n \in \NN$ such that
  for every coloring $\chi : \Emb(\ell, n) \to k$ there is a $w \in \Emb(m, n)$ such that
  $|\chi(w \circ \Emb(\ell, m))| = 1$.
\end{THM}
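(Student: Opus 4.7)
The plan is to derive the finite version from the infinite one by the classical compactness argument. I would assume for contradiction that the conclusion fails for some $\ell, m, k \in \NN$, and then for each $n \ge m$ choose a \emph{bad} coloring $\chi_n : \Emb(\ell, n) \to k$, that is, one for which no $w \in \Emb(m, n)$ makes $\chi_n(w \circ \Emb(\ell, m))$ a singleton.

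First, I would observe that the inclusions $n \hookrightarrow n+1 \hookrightarrow \cdots \hookrightarrow \omega$ induce inclusions $\Emb(\ell, n) \subseteq \Emb(\ell, n+1) \subseteq \cdots \subseteq \Emb(\ell, \omega)$, so all of the bad colorings $\chi_n$ live inside a common ambient set. The key step is then to extract a single coloring $\chi : \Emb(\ell, \omega) \to k$ that agrees, on each finite piece $\Emb(\ell, n)$, with $\chi_{n'}$ for arbitrarily large $n'$. This is the only substantive step, and I expect it to be the main obstacle in the sense that it is the sole non-combinatorial ingredient and the precise place where compactness enters the argument. Two standard routes are available: one is to view each $\chi_n$ (arbitrarily extended) as a point of the compact Hausdorff space $k^{\Emb(\ell, \omega)}$ and pass, via a diagonal argument, to a pointwise convergent subsequence; the other is to apply K\"onig's Lemma to the finitely branching tree whose level-$n$ nodes are those colorings $\Emb(\ell, n) \to k$ that extend to $\chi_{n'}$ for infinitely many $n'$. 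Either route produces the desired $\chi$.

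Once $\chi$ is in hand, the bad property transfers to it: for every $n \ge m$ and every $w \in \Emb(m, n)$ the set $\chi(w \circ \Emb(\ell, m))$ has at least two colors, because it coincides with $\chi_{n'}(w \circ \Emb(\ell, m))$ for any sufficiently large $n'$, and $\chi_{n'}$ was bad. I would then apply Theorem~\ref{gfrd.thm.IRT} to $\chi$ to obtain a $w \in \Emb(\omega, \omega)$ such that $\chi$ is constant on $w \circ \Emb(\ell, \omega)$. Its restriction $\restr{w}{m}$ has finite image $\{w(0), \ldots, w(m-1)\}$, which is contained in some $n \in \NN$; hence $\restr{w}{m}$ may be regarded as an element of $\Emb(m, n)$, and $\chi$ is constant on $\restr{w}{m} \circ \Emb(\ell, m) \subseteq w \circ \Emb(\ell, \omega)$. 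This contradicts the bad behaviour of $\chi$ on $\Emb(\ell, n)$, and completes the argument.
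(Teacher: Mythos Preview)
Your argument is correct: it is the classical compactness derivation of the Finite Ramsey Theorem from the Infinite Ramsey Theorem, and every step you outline goes through as written.

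The paper, however, takes a genuinely different route. Rather than extracting a limit coloring from the bad family $(\chi_n)_n$ via K\"onig's Lemma or Tychonoff, the paper encodes the entire family into a \emph{single} coloring of $\Emb(\ell+1,\omega)$ in one stroke, using the approximability machinery of Theorem~\ref{gfrd.thm.COMPACTNESS-0} specialized as in Example~\ref{gfrd.ex.approximability-of-omega}: one adds a new top point ($F(A)=A\cup\{A\}$), and for $f:F(A)\hookrightarrow\omega$ the image of that top point records which finite $n$ to read, while $\Phi_A$ restricts $f$ back to $A$ with codomain $n$. The coloring $\gamma(f)=\chi_n(\Phi_A(f))$ is then attacked directly with the Infinite Ramsey Theorem (applied to $\ell+1$ rather than $\ell$), and the approximability identity $\Phi_A(u\cdot f')=\Phi_B(u)\cdot f$ replaces your restriction step. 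Your approach is the more elementary one and is essentially what the paper records as Theorem~\ref{akpt.lem->thm.ramseyF-part12}; its limitation, which the paper stresses, is that the extraction step relies on discreteness and does not survive in the dual setting, where colorings of $\RSurj(\omega,n)$ must be Borel. The paper's approximability argument, by contrast, needs no limit extraction and transfers verbatim to the dual case (Example following~\ref{gfrd.ex.approximability-of-omega}), which is precisely the uniformity the paper is after.
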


\noindent
Finite linear orders and rigid surjections exhibit another Ramsey-related phenomenon:

\begin{THM}[The Finite Dual Ramsey Theorem~\cite{GR}]\label{gfrd.thm.FDRT}
  For all $\ell, m \in \NN$ and the number of colors $k \in \NN$ there is an $n \in \NN$ such that
  for every coloring $\chi : \RSurj(n, \ell) \to k$ there is a $w \in \RSurj(n, m)$ such that
  $|\chi(\RSurj(m, \ell) \circ w)| = 1$.
\end{THM}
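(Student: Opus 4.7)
The plan is to reformulate the statement in the language of parameter words and then to reduce it to the Hales--Jewett theorem. Specializing the correspondence $u \mapsto f_u$ from the Preliminaries to the empty alphabet $A = \emptyset$ identifies $\RSurj(n, p)$ with the set $W^n_p(\emptyset)$ of $p$-parameter words of length $n$, and identifies composition of rigid surjections with substitution $u \cdot v$ of parameter words. Under this identification the theorem becomes the following special case of the Graham--Rothschild parameter-word theorem: for all $\ell, m, k$ there exists $n$ such that every coloring $\chi : W^n_\ell(\emptyset) \to k$ admits some $w \in W^n_m(\emptyset)$ for which the set $\{w \cdot v : v \in W^m_\ell(\emptyset)\}$ is $\chi$-monochromatic.

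I would then argue by induction on $\ell$. The base case $\ell = 1$ is immediate, since $W^p_1(\emptyset)$ is the singleton $\{x_1 x_1 \cdots x_1\}$; taking $n = m$ and $w = x_1 x_2 \cdots x_m$ makes the required set a singleton and hence trivially monochromatic. For the inductive step, assume the statement for $\ell - 1$ with arbitrary numbers of variables and colors, and fix $m$ and $k$. The idea is to combine two reductions: first, apply the Hales--Jewett theorem, treating $\{x_1, \ldots, x_{\ell-1}\}$ as an auxiliary alphabet, in order to ``absorb'' the role of the new variable $x_\ell$ along a single combinatorial line; second, apply the inductive hypothesis to the coloring induced on $(\ell-1)$-parameter words in order to compress the remaining variables into an $m$-parameter word. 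Composing the two parameter words yields the desired $w \in W^n_m(\emptyset)$.

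The main obstacle is the careful bookkeeping in the inductive step: one must choose the intermediate lengths and verify that the Hales--Jewett output is compatible with the inductive hypothesis so that the final parameter word is simultaneously monochromatic on every $w \cdot v$ with $v \in W^m_\ell(\emptyset)$. This is the heart of the classical Graham--Rothschild argument \cite{GR}, which we do not reproduce here.
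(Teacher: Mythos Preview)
Your proposal is correct in outline but takes a genuinely different route from what the paper does. In the paper the Finite Dual Ramsey Theorem is first quoted from~\cite{GR} in the Preliminaries, and the only argument supplied is Theorem~\ref{gfrd.thm.IDRT=>FDRT}, which \emph{derives} it from the Infinite Dual Ramsey Theorem of Carlson--Simpson. That derivation is top-down: one lifts a putative bad family of finite colorings $\chi_n$ to a single Borel coloring $\gamma$ of $\RSurj(\omega, r+1)$ via the truncation maps $\phi_r$, applies the infinite theorem, and then pushes the monochromatic witness back down using the one-point extensions $f \mapsto f'$ and the key identity $f \circ \phi_m(u) = \phi_r(f' \circ u)$. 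This is precisely the ``approximability'' mechanism that the paper is set up to isolate.

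Your approach, by contrast, is the original bottom-up finitary proof: translate to parameter words over the empty alphabet and run the Graham--Rothschild induction on $\ell$ with Hales--Jewett as the engine. This is a legitimate and self-contained route that avoids any appeal to the Carlson--Simpson theorem or to Borel colorings, at the cost of reproducing nontrivial combinatorics that the paper simply imports. Conversely, the paper's route buys a uniform template (approximability) that simultaneously handles the direct and the dual compactness arguments, which is the whole point of Section~\ref{gfrd.sec.compactness}.

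One caveat: as you yourself note, the inductive step is where all the work lies, and your sketch (``apply Hales--Jewett to absorb $x_\ell$, then apply the inductive hypothesis, then compose'') is too coarse to stand as a proof. The actual Graham--Rothschild induction requires iterating Hales--Jewett and carefully threading the intermediate lengths so that the combinatorial line found at each stage is compatible with the next; simply citing~\cite{GR} for this step means your proposal is really a reduction-plus-reference rather than a proof. That is fine as a strategy, but it should be flagged as such.
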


Finite Dual Ramsey Theorem has an infinite version due to Carlson and Simpson~\cite{carlson-simpson-1984}.
However, the formulation of the Infinite Dual Ramsey Theorem relies on some topological infrastructure.
For a pair of well-ordered sets $(A, \Boxed<)$ and $(B, \Boxed<)$ the set $\RSurj(A, B)$ is endowed
with the pointwise convergence topology, and with respect to that topology we say that
a coloring $\chi : \RSurj(A, B) \to k$ is \emph{Borel} if $\chi^{-1}(j)$ is a Borel set for all $j < k$.

\begin{THM}[The Infinite Dual Ramsey Theorem~\cite{carlson-simpson-1984}]\label{gfrd.thm.IDRT}
  For every $n \in \NN$, every $k \in \NN$ and every Borel coloring $\chi : \RSurj(\omega, n) \to k$
  there is a $w \in \RSurj(\omega, \omega)$ such that $|\chi(\RSurj(\omega, n) \circ w)| = 1$.
\end{THM}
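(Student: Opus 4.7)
The plan is to translate the statement into the language of parameter words and reduce it to an infinite-dimensional Graham--Rothschild type theorem in the Carlson--Simpson style. A rigid surjection $f : \omega \to n$ can be encoded as an infinite $n$-parameter word over the empty alphabet by regarding the value $f(j) \in \{0, 1, \ldots, n-1\}$ as the variable $x_{f(j)+1}$; the rigidity condition translates precisely into the requirement that the first occurrences of $x_1, \ldots, x_n$ appear in that order. Under this bijection $\RSurj(\omega, n)$ becomes $W^\omega_n(\varnothing)$, $\RSurj(\omega, \omega)$ becomes $W^\omega_\omega(\varnothing)$, and the composition $g \circ w$ of rigid surjections corresponds to the parameter-word substitution $w \cdot g$. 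The theorem therefore rephrases as: for every Borel coloring of $W^\omega_n(\varnothing)$ there exists $w \in W^\omega_\omega(\varnothing)$ such that $\{w \cdot g : g \in W^\omega_n(\varnothing)\}$ is monochromatic.

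I would first dispatch the case of \emph{clopen} colorings by a direct fusion. Build a decreasing sequence of workspaces $w_0, w_1, \ldots \in W^\omega_\omega(\varnothing)$, where $w_{i+1}$ is obtained from $w_i$ by substituting a fresh infinite parameter word into the tail of $w_i$ beyond position~$i$. At stage $i$ the Finite Dual Ramsey Theorem (Theorem~\ref{gfrd.thm.FDRT}), equivalently the finite Graham--Rothschild theorem, is used to stabilise the color of every possible $n$-parameter completion of the current finite prefix within $w_{i+1}$. The diagonal limit $w = \lim_i w_i$ then has the property that $\{w \cdot g : g \in W^\omega_n(\varnothing)\}$ is monochromatic.

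To pass from clopen to Borel I would deploy an Ellentuck-style argument. Basic neighbourhoods in $W^\omega_n(\varnothing)$ are parameterised by pairs $(\sigma, w')$, where $\sigma$ is a finite prefix and $w' \in W^\omega_\omega(\varnothing)$ is a workspace, and consist of all words of the form $w' \cdot g$ starting with $\sigma$. Call $X \subseteq W^\omega_n(\varnothing)$ \emph{completely Ramsey} if every such neighbourhood admits a further refinement $w'' \in W^\omega_\omega(\varnothing)$ of $w'$ whose corresponding neighbourhood is either contained in $X$ or disjoint from it. The clopen step above shows that basic open sets in the product topology are completely Ramsey; closure under complements is immediate, and closure under countable unions follows by a diagonal fusion in the style of Galvin--Prikry. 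Hence every Borel subset of $W^\omega_n(\varnothing)$ is completely Ramsey, and applying this in turn to $\chi^{-1}(0), \ldots, \chi^{-1}(k-1)$ yields the required~$w$.

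The principal obstacle is extending the argument from clopen to the full Borel hierarchy: the diagonal fusion behind closure under countable unions must be orchestrated so that the limit workspace really lies inside every refinement produced along the way, and one must verify that the parameter-word substitution operation interacts well with this limiting procedure. Once this Ellentuck-style machinery is in place, the compactness-style core at each finite stage reduces to the Finite Dual Ramsey Theorem, in complete analogy with the Galvin--Prikry--Ellentuck treatment of Borel colorings of $[\omega]^\omega$.
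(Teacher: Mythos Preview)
The paper does not prove Theorem~\ref{gfrd.thm.IDRT}; it is quoted in the Preliminaries as a result of Carlson and Simpson~\cite{carlson-simpson-1984} and used later as a black box (e.g.\ in the proof of Theorem~\ref{gfrd.thm.IDRT=>FDRT}). There is therefore no ``paper's own proof'' to compare against.

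As to your outline itself: the translation between $\RSurj(\omega,n)$ and $W^\omega_n(\varnothing)$ is correct and is exactly the dictionary the paper records in the Preliminaries. Your two-stage plan (clopen case by a fusion using Graham--Rothschild at each finite stage, then an Ellentuck-style extension to Borel) is a legitimate route and is in the spirit of how such infinite-dimensional Ramsey theorems are proved. Two points deserve care if you flesh this out. First, in the clopen step you must argue that the continuity of $\chi$ lets you reduce, at stage $i$, to a genuinely \emph{finite} coloring problem to which the Finite Dual Ramsey Theorem applies; the bookkeeping of which finite prefixes are long enough to determine the color is where the actual work lies. Second, your Ellentuck step defines neighbourhoods in $W^\omega_n(\varnothing)$ indexed by pairs $(\sigma,w')$ with $w'\in W^\omega_\omega(\varnothing)$; to conclude for \emph{product-topology} Borel sets you need the (true but nontrivial) fact that such sets have the Baire property in this finer Ellentuck-type topology. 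This is indeed how Carlson--Simpson proceed, but it is the crux of the matter and your sketch treats it as routine.
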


Generalizing the Finite Ramsey Theorem from linear orders to arbitrary relational structures, the structural
Ramsey theory originated at the beginning of 1970’s in a series of papers (see \cite{N1995} for references). 
Many natural classes of structures such as finite graphs and finite posets
do not have the Ramsey property. Nevertheless, many of these classes enjoy
the weaker property of having finite small Ramsey degrees. Over the years several kinds of Ramsey degrees
have been identified.

Let $L$ be a relational language and $\KK$ a class of finite $L$-structures.
A \emph{small embedding Ramsey degree of $\calA \in \KK$} is the least positive integer $n \in \NN$, if such an integer exists,
with the property that for every $\calB  \in \KK$ and every $k \in \NN$ there is a $\calC \in \KK$ such that for every coloring
$\chi : \Emb(\calA, \calC) \to k$ one can find a $w \in \Emb(\calB, \calC)$ satisfying $|\chi(w \circ \Emb(\calA, \calB))| \le n$.
We then write $t(\calA) = n$. If no such $n \in \NN$ exists we write $t(\calA) = \infty$. A class $\KK$ of finite $L$-structures
\emph{has finite small embedding Ramsey degrees} if $t(\calA) < \infty$ for all $\calA \in \KK$, and it \emph{has the embedding Ramsey property} if
$t(\calA) = 1$ for all $\calA \in \KK$.

A \emph{small structural Ramsey degree of $\calA \in \KK$} is the least positive integer $n \in \NN$, if such an integer exists,
with the property that for every $\calB  \in \KK$ and every $k \in \NN$ there is a $\calC \in \KK$ such that for every coloring
$\chi : \subobj{}\calA\calC \to k$ one can find a $w \in \Emb(\calB, \calC)$ satisfying $|\chi(w \circ \subobj{} \calA \calB)| \le n$.
We then write $\tilde t(\calA) = n$. If no such $n \in \NN$ exists we write $\tilde t(\calA) = \infty$. A class $\KK$ of finite $L$-structures
\emph{has finite small structural Ramsey degrees} if $\tilde t(\calA) < \infty$ for all $\calA \in \KK$, and it \emph{has the structural Ramsey property} if
$\tilde t(\calA) = 1$ for all $\calA \in \KK$.

It was shown in~\cite{zucker1} that the two kinds of small Ramsey degrees are closely related: for a finite relational structure $\calA$
coming from a class $\KK$ of finite relational structures we have that $t(\calA) = |\Aut(\calA)| \cdot \tilde t(\calA)$
(see~\cite{masul-kpt} for a slight generalization).

Big Ramsey degrees were first introduced in the context of structural Ramsey theory in~\cite{KPT}.
Let $\calA$ and $\calS$ be $L$-structures such that $\calA$ is finite.
A \emph{big embedding Ramsey degree of $\calA$  in $\calS$} is the least positive integer $n \in \NN$, if such an integer exists,
with the property that for every $k \in \NN$ and every coloring
$\chi : \Emb(\calA, \calS) \to k$ one can find a $w \in \Emb(\calS, \calS)$ satisfying $|\chi(w \circ \Emb(\calA, \calS))| \le n$.
We then write $T(\calA, \calS) = n$. If no such $n \in \NN$ exists we write $T(\calA, \calS) = \infty$. We say that
\emph{$\calS$ has finite big embedding Ramsey degrees} if $T(\calA, \calS) < \infty$ for all $\calA \in \Age(\calS)$.

A \emph{big structural Ramsey degree of $\calA$  in $\calS$} is the least positive integer $n \in \NN$, if such an integer exists,
with the property that for every $k \in \NN$ and every coloring
$\chi : \subobj{} \calA \calS \to k$ one can find a $w \in \Emb(\calS, \calS)$ satisfying $|\chi(w \circ \subobj{} \calA \calS)| \le n$.
We then write $\tilde T(\calA, \calS) = n$. If no such $n \in \NN$ exists we write $\tilde T(\calA, \calS) = \infty$. We say that
\emph{$\calS$ has finite big structural Ramsey degrees} if $\tilde T(\calA, \calS) < \infty$ for all $\calA \in \Age(\calS)$.

It comes as no surprise that the two kinds of big Ramsey degrees are also closely related:
it was shown in~\cite{Zucker-2} that $T(\calA, \calS) = |\Aut(\calA)| \cdot \tilde T(\calA, \calS)$
for a finite relational structure $\calA$ that embeds into a relational structure $\calS$ (see~\cite{masul-kpt} for a slight generalization).

\begin{EX}
  $(a)$
  The order of the rationals, $\QQ$, has finite big Ramsey degrees~\cite{galvin1,galvin2}.
  The exact values of $T(n, \QQ)$, where $n \in \NN$, were computed in~\cite{devlin}.

  $(b)$
  It follows from Sauer's results in \cite{Sauer-2006} that the Rado graph $\calR$, the random tournament $\calT$ and the random digraph $\calD$ have finite big Ramsey degrees.

  $(c)$
  Dobrinen proved in \cite{dobrinen2} that the Henson graphs $\calH_n$ have finite big Ramsey degrees for every $n \ge 3$.

  $(d)$
  Hubi\v cka proved in \cite{hubicka-param-spaces} that the random poset $\calP$ has finite big Ramsey degrees,
  and inferred from that using a construction from~\cite{masul-preadj}
  that the random metric space $\calM(\Delta)$ has finite big Ramsey degrees
  for certain choices of $\Delta \subseteq \RR$ (see \cite{hubicka-param-spaces} for details).
  In particular, $\calM(S)$ has finite big Ramsey degrees for every finite $S \subseteq \RR$ for which $\calM(S)$ exists~\cite{hubicka-forb-cyc}.
  Interestingly, neither $\calM(\QQ)$ nor $\calM(\NN)$ have finite big Ramsey degrees~\cite{hubicka-personal}.

  $(e)$
  Zucker proved in \cite{zucker-brd-bin-free-amal-classes} that given a finite binary relational language and a
  finite set of ``forbidden'' finite irreducible structures, the \Fraisse\ limit of the
  class of all finite structures that embed none of the forbidden finite structures has finite big Ramsey degrees.
\end{EX}

\begin{CONV}\label{crt.CONVENTION.Ninf}
  Let $\NN_\infty = \NN \union \{\infty\} = \{1, 2, 3, \ldots, \infty \}$.
  The usual linear order on the positive integers extends to $\NN_\infty$ straightforwardly:
  $
    1 < 2 < \ldots < \infty
  $.
    Addition and multiplication also extend to $\NN_\infty$ straightforwardly:
  we take $\infty + n = n + \infty = \infty + \infty = \infty$ and
  $\infty \cdot n = n \cdot \infty = \infty \cdot \infty = \infty$.
  Ramsey degrees take their values in $\NN_\infty$. Therefore, if $t$ is a Ramsey degree
  and $A$ is a finite or countably infinite set, by $t \ge |A|$ we mean the following:
    $t \in \NN$, $|A| \in \NN$ and $t \ge |A|$; or
    $t = \infty$ and $|A| \in \NN$; or
    $A$ is a countably infinite set and $t = \infty$.
  If $A$ and $B$ are sets then $|A| \ge |B|$ has the usual meaning.
\end{CONV}

\paragraph{Categories.}
In order to specify a \emph{category} $\CC$ one has to specify
a class of objects $\Ob(\CC)$, a class of morphisms $\Mor(\CC)$,
functions $\dom, \cod : \Mor(\CC) \to \Ob(\CC)$,
the identity morphism $\id_A$ for all $A \in \Ob(\CC)$ such that $\dom(\id_A) = \cod(\id_A) = A$, and
the composition of morphisms~$\cdot$~so that
$\id_B \cdot f = f = f \cdot \id_A$ for all $f \in \Mor(\CC)$ with $\dom(f) = A$ and $\cod(f) = B$,
and $(f \cdot g) \cdot h = f \cdot (g \cdot h)$ whenever $\cod(h) = \dom(g)$ and $\cod(g) = \dom(f)$.
For $A, B \in \Ob(\CC)$ by $\hom_\CC(A, B)$ we denote the class of all $f \in \Mor(\CC)$ such that $\dom(f) = A$ and $\cod(f) = B$.
As usual, we assume that $\hom_\CC(A, B)$ and $\hom_\CC(C, D)$ are disjoint whenever $(A, B) \ne (C, D)$.
A category $\CC$ is \emph{locally small} if $\hom_\CC(A, B)$ is a set for all $A, B \in \Ob(\CC)$.
Sets of the form $\hom_\CC(A, B)$ are then referred to as \emph{hom-sets}.
Write $A \to B$ if $\hom_\CC(A, B) \ne \0$. 
A locally small category $\CC$ is \emph{small} if $\Ob(\CC)$ is a set.

\begin{EX}
  Every class of first-order structures can be understood as a locally small category whose morphisms are embeddings
  of first-order structures. This is the intended interpretation whenever a class of first-order structures is treated as a category
  and the morphisms are not specified. In particular, let $\ChEmb$ denote the category whose objects are all linear orders
  and morphisms are embeddings.
\end{EX}

\begin{EX}
  By $\Top$ we denote the category of all topological spaces and continuous maps between them.
\end{EX}

\begin{EX}\label{p-a-t.ex.3}
  Let $\WchRs$ denote the category whose objects are well-ordered sets and morphisms are rigid surjections.
\end{EX}

A category $\CC$ is \emph{directed} if for every $A, B \in \Ob(\CC)$ there is a $C \in \Ob(\CC)$ such that $A \to C$ and $B \to C$,
and it \emph{has amalgamation} if for all $A, B_1, B_2 \in \Ob(\CC)$ and morphisms $f_1 \in \hom_\CC(A, B_1)$, $f_2 \in \hom_\CC(A, B_2)$
there is a $C \in \Ob(\CC)$ and morphisms $g_1 \in \hom_\CC(B_1, C)$, $g_2 \in \hom_\CC(B_2, C)$ such that $g_1 \cdot f_1 = g_2 \cdot f_2$.

A morphism $f$ is: \emph{mono} or \emph{left cancellable} if
$f \cdot g = f \cdot h$ implies $g = h$ whenever the compositions make sense;
\emph{epi} or \emph{right cancellable} if
$g \cdot f = h \cdot f$ implies $g = h$ whenever the compositions make sense; and
\emph{invertible} if there is a morphism $g$ with the appropriate domain and codomain
such that $g \cdot f = \id$ and $f \cdot g = \id$.
By $\iso_\CC(A, B)$ we denote the set of all invertible
morphisms $A \to B$, and we write $A \cong B$ if $\iso_\CC(A, B) \ne \0$. Let $\Aut_\CC(A) = \iso_\CC(A, A)$.
An object $A \in \Ob(\CC)$ is \emph{rigid} if $\Aut_\CC(A) = \{\id_A\}$.

Given a category $\CC$, the \emph{opposite category} $\CC^\op$ is a category constructed from $\CC$ on the same class of objects
by formally reversing arrows and composition. More precisely, for $A, B \in \Ob(\CC) = \Ob(\CC^\op)$ we have that
$\hom_{\CC^\op}(A, B) = \hom_{\CC}(B, A)$, and for $f \in \hom_{\CC^\op}(A, B)$ and $g \in \hom_{\CC^\op}(B, C)$
we have that $g \mathbin{{\cdot}_{\CC^\op}} f = f \mathbin{{\cdot}_{\CC}} g$.

A category $\DD$ is a \emph{subcategory} of a category $\CC$ if $\Ob(\DD) \subseteq \Ob(\CC)$ and
$\hom_\DD(A, B) \subseteq \hom_\CC(A, B)$ for all $A, B \in \Ob(\DD)$.
A category $\DD$ is a \emph{full subcategory} of a category $\CC$ if $\Ob(\DD) \subseteq \Ob(\CC)$ and
$\hom_\DD(A, B) = \hom_\CC(A, B)$ for all $A, B \in \Ob(\DD)$. If $\DD$ is a full subcategory of $\CC$ we also
say that $\DD$ is \emph{a subcategory of $\CC$ spanned by} $\Ob(\DD)$.

\begin{EX}
  Let $\ChEmb^\fin$ denote the full subcategory of $\ChEmb$ spanned by all finite linear orders;
  let $\WchRs^\fin$ denote the full subcategory of $\WchRs$ spanned by all finite linear orders;
  let $\Haus$ denote the full subcategory of $\Top$ spanned by Hausdorff spaces.
\end{EX}

A \emph{skeleton of $\CC$} is a full subcategory $\SS$ of $\CC$ such that every object of $\CC$ is isomorphic
to some object in $\SS$, and no two objects of $\SS$ are isomorphic. In other words,
$\SS$ contains exactly one representative of each isomorphism class of objects in~$\CC$.
A category $\CC$ is skeletal if it equals its skeleton, that is, if there are no distinct isomorphic objects in~$\CC$.

We shall say that a category $\CC$ is a \emph{category of finite objects} if
$\CC$ is a locally small directed category whose morphisms are mono,
the skeleton $\SS$ of $\CC$ has at most countably many objects, and
for every $S \in \Ob(\SS)$ there are only finitely many morphisms in $\SS$ whose codomain is $S$.
In particular, in a category of finite objects every homset is a finite set of monos.

A \emph{functor} $F : \CC \to \DD$ from a category $\CC$ to a category $\DD$ maps $\Ob(\CC)$ to
$\Ob(\DD)$ and maps morphisms of $\CC$ to morphisms of $\DD$ so that
$F(f) \in \hom_\DD(F(A), F(B))$ whenever $f \in \hom_\CC(A, B)$, $F(f \cdot g) = F(f) \cdot F(g)$ whenever
$f \cdot g$ is defined, and $F(\id_A) = \id_{F(A)}$.
A functor $F : \CC \to \CC$ such that $F(A) = A$ and $F(f) = f$ for all objects $A$ and morphisms $f$
is called the \emph{identity functor} and denoted by~$\ID_\CC$.
Categories $\CC$ and $\DD$ are \emph{isomorphic}, in symbols $\CC \cong \DD$, if there exist
functors $F : \CC \to \DD$ and $G : \DD \to \CC$ such that $G \circ F = \ID_\CC$ and $F \circ G = \ID_\DD$.


A functor $F : \CC \to \DD$ is \emph{full} if it is surjective on homsets (that is: for
every $g \in \hom_\DD(F(A), F(B))$ there is an $f \in \hom_\CC(A, B)$ with $F(f) = g$),
and \emph{faithful} if it is injective on homsets (that is: $F(f) = F(g)$ implies $f = g$).
A functor $F : \CC \to \DD$ is \emph{isomorphism-dense} if for every $D \in \Ob(\DD)$ there is
a $C \in \Ob(\CC)$ such that $F(C) \cong D$. A functor $F : \CC \to \DD$ is an \emph{equivalence}
if it is full, faithful and isomorphism-dense. Categories $\CC$ and $\DD$ are \emph{equivalent}
if there is an equivalence $F : \CC \to \DD$.

\section{Small Ramsey degrees}
\label{crt.sec.general-def}

We shall now present another, more general point of view of small Ramsey degrees
whose special cases are both structural and embedding Ramsey degrees of an object in a category.

Let $\CC$ be a locally small category and let $\frakG = (G_A)_{A \in \Ob(\CC)}$
be a sequence of groups indexed by the objects of the category chosen so that
$G_A \le \Aut(A)$ for all $A \in \Ob(\CC)$. For $f, g \in \hom(A, B)$
write $f \sim_\frakG g$ to denote that there is an $\alpha \in G_A$ such that $f = g \cdot \alpha$.
It is easy to see that $\sim_\frakG$ is an equivalence relation on $\hom(A, B)$, so we let
$$\textstyle
  \subobj\frakG A B = \hom(A, B) / \Boxed{\sim_\frakG} = \{f \cdot G_A : f \in \hom(A, B)\}.
$$
For integers $k, t \in \NN$ and $A, B, C \in \Ob(\CC)$ such that $A \to B \to C$
we write
$$
  C \overset\frakG\longrightarrow (B)^{A}_{k, t}
$$
to denote that for every $k$-coloring
$
  \chi : \subobj\frakG AC \to k
$
there is a morphism $w : B \to C$ such that $|\chi(w \cdot \subobj\frakG AB)| \le t$.
For $A \in \Ob(\CC)$ let $t^\frakG_\CC(A)$, the \emph{small $\frakG$-Ramsey degree of $A$ in $\CC$},
denote the least positive integer $n$ such that
for all $k \in \NN$ and all $B \in \Ob(\CC)$ there exists a $C \in \Ob(\CC)$ such that
$C \overset\frakG\longrightarrow (B)^{A}_{k, n}$, if such an integer exists.
Otherwise put $t^\frakG_\CC(A) = \infty$.

It is easy to see that in the two extreme cases $\frakG$-Ramsey degrees reduce to embedding
Ramsey degrees and structural Ramsey degrees. Namely, let $\frakA = (\Aut(A))_{A \in \Ob(\CC)}$
and $\frakI = (\{\id_A\})_{A \in \Ob(\CC)}$. Then, for all $A \in \Ob(\CC)$,
$t^\frakA_\CC(A)$ is usually referred to as the \emph{small structural Ramsey degree of $A$ in $\CC$}, and
$t^\frakI_\CC(A)$ is usually referred to as the \emph{small embedding Ramsey degree of $A$ in $\CC$}.
This is because $\subobj{\frakA}AB = \hom(A, B) / \Boxed{\sim_{\Aut(A)}}$ and, with a slight abuse of notation, $\subobj{\frakI}AB = \hom(A, B)$.
On the other hand, small dual Ramsey degrees are nothing but ``direct'' Ramsey degrees
in the opposite category. Therefore, the infrastructure provided by a locally small category
together with a sequence of distinguished automorphism groups
$\frakG = (G_A)_{A \in \Ob(\CC)}$ suffices for the uniform treatment of all four kinds
of small Ramsey degrees, Table~\ref{gfrd.fig.taxonomy-small}.
Small embedding Ramsey degrees will be denoted by $t_\CC(A)$, and we let $t^\partial_\CC(A) = t_{\CC^\op}(A)$ denote the small dual
embedding Ramsey degrees. We shall omit the index $\CC$ whenever there is no possibility of confusion.

\begin{table}
  \centering
  \begin{tabular}{cccc}
    \cmidrule[\heavyrulewidth]{1-3}
    Small Ramsey  & \multirow{2}{*}{structural} & \multirow{2}{*}{embedding} \\
    degrees &  &  \\
    \cmidrule{1-3}
    ``direct'' & $t^\frakA_\CC(A)$ & $t^\frakI_\CC(A) = t_\CC(A)$ \\
    \cmidrule{1-3}
    dual   & $t^\frakA_{\CC^\op}(A)$ & $t^\frakI_{\CC^\op}(A) = t^\partial_\CC(A)$ \\
    \cmidrule[\heavyrulewidth]{1-3}
  \end{tabular}
  \caption{Small Ramsey degrees are instances of the same phenomenon}
  \label{gfrd.fig.taxonomy-small}
\end{table}

We shall often implicitly rely on the following simple observation:

\begin{LEM}
  Let $\CC$ be a locally small category and let $\frakG = (G_A)_{A \in \Ob(\CC)}$ be a sequence of groups
  indexed by the objects of the category chosen so that $G_A \le \Aut(A)$ for all $A \in \Ob(\CC)$.
  Let $A \in \Ob(\CC)$  and $t \in \NN$ be arbitrary. Then
  $f \cdot (x / \Boxed{\sim_\frakG}) = (f \cdot x) / \Boxed{\sim_\frakG}$ for all $x \in \hom(A, B)$ and $f \in \hom(B, C)$.
\end{LEM}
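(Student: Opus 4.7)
The plan is to unfold the definition of $\sim_\frakG$ and invoke the associativity axiom of a category; there really is no content beyond that. First I would note that by the definition in the paragraph preceding the lemma, $\sim_\frakG$ on $\hom(A,B)$ is exactly the orbit equivalence of the right action of $G_A \le \Aut(A)$ on $\hom(A,B)$ by precomposition, so
$$
  x / \Boxed{\sim_\frakG} = \{x \cdot \alpha : \alpha \in G_A\} \subseteq \hom(A,B).
$$
Since $f \in \hom(B,C)$, the composite $f \cdot x$ lies in $\hom(A,C)$, and the same formula (now using the $G_A$-action on $\hom(A,C)$ by precomposition) gives $(f \cdot x) / \Boxed{\sim_\frakG} = \{(f \cdot x) \cdot \alpha : \alpha \in G_A\} \subseteq \hom(A,C)$. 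Thus both sides of the asserted equality are subsets of $\hom(A,C)$, so the comparison is at least well-posed.

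The computation is then a one-liner: applying the definition of pointwise left-composition $f \cdot S := \{f \cdot s : s \in S\}$ and then associativity,
$$
  f \cdot (x / \Boxed{\sim_\frakG}) \;=\; \{f \cdot (x \cdot \alpha) : \alpha \in G_A\} \;=\; \{(f \cdot x) \cdot \alpha : \alpha \in G_A\} \;=\; (f \cdot x) / \Boxed{\sim_\frakG}.
$$

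I expect no obstacle. Conceptually the content of the lemma is precisely that the $G_A$-action acts on the $A$-side of homsets and therefore commutes with postcomposition by morphisms out of $B$; once one has checked that both sides live in the same homset, associativity finishes the job. The parameter $t \in \NN$ in the statement does not appear to play any role in the conclusion and can be ignored.
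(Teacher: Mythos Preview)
Your proof is correct; it is precisely the unfolding of definitions plus associativity that the statement calls for. The paper itself does not supply a proof for this lemma at all --- it is introduced as a ``simple observation'' and left to the reader --- so your argument is the expected one. Your remark that the parameter $t \in \NN$ plays no role is also accurate; it appears to be a vestigial hypothesis.
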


We say that a locally small category $\CC$ has the \emph{$\frakG$-Ramsey property} if $t^\frakG_\CC(A) = 1$ for all $A \in \Ob(\CC)$.
The two extreme cases are usually referred to as the \emph{embedding Ramsey property} for $\frakG = \frakI$,
and the \emph{structural Ramsey property} for $\frakG = \frakA$.

\begin{PROP}
  Let $\CC$ be a locally small category whose morphisms are mono.
  If $\CC$ has the $\frakG$-Ramsey property then $\Aut_\CC(A) = G_A$ for all $A \in \Ob(\CC)$.
  In particular, if $\CC$ has the embedding Ramsey property all the objects in $\CC$ are rigid.
\end{PROP}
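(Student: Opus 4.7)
The plan is to argue by contradiction: suppose there is some $\alpha \in \Aut_\CC(A) \setminus G_A$ and produce a $2$-colouring refuting the $\frakG$-Ramsey instance with $B = A$ and $k = 2$. If $\CC$ has the $\frakG$-Ramsey property, this instance yields a $C \in \Ob(\CC)$ with $C \overset\frakG\longrightarrow (A)^A_{2,1}$, so exhibiting such a colouring for every candidate $C$ forces the required contradiction.

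The key observation is that because every morphism in $\CC$ is mono, the right action of $\Aut_\CC(A)$ on $\hom_\CC(A,C)$ defined by $(w, \beta) \mapsto w\beta$ is free. Consequently, on each orbit $w\Aut_\CC(A)$ the partition into $\sim_\frakG$-classes mirrors the partition of $\Aut_\CC(A)$ into the cosets $\beta G_A$: indeed $w\beta \sim_\frakG w\gamma$ iff $w\beta = w\gamma\delta$ for some $\delta \in G_A$, which by left-cancellation of the mono $w$ is equivalent to $\beta \in \gamma G_A$. Since $\alpha \notin G_A$, we have $[\Aut_\CC(A) : G_A] \ge 2$, so each such orbit contributes at least two distinct classes to $\subobj\frakG A C$.

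Next I would fix a transversal $T \subseteq \hom_\CC(A,C)$ for the $\Aut_\CC(A)$-action, obtaining the disjoint decomposition $\subobj\frakG A C = \bigsqcup_{w \in T} (w\Aut_\CC(A)/\sim_\frakG)$, and define $\chi$ by colouring each slice $w\Aut_\CC(A)/\sim_\frakG$ non-monochromatically (for instance $\chi(w/\sim_\frakG) = 0$ and $\chi(w\beta/\sim_\frakG) = 1$ whenever $\beta \notin G_A$). To see this defeats $C \overset\frakG\longrightarrow (A)^A_{2,1}$, take any $w' : A \to C$ and pick $w \in T$ in its orbit; then $w'\Aut_\CC(A) = w\Aut_\CC(A)$, so $w' \cdot \subobj\frakG A A$ contains the entire slice $w\Aut_\CC(A)/\sim_\frakG$, which uses both colours. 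Hence $|\chi(w' \cdot \subobj\frakG A A)| \ge 2$ for every $w'$, contradicting the Ramsey arrow. The ``in particular'' sentence is immediate by specialising $G_A = \{\id_A\}$.

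The main subtlety to watch is that $G_A$ need not be normal in $\Aut_\CC(A)$, so there is no well-defined ``right-translation by $\alpha$'' map on $\subobj\frakG A C$; the device above sidesteps this by colouring $\Aut_\CC(A)$-orbits independently via a transversal, which only uses that each such orbit carries at least two $\sim_\frakG$-classes. Non-invertible endomorphisms in $\hom_\CC(A,A)$, should they exist, also cause no trouble: the invertible part of $w' \cdot \subobj\frakG A A$ already witnesses both colours, and the full hyperedge inherits that.
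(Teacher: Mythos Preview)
Your proof is correct and follows essentially the same strategy as the paper's: assume $\alpha\in\Aut_\CC(A)\setminus G_A$, partition $\hom_\CC(A,C)$ into orbits of a group containing both $G_A$ and $\alpha$, use that all morphisms are mono to get a free action so that each orbit splits into at least two $\sim_\frakG$-classes, and colour so that every orbit sees both colours. The only cosmetic difference is that the paper works with the subgroup $H=\langle\alpha\rangle\cup G_A\rangle$ rather than the full $\Aut_\CC(A)$, and colours just the two distinguished classes $f_i\cdot G_A$ and $f_i\cdot\alpha\cdot G_A$ (arbitrarily elsewhere); your choice of the full automorphism group and the ``identity coset versus the rest'' colouring is arguably cleaner and no less rigorous.
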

\begin{proof}
  Assume that there is an $A \in \Ob(\CC)$ such that $\Aut_\CC(A) \ne G_A$ and let us show that
  for every $C \in \Ob(\CC)$ it is \emph{not} the case that $C \overset\frakG\longrightarrow (A)^A_2$,
  showing, thus, that $\CC$ does not have the $\frakG$-Ramsey property.

  Let $C \in \Ob(\CC)$ be any object of $\CC$ such that $\hom(A, C) \ne \0$
  and take any $\alpha \in \Aut_\CC(A) \setminus G_A$.
  The idea is now simple: construct a coloring $\subobj{\frakG}AC \to 2$ which
  colors classes of the form $f \cdot G_A$ by~0, and classes of the form $f \cdot \alpha \cdot G_A$ by~1.
  However, a bit of care is needed to provide a correct definition.
  
  Let $H$ be the subgroup of $\Aut_\CC(A)$ generated by $\{\alpha\} \cup G_A$.
  Let $H$ act on $\hom_\CC(A, C)$ by $f^h = f \cdot h$
  where $h \in H$ and $f \in \hom_\CC(A, C)$. Let $\{\calO_i : i \in I\}$ be the
  set of orbits in this action, and for each orbit $\calO_i$ choose a representative $f_i \in \calO_i$
  so that $\calO_i = f_i \cdot H$, $i \in I$. Each orbit $\calO_i$ now contains both the class
  $f_i \cdot G_A$ and the class $f_i \cdot \alpha \cdot G_A$ and it is easy to check that
  the two classes are disjoint. (This follows from the fact that $f_i$ is mono and that $\alpha \notin G_A$.)
  
  Consider the coloring $\chi : \subobj\frakG AC \to 2$ defined so that $\chi(f_i \cdot G_A) = 0$
  and $\chi(f_i \cdot \alpha \cdot G_A) = 1$ for each $i \in I$. For other classes of $\sim_\frakG$ define
  $\chi$ arbitrarily. Take any $w \in \hom(A, C)$ and let us show that $|\chi(w \cdot \subobj\frakG AA)| = 2$.
  From $\hom(A, A) \supseteq H$ it follows that
  $$\textstyle
    w \cdot \subobj\frakG AA = w \cdot \hom(A, A)/\Boxed{\sim_\frakG} \supseteq w \cdot H/\Boxed{\sim_\frakG} = f_{i_0} \cdot H/\Boxed{\sim_\frakG},
  $$
  where $i_0 \in I$ is chosen so that $w \cdot H = \calO_{i_0} = f_{i_0} \cdot H$.
  Since both $f_{i_0} \cdot G_A \in f_{i_0} \cdot H/\Boxed{\sim_\frakG}$ and
  $f_{i_0} \cdot \alpha \cdot G_A \in f_{i_0} \cdot H/\Boxed{\sim_\frakG}$, the definition
  ensures that $\chi$ assumes both colors on $w \cdot \subobj\frakG AA$. Therefore,
  $|\chi(w \cdot \subobj\frakG AA)| = 2$.
\end{proof}

\begin{PROP}\label{gfrd.prop.small-degs} (cf.~\cite{zucker1,masul-kpt})
  Let $\CC$ be a locally small category such that all the morphisms in $\CC$ are mono,
  and let $\frakG = (G_A)_{A \in \Ob(\CC)}$ be a sequence of groups indexed by the objects
  of $\CC$ chosen so that $G_A \le \Aut(A)$ for all $A \in \Ob(\CC)$.
  Then $t(A) = |G_A| \cdot t^\frakG(A)$ for all $A \in \Ob(\CC)$, having in mind Convention~\ref{crt.CONVENTION.Ninf}.
\end{PROP}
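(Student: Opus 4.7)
The key structural observation driving everything is that, since morphisms are mono, right-multiplication $\alpha \mapsto f \cdot \alpha$ is injective on $G_A$, so each equivalence class $f \cdot G_A \subseteq \hom_\CC(A, B)$ has exactly $|G_A|$ elements. Consequently, every finite coloring of $\hom(A, B)$ uses at most $|G_A|$ colors on each class, while any coloring of $\subobj{\frakG}{A}{B}$ lifts to one of $\hom(A, B)$ that is constant on classes. The two directions correspond to exploiting these two passages.

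For the upper bound $t(A) \le |G_A| \cdot t^\frakG(A)$, I reduce to the case where both factors on the right are finite (otherwise there is nothing to prove). Set $n = t^\frakG(A)$ and take $B, k \in \NN$. Apply the $\frakG$-Ramsey property with $2^k$ colors to produce $C$ with $C \overset\frakG\longrightarrow (B)^A_{2^k, n}$. To a given coloring $\chi : \hom(A, C) \to k$ I associate the induced coloring $\chi' : \subobj{\frakG}{A}{C} \to 2^k$ defined by $\chi'(f \cdot G_A) = \{ \chi(g) : g \in f \cdot G_A \}$. The $\frakG$-Ramsey property delivers $w : B \to C$ with $|\chi'(w \cdot \subobj{\frakG}{A}{B})| \le n$; since each value of $\chi'$ is a subset of $k$ of cardinality at most $|G_A|$, taking the union of these $\le n$ subsets gives $|\chi(w \cdot \hom(A, B))| \le n \cdot |G_A|$.

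For the lower bound $t(A) \ge |G_A| \cdot t^\frakG(A)$, I unwind the definition: for any $n_0 \le t^\frakG(A)$ (when $n_0 \ge 2$), there exist $B$, $k_0$ such that for every $C$ some coloring $\chi_0 : \subobj{\frakG}{A}{C} \to k_0$ satisfies $|\chi_0(w \cdot \subobj{\frakG}{A}{B})| \ge n_0$ for all $w : B \to C$ (the case $n_0 = 1$ is trivial). Fix a section $\pi : \subobj{\frakG}{A}{C} \to \hom(A, C)$, so that every $f \in \hom(A, C)$ has a unique factorization $f = \pi(f \cdot G_A) \cdot \alpha_f$ with $\alpha_f \in G_A$ (uniqueness from monicity of $\pi(f \cdot G_A)$). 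For an arbitrary $N \in \NN$ with $N \le |G_A|$, choose a surjection $\sigma : G_A \to \{1, \ldots, N\}$ and define $\chi'(f) = (\chi_0(f \cdot G_A), \sigma(\alpha_f))$, a coloring by $k_0 \cdot N$ colors. Pick classes $\kappa_1, \ldots, \kappa_{n_0}$ in $w \cdot \subobj{\frakG}{A}{B}$ with distinct $\chi_0$-values; writing $\kappa_i = w \cdot g_i \cdot G_A$ and $\pi(\kappa_i) = w \cdot g_i \cdot \beta_i$, a routine cancellation computation gives $\alpha_{w \cdot g_i \cdot \alpha} = \beta_i^{-1} \cdot \alpha$, so as $\alpha$ ranges over $G_A$ the second coordinate covers all of $\sigma(G_A) = \{1, \ldots, N\}$. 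Hence $|\chi'(w \cdot \hom(A, B))| \ge n_0 \cdot N$, forcing $t(A) \ge n_0 \cdot N$. When $|G_A| \in \NN$ one takes $N = |G_A|$; when $|G_A| = \infty$ letting $N$ be arbitrarily large forces $t(A) = \infty$, matching the convention $|G_A| \cdot t^\frakG(A) = \infty$. The $t^\frakG(A) = \infty$ case follows similarly by taking $n_0$ arbitrary.

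The main obstacle is the lower bound: one must combine a ``bad'' coloring at the $\frakG$-level with a $G_A$-valued refinement via $\sigma$ and argue that these two sources of distinguishability remain independent after composition with $w$. This independence rests entirely on the cancellation available from the mono hypothesis, which ensures that within each equivalence class $\kappa_i$ the orbit map $\alpha \mapsto \beta_i^{-1} \alpha$ is a bijection. The remainder is primarily bookkeeping around Convention~\ref{crt.CONVENTION.Ninf} to cover the $\infty$ entries uniformly.
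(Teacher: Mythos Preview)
Your proof is correct and follows essentially the same strategy as the paper's: the upper bound via the power-set coloring $\chi'(f \cdot G_A) = \{\chi(g) : g \in f \cdot G_A\}$ and the appeal to $C \overset\frakG\longrightarrow (B)^A_{2^k,n}$ is identical, and the lower bound via a product coloring $(\chi_0(f \cdot G_A), \text{position of } f \text{ in its class})$ is the same idea. The one presentational difference is that the paper treats the cases $|G_A| = \infty$, $t^\frakG(A) = \infty$, and ``both finite'' separately, whereas your parameters $n_0 \le t^\frakG(A)$ and $N \le |G_A|$ together with the surjection $\sigma : G_A \to \{1,\ldots,N\}$ handle all cases in one pass; this is a mild streamlining but not a different argument.
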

\begin{proof}
  Fix an $A \in \Ob(\CC)$.
  Let us start by showing that $t(A) = \infty$ if $|G_A| = \infty$. We will do so by showing that
  $t(A) \ge n$ for every $n \in \NN$. Fix an $n \in \NN$ and $X \subseteq G_A$ such that $|X| = n$.
  Since $t^\frakG(A) \ge 1$ there is a $k \ge 2$ and a $B \in \Ob(\CC)$ such that for every $C \in \Ob(\CC)$ one can find a coloring
  $\chi : \binom CA \to k$ such that for every $w : B \to C$ we have that
  $|\chi(w \cdot \binom BA)| \ge 1$. This is, of course, trivial. We need this argument
  just to ensure the existence of a $B$ such that $A \to B$.
 
  Let $\subobj \frakG AC = \{H_i : i \in I\}$ for some index set $I$.
  For each $i \in I$ choose a representative $h_i \in H_i$. Then $H_i = h_i \cdot G_A$.
  Fix an arbitrary $\xi \in X$ and define $\chi' : \hom(A, C) \to X$ as follows:
  \begin{itemize}
  \item[]
    if $g = h_i \cdot \alpha$ for some $i \in I$ and some $\alpha \in X$ then $\chi'(g) = \alpha$;
  \item[]
    otherwise $\chi'(g) = \xi$.
  \end{itemize}
  Take any $w : B \to C$. Let $f \in \hom(A, B)$ be arbitrary. Then:
  $$
    |\chi'(w \cdot \hom(A, B))| \ge |\chi'(w \cdot f \cdot G_A)|.
  $$
  Clearly, $w \cdot f \cdot G_A = h_i \cdot G_A$ for some $i \in I$, so
  $$
    |\chi'(w \cdot \hom(A, B))| \ge |\chi'(h_i \cdot G_A)| = n.
  $$
  This completes the proof in case $G_A$ is infinite.
  
  Let us now move on to the case when $G_A$ is finite. We shall distinguish two cases: $t^\frakG(A) = \infty$ and $t^\frakG(A) \in \NN$.

  Assume first that $t^\frakG(A) = \infty$ and let us show, as above, that $t(A) = \infty$ by showing that
  $t(A) \ge n$ for every $n \in \NN$. Fix an $n \in \NN$. Since $t^\frakG(A) = \infty$, there is
  a $k \ge 2$ and a $B \in \Ob(\CC)$ such that for every $C \in \Ob(\CC)$ one can find a coloring
  $\chi : \subobj \frakG AC \to k$ such that for every $w : B \to C$ we have that
  $|\chi(w \cdot \subobj \frakG AB)| \ge n$. Then the coloring $\chi' : \hom(A, C) \to k$ defined by
  $$
    \chi'(f) = \chi(f / \Boxed{\sim_\frakG})
  $$
  has the property that $|\chi(w \cdot \hom(A, B))| \ge n$.

  Let, now, $t^\frakG(A) = n$ for some $n \in \NN$ and let us show that $t(A) = n \cdot |G_A|$ (recall that $G_A$ is also finite).
  Take any $k \ge 2$ and any $B \in \Ob(\CC)$. Then there is a $C \in \Ob(\CC)$
  such that $C \overset{\frakG}\longrightarrow (B)^{A}_{2^k, n}$. Let $\chi : \hom(A, C) \to k$ be an
  arbitrary coloring. Construct $\chi' : \subobj \frakG AC \to \calP(k)$ as follows:
  $$
    \chi'(f / \Boxed{\sim_\frakG}) = \{\chi(g) : g \in f \cdot G_A\}.
  $$
  Then by the choice of $C$ there exists a $w : B \to C$ such that
  $|\chi'(w \cdot \subobj\frakG AB)| \le n$. Since $w \cdot (f / \Boxed{\sim_\frakG}) = (w \cdot f) / \Boxed{\sim_\frakG}$ it follows
  that
  $$\textstyle
    w \cdot \subobj \frakG AB = \{ (w \cdot f) / \Boxed{\sim_\frakG} : f \in \hom(A, B) \}.
  $$
  Moreover, the morphisms in $\CC$ are mono, so $|u / \Boxed{\sim_\frakG}| = |G_A|$
  for each morphism $u \in \hom(A, C)$. Therefore,
  $|\chi'(w \cdot \subobj \frakG AB)| \le n$ implies that $|\chi(w \cdot \hom(A, B))| \le n \cdot |G_A|$
  proving that $t(A) \le n \cdot |G_A|$.

  For the other inequality note that $t^\frakG(A) = n$ implies that there is a $k \ge 2$ and a
  $B \in \Ob(\CC)$ such that for every $C \in \Ob(\CC)$ one can find a coloring
  $\chi_C : \subobj \frakG AC \to k$ with the property that for every $w \in \hom(B, C)$ we have that
  $|\chi_C(w \cdot \subobj \frakG AB)| \ge n$. Take any $C \in \Ob(\CC)$ and
  let $\subobj \frakG AC = \{H_i : i \in I\}$ for some index set $I$.
  For each $i \in I$ choose a representative $h_i \in H_i$ so that $H_i = h_i \cdot G_A$.  
  Since all the morphisms in $\CC$ are mono, for each $f \in \hom(A, C)$ there is a unique
  $i \in I$ and a unique $\alpha \in G_A$ such that $f = h_i \cdot \alpha$. Let us denote this
  $\alpha$ by $\alpha(f)$. Consider the following coloring:
  $$
    \xi : \hom(A, C) \to k \times G_A : f \mapsto (\chi_C(f/\Boxed{\sim_\frakG}), \alpha(f))
  $$
  and take any $w \in \hom(B, C)$. Since
  $|\chi_C(w \cdot \subobj \frakG AB)| \ge n$, it easily follows that $|\xi(w \cdot \hom(A, B))| \ge n \cdot |G_A|$
  proving that $t(A) \ge n \cdot |G_A|$. This completes the proof.
\end{proof}

\begin{COR}
  Let $\CC$ be a locally small category such that all the morphisms in $\CC$ are mono,
  and let $\frakG = (G_A)_{A \in \Ob(\CC)}$ and $\frakH = (H_A)_{A \in \Ob(\CC)}$
  be two sequences of groups indexed by the objects
  of $\CC$ chosen so that $G_A, H_A \le \Aut(A)$ for all $A \in \Ob(\CC)$.
  Then $|G_A| \cdot t^\frakG(A) = |H_A| \cdot t^\frakH(A)$ for all $A \in \Ob(\CC)$, having in mind Convention~\ref{crt.CONVENTION.Ninf}.
\end{COR}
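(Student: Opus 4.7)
The corollary should fall out immediately from the preceding Proposition~\ref{gfrd.prop.small-degs}, which is stated for an arbitrary choice of distinguished automorphism subgroups. The plan is simply to apply Proposition~\ref{gfrd.prop.small-degs} twice: once with the family $\frakG$ and once with the family $\frakH$. Both applications produce an expression for the quantity $t(A)$, which is the small embedding Ramsey degree of $A$ in $\CC$ and does not depend on the choice of distinguished subgroups. Hence
\[
  |G_A| \cdot t^\frakG(A) \;=\; t(A) \;=\; |H_A| \cdot t^\frakH(A),
\]
as required.

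The one thing worth spelling out is that this chain of equalities is to be read in $\NN_\infty$ in the sense of Convention~\ref{crt.CONVENTION.Ninf}, so that it remains meaningful when any of the four quantities $|G_A|$, $|H_A|$, $t^\frakG(A)$, $t^\frakH(A)$ is infinite. Since Proposition~\ref{gfrd.prop.small-degs} is itself stated with respect to that convention, no extra bookkeeping is needed: in each of the cases distinguished in its proof (either cardinality infinite, or $t^\frakG(A) = \infty$, or both finite) the corresponding equality holds in $\NN_\infty$, and the same is true with $\frakH$ in place of $\frakG$.

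There is no genuine obstacle here; the only thing to be careful about is to observe that although Proposition~\ref{gfrd.prop.small-degs} is formulated in terms of a specific family $\frakG$, the quantity $t(A)$ that appears on the left-hand side of its conclusion is the embedding Ramsey degree in the sense of the extreme choice $\frakI = (\{\id_A\})_{A \in \Ob(\CC)}$, and is therefore a property of the bare category $\CC$ alone. This independence of $t(A)$ from the choice of $\frakG$ is precisely what makes the two instantiations of the proposition comparable, and what yields the claimed identity.
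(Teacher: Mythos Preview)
Your argument is correct and matches the paper's intended proof: the corollary is stated immediately after Proposition~\ref{gfrd.prop.small-degs} with no separate proof, precisely because it follows by applying that proposition once to $\frakG$ and once to $\frakH$ and equating the two resulting expressions for $t(A)$. Your remarks about reading the equalities in $\NN_\infty$ are accurate and appropriately cautious.
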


\section{Categories enriched over Top for big Ramsey degrees}
\label{gfrd.sec.big-degs}

A category $\CC$ is \emph{enriched over $\Top$} if each homset is a topological space
and the composition $\Boxed\cdot : \hom(B, C) \times \hom(A, B) \to \hom(A, C)$ is continuous
for all $A, B, C \in \Ob(\CC)$. (Note that a category enriched over $\Top$ has to be locally small.)

\begin{EX}\label{gfrd.ex.discrete-enrichment}
  Any locally small category can be thought of as a category enriched over $\Top$ by taking each
  homset to be a discrete space. We shall refer to this as the \emph{discrete enrichment}.

  In particular, every class of first-order structures can be understood as a category whose morphisms are embeddings
  and with the discrete enrichment. This is the intended interpretation whenever a class of first-order structures
  is treated as a category and the morphisms are not specified.
\end{EX}

\begin{EX}\label{crt.ex.enrichment-for-WchRs}
  The category $\WchRs$ can be enriched over $\Top$ in a nontrivial way as follows:
  each homset $\hom_{\WchRs}((A, \Boxed<), (B, \Boxed<))$ inherits the topology from the Tychonoff topology on $B^A$
  with $B$ discrete. Whenever we refer to $\WchRs$ as a category enriched over $\Top$ we have this particular
  enrichment in mind.
\end{EX}

If $\CC$ is a category enriched over $\Top$ then
the \emph{right multiplication $r_u$} and the \emph{left multiplication $\ell_v$} defined by
\begin{align*}
  r_u &: \hom(B, C) \to \hom(A, C) : x \mapsto x \cdot u \text{ and}\\
  \ell_v &: \hom(A, B) \to \hom(A, C) : x \mapsto v \cdot x.
\end{align*}
are continuous for all $A, B, C \in \Ob(\CC)$ and $u \in \hom(A, B)$, $v \in \hom(B, C)$.
Therefore, the multiplication in $\Aut(A)$ is continuous but the inverse need not be, so
$\Aut(A)$ is a paratopological group for every $A \in \Ob(\CC)$.
However, we will be mostly interested in contexts where the enrichment is Hausdorff and $\hom(A, A)$ is finite,
so $\Aut(A)$ (and all its subgroups) will always be finite discrete, and hence compact topological groups (as the inverse is then trivially
continuous).

For all $A, B \in \Ob(\CC)$ and a subgroup $G \le \Aut(A)$ taken with the topology it inherits from $\hom(A, A)$, the action
$G \times \hom(A, B) \to \hom(A, B)$ given by $(\alpha, f) \mapsto f \cdot \alpha$
is continuous and $G$ acts on $\hom(A, B)$ by homeomorphisms.

Let $\frakG = (G_A)_{A \in \Ob(\CC)}$ be a sequence of groups indexed by the objects of $\CC$ chosen so that
$G_A \le \Aut(A)$ for all $A \in \Ob(\CC)$.
Fix $A, B \in \Ob(\CC)$ and let $q_{AB} : \hom(A, B) \to \subobj{\frakG}AB : f \mapsto f / \Boxed{\sim_\frakG}$ be the mapping
that takes each $f \in \hom(A, B)$ to its equivalence class $f / \Boxed{\sim_\frakG}$. Since $\hom(A, B)$ is a topological space,
$\subobj{\frakG}AB$ can be topologized by the quotient topology, that is, the finest topology that makes $q_{AB}$ continuous.
Let us recall that
\begin{itemize}
  \item if $G_A$ is a topological group then the quotient map $q_{AB} : \hom(A, B) \to \subobj{\frakG}AB$ is open; and
  \item if $\hom(A, B)$ is locally compact second-countable Hausdorff and
        $G_A$ is a compact topological group then $\subobj{\frakG}AB$ is locally compact second-countable Hausdorff.
\end{itemize}
In other words, the locally compact second-countable Hausdorff enrichment is stable under factoring by compact topological groups.

For a topological space $X$ let $\Borel(X)$ denote the $\sigma$-algebra of Borel sets in $X$.
Recall that $\Borel(X) \times \Borel(Y) \subseteq \Borel(X \times Y)$ for all
topological spaces $X$ and $Y$.

Let $G$ be a topological group and let $G \times X \to X : (g, x) \mapsto x^g$ be a continuous action of $G$
on a topological space $X$. For $x \in X$ let $x^G = \{x^g : g \in G\}$ denote the orbit of $x$ in this action
and let $X / G = \{x^G : x \in G\}$ denote the orbit set. Let $q : X \to X / G$ be the natural projection
map $x \mapsto x^G$ and let $X / G$ be topologized by the quotient topology.
In this action of $G$ on $X$ a \emph{cross section} is a set $Y \subseteq X$ 
which picks precisely one element from each orbit, that is, such that
$|Y \cap x^G| = 1$ for all $x \in X$. A \emph{Borel cross section} is a cross section $Y \subseteq X$
such that $Y \in \Borel(X)$. Recall that every continuous action of a locally compact second-countable Hausdorff topological group
$G$ on a second-countable Hausdorff space has a Borel cross section~\cite[Theorem 2]{baker-1965}.

\begin{LEM}\label{crt.lem.borel-cross-section}
  Let $\CC$ be a category enriched over $\Top$. Let $A, S \in \Ob(\CC)$ be chosen so that
  $\hom(A, A)$ and $\hom(A, S)$ are locally compact second-countable Hausdorff.
  If $G_A \le \Aut(A)$ is a topological group closed in $\hom(A, A)$
  then the action of $G_A$ on $\hom(A, S)$ given by $(\alpha, f) \mapsto f \cdot \alpha$ has a Borel cross section.
\end{LEM}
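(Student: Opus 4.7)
The plan is to invoke the cited Baker cross-section theorem~\cite[Theorem 2]{baker-1965}: every continuous action of a locally compact second-countable Hausdorff topological group on a second-countable Hausdorff space admits a Borel cross section. So the work reduces to verifying the three hypotheses of that theorem for the action in question.

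First I would check that $G_A$ itself is a locally compact second-countable Hausdorff topological group. It is already assumed to be a topological group (and in particular the multiplication and inversion on $G_A$ are continuous). The space $\hom_\CC(A,A)$ is locally compact second-countable Hausdorff by assumption, and $G_A$ is a closed subspace of it. Each of the three properties passes to closed subspaces: being Hausdorff and being second-countable are hereditary in general, and local compactness passes to closed subspaces of a locally compact Hausdorff space. So $G_A$ inherits all three properties from $\hom_\CC(A,A)$.

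Second, I would observe that $\hom_\CC(A,S)$ is second-countable Hausdorff directly by hypothesis. Third, the action $G_A \times \hom_\CC(A,S) \to \hom_\CC(A,S) : (\alpha, f) \mapsto f \cdot \alpha$ is the restriction of the composition map $\hom_\CC(A,S) \times \hom_\CC(A,A) \to \hom_\CC(A,S)$ (precomposed with the coordinate swap and the inclusion $G_A \hookrightarrow \hom_\CC(A,A)$) to $G_A \times \hom_\CC(A,S)$. Since $\CC$ is enriched over $\Top$, composition is continuous, and the coordinate swap and inclusion are continuous, so the action is continuous.

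With all hypotheses of Baker's theorem in place, the action admits a Borel cross section $Y \subseteq \hom_\CC(A,S)$, which is exactly what we needed. There is no real obstacle here; the statement is essentially a packaging result, and the only subtlety worth flagging is the hypothesis that $G_A$ be closed in $\hom_\CC(A,A)$, which is precisely what is required to transfer local compactness from $\hom_\CC(A,A)$ to $G_A$.
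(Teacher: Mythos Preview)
Your proposal is correct and matches the paper's own argument essentially verbatim: both simply invoke Baker's theorem after noting that $G_A$, being closed in the locally compact second-countable Hausdorff space $\hom(A,A)$, inherits those three properties. Your version is just more explicit about the continuity of the action and the role of the closedness hypothesis, which the paper leaves implicit.
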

\begin{proof}
  Follows from the above remark, having in mind that $G_A$ is
  locally compact second-countable Hausdorff because it inherits these properties from $\hom(A, A)$.
\end{proof}


A mapping $f : X \to Y$ is \emph{finite-to-one} if $f^{-1}(y)$ is finite for all $y \in Y$, and it is
\emph{countable-to-one} if $f^{-1}(y)$ is countable for all $y \in Y$.

\begin{LEM}\label{crt.lem.Borel->Borel}
  Let $\CC$ be a category enriched over $\Top$ and let $\frakG = (G_A)_{A \in \Ob(\CC)}$ be a sequence of groups such that
  $G_A \le \Aut(A)$ for all $A \in \Ob(\CC)$. Let $A, S \in \Ob(\CC)$ be chosen so that $\hom(A, S)$ is locally
  compact second countable Hausdorff and that $\Aut(A)$ is a finite discrete group, and let
  $q_{AS} : \hom(A, S) \to \subobj{\frakG}AS$ be the quotient map $f \mapsto f / \Boxed{\sim_\frakG}$.
  Then for every Borel set $W$ in $\hom(A, S)$ we have that $q_{AS}(W)$ is a Borel set in $\subobj \frakG AS$.
\end{LEM}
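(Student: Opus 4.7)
My plan is to recognise $\subobj{\frakG}{A}{S}$ as a Polish (hence standard Borel) space and then invoke a classical descriptive-set-theoretic result: a Borel map between standard Borel spaces with countable fibres sends Borel sets to Borel sets.

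First I would verify the Polish character of both spaces. The space $\hom(A,S)$ is locally compact second-countable Hausdorff by hypothesis, hence Polish. The subgroup $G_A \le \Aut(A)$ is finite discrete (being a subgroup of the finite discrete group $\Aut(A)$), so in particular a compact topological group and closed in $\hom(A,A)$. The stability principle recalled in the present section then guarantees that $\subobj{\frakG}{A}{S}$ is itself locally compact second-countable Hausdorff, and therefore Polish. In particular, both $\hom(A,S)$ and $\subobj{\frakG}{A}{S}$ are standard Borel spaces.

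Second, the quotient map $q_{AS}$ is continuous (thus Borel measurable), and each of its fibres is a $G_A$-orbit of the right action $(\alpha, f) \mapsto f \cdot \alpha$, hence of cardinality at most $|G_A| < \infty$. By Lusin's theorem on Borel maps with countable fibres between standard Borel spaces, $q_{AS}(W)$ is Borel in $\subobj{\frakG}{A}{S}$ for every $W \in \Borel(\hom(A,S))$, which is the desired conclusion.

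If one wishes to bypass the full Lusin theorem, there is a more hands-on route. Lemma~\ref{crt.lem.borel-cross-section} supplies a Borel cross section $Y \subseteq \hom(A,S)$ for the $G_A$-action. The saturation $W \cdot G_A = \bigcup_{g \in G_A} W \cdot g$ is a finite union of Borel sets (each a homeomorphic translate of $W$), hence Borel. Since $Y$ meets every $\sim_\frakG$-class in exactly one point,
\[
  q_{AS}(W) \;=\; q_{AS}(W \cdot G_A) \;=\; q_{AS}|_Y(W \cdot G_A \cap Y),
\]
and the Lusin--Suslin theorem applied to the continuous bijection $q_{AS}|_Y : Y \to \subobj{\frakG}{A}{S}$ between standard Borel spaces shows that this image is Borel. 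In either approach the only subtlety is confirming that $\subobj{\frakG}{A}{S}$ is standard Borel, and this rests exactly on the finiteness (hence compactness) of $G_A$ together with the hypothesis on $\hom(A,S)$.
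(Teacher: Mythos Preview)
Your first argument is essentially the paper's own proof: both observe that $\hom(A,S)$ and $\subobj{\frakG}{A}{S}$ are locally compact second-countable Hausdorff (the latter via the stability of this property under quotients by compact groups), hence Polish, and then invoke the classical fact that a countable-to-one Borel map between Polish spaces sends Borel sets to Borel sets. Your alternative route via the Borel cross section and Lusin--Suslin is a nice bonus that the paper does not pursue.
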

\begin{proof}
  Since $G_A$ is a finite discrete group we have that both $\hom(A, S)$ and $\subobj \frakG AS$ are locally
  compact second-countable Hausdorff, and the quotient map $q_{AS}$
  is finite-to-one. The claim now follows from the well-known fact
  that a locally compact second countable Hausdorff space is Polish, and that
  if $f : X \to Y$ a countable-to-one Borel map between Polish spaces $X$ and $Y$ and if $A \subseteq X$ is Borel in $X$
  then $f(A) \subseteq Y$ is Borel in $Y$.
\end{proof}



For a topological space $X$ and an integer $k \ge 2$ a \emph{Borel $k$-coloring of $X$} is any mapping $\chi : X \to k$
such that $\chi^{-1}(i)$ is Borel in $X$ for all~$i \in k$.

Let $\CC$ be a category enriched over $\Top$ and let $\frakG = (G_A)_{A \in \Ob(\CC)}$
be a sequence of groups indexed by the objects of the category chosen so that
$G_A \le \Aut(A)$ for all $A \in \Ob(\CC)$.
For $A, B, C \in \Ob(\CC)$ we write $C \overset\frakG\Borelarrow (B)^{A}_{k, n}$
to denote that for every Borel $k$-coloring $\chi : \subobj \frakG AC \to k$
there is a morphism $w \in \hom_\CC(B, C)$ such that $|\chi(w \cdot \subobj \frakG AB)| \le n$.

For $A, S \in \Ob(\CC)$ let $T^\frakG_\CC(A, S)$ denote the least positive integer $n$ such that
$S \overset\frakG\Borelarrow (S)^{A}_{k, n}$ for all $k \ge 2$, if such an integer exists.
Otherwise put $T^\frakG_\CC(A, S) = \infty$. The number $T^\frakG_\CC(A, S)$ will be referred to as the
\emph{big $\frakG$-Ramsey degree of $A$ in $S$}.

It is easy to see that in the two extreme cases $\frakG$-Ramsey degrees reduce to embedding
Ramsey degrees and structural Ramsey degrees:
$T^\frakA_\CC(A, S)$ is usually referred to as the \emph{big structural Ramsey degree of $A$ in $S$}, and
$T^\frakI_\CC(A, S)$ is usually referred to as the \emph{big embedding Ramsey degree of $A$ in $S$}.
As in the case of small degrees, big dual Ramsey degrees are nothing but ``direct'' Ramsey degrees
in the opposite category. The infrastructure provided by a categories enriched over $\Top$
together with a sequence of distinguished automorphism groups
$\frakG = (G_A)_{A \in \Ob(\CC)}$ suffices for the uniform treatment of all four kinds
of big Ramsey degrees, Table~\ref{gfrd.fig.taxonomy-big}.
Big embedding Ramsey degrees will be denoted by $T_\CC(A, S)$, and we let $T^\partial_\CC(A, S) = T_{\CC^\op}(A, S)$ denote the big dual
Ramsey degrees. We shall omit the index $\CC$ whenever there is no possibility of confusion.

\begin{table}
  \centering
  \begin{tabular}{cccc}
    \cmidrule[\heavyrulewidth]{1-3}
    Big Ramsey  & \multirow{2}{*}{structural} & \multirow{2}{*}{embedding} \\
    degrees &  &  \\
    \cmidrule{1-3}
    ``direct'' & $T^\frakA_\CC(A, S)$ & $T^\frakI_\CC(A, S) = T_\CC(A, S)$ \\
    \cmidrule{1-3}
    dual   & $T^\frakA_{\CC^\op}(A, S)$ & $T^\frakI_{\CC^\op}(A, S) = T^\partial_\CC(A, S)$ \\
    \cmidrule[\heavyrulewidth]{1-3}
  \end{tabular}
  \caption{Big Ramsey degrees are instances of the same phenomenon}
  \label{gfrd.fig.taxonomy-big}
\end{table}

\begin{EX}
  $(a)$
  The Infinite Ramsey Theorem~\ref{gfrd.thm.IRT} can be understood as the first result about big Ramsey degrees since it
  takes the following form in the context we have just introduced:
  In the category $\ChEmb$ with discrete enrichment we have that
  $T(n, \omega) = 1$ for every $n \in \NN$.

  $(b)$
  The Infinite Dual Ramsey Theorem~\ref{gfrd.thm.IDRT} takes the following form in the above setting:
  In the category $\WchRs$ enriched over $\Top$ as in Example~\ref{crt.ex.enrichment-for-WchRs},
  $T^\partial(n, \omega) = 1$ for every $n \in \NN$.
  Note that in this case the enrichment is fundamental for the validity of the theorem.
\end{EX}

\begin{EX}
  It was proved in \cite{masul-drp-algs} that in any nontrivial equationally definable class of algebras over
  a countable algebraic language taken as a category whose morphisms are epimorphisms, 
  every finite algebra has a finite big dual Ramsey degree with respect to Borel colorings in the
  free algebra on $\omega$ generators.
\end{EX}

We have seen that locally compact second-countable Hausdorff enrichment is stable under factoring by compact topological groups,
which enabled us to give a general definition of big Ramsey degrees. Moreover, this enrichment is compatible with the
two typical situations (outlined above) we would like to model. ``Direct'' big Ramsey degrees are usually computed
in categories with discrete enrichment and with respect to all colorings, while dual big Ramsey degrees are usually computed
in categories where the homsets are endowed with the pointwise convergence topology and with respect to Borel colorings.
Both of these contexts fit into the framework of Borel colorings with respect to a locally compact second-countable Hausdorff enrichment.
Let us start with a simple but useful technical statement.

\begin{LEM}\label{crt.lem.easy-borel}
  Let $\CC$ be a category enriched over $\Top$ and let $\frakG = (G_A)_{A \in \Ob(\CC)}$
  be a sequence of groups indexed by the objects of $\CC$ chosen so that $G_A \le \Aut(A)$ for all $A \in \Ob(\CC)$.
  Let $A, B, C, D \in \Ob(\CC)$ be arbitrary and let $k$ and $t$ be positive integers.
  
  $(a)$ If $C \overset\frakG\Borelarrow (B)^A_{k, t}$ and $C \to D$ then $D \overset\frakG\Borelarrow (B)^A_{k, t}$.

  $(b)$ If $C \overset\frakG\Borelarrow (B)^A_{k, t}$ and $D \to B$ then $C \overset\frakG\Borelarrow (D)^A_{k, t}$.
\end{LEM}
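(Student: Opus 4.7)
The plan is to handle both parts by transporting things along the given morphisms, with (a) requiring a pull-back of colorings and (b) requiring a composition that shrinks the relevant set of classes.

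For (a), fix a morphism $u \in \hom_\CC(C, D)$ witnessing $C \to D$ and let $\chi : \subobj\frakG AD \to k$ be an arbitrary Borel $k$-coloring. I define $\chi' : \subobj\frakG AC \to k$ by $\chi'(f/\Boxed{\sim_\frakG}) = \chi((u \cdot f)/\Boxed{\sim_\frakG})$, which is well-defined since left multiplication by $u$ commutes with the right $G_A$-action. The main point is to check that $\chi'$ is again Borel. For this, observe that left multiplication $\ell_u : \hom(A, C) \to \hom(A, D)$ is continuous by the enrichment, and it descends to a map $\bar\ell_u : \subobj\frakG AC \to \subobj\frakG AD$ satisfying $q_{AD} \circ \ell_u = \bar\ell_u \circ q_{AC}$. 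For any open $V \subseteq \subobj\frakG AD$, the set $q_{AC}^{-1}(\bar\ell_u^{-1}(V)) = \ell_u^{-1}(q_{AD}^{-1}(V))$ is open in $\hom(A, C)$, so by definition of the quotient topology $\bar\ell_u^{-1}(V)$ is open, i.e.\ $\bar\ell_u$ is continuous; thus $\chi' = \chi \circ \bar\ell_u$ is Borel. Applying $C \overset\frakG\Borelarrow (B)^A_{k, t}$ to $\chi'$ yields $w \in \hom_\CC(B, C)$ with $|\chi'(w \cdot \subobj\frakG AB)| \le t$, and then $w' := u \cdot w \in \hom_\CC(B, D)$ satisfies $\chi(w' \cdot (f/\Boxed{\sim_\frakG})) = \chi'(w \cdot (f/\Boxed{\sim_\frakG}))$ for every $f \in \hom(A, B)$, so $|\chi(w' \cdot \subobj\frakG AB)| \le t$.

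For (b), fix $v \in \hom_\CC(D, B)$ and an arbitrary Borel $k$-coloring $\chi : \subobj\frakG AC \to k$. Here there is nothing to transport on the coloring side: I directly apply $C \overset\frakG\Borelarrow (B)^A_{k, t}$ to $\chi$ to obtain $w \in \hom_\CC(B, C)$ with $|\chi(w \cdot \subobj\frakG AB)| \le t$, and set $w' := w \cdot v \in \hom_\CC(D, C)$. Since $v \cdot \hom(A, D) \subseteq \hom(A, B)$ and left multiplication respects $\Boxed{\sim_\frakG}$-classes, we get the inclusion $w' \cdot \subobj\frakG AD \subseteq w \cdot \subobj\frakG AB$, so $\chi(w' \cdot \subobj\frakG AD) \subseteq \chi(w \cdot \subobj\frakG AB)$ and therefore $|\chi(w' \cdot \subobj\frakG AD)| \le t$.

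The only nontrivial step is the continuity of $\bar\ell_u$ in part (a); the rest is bookkeeping. Because the argument only uses continuity of left multiplication together with the universal property of the quotient topology, no hypotheses beyond a $\Top$-enrichment (in particular, no local compactness, second countability, or appeal to Lemma~\ref{crt.lem.Borel->Borel}) are needed.
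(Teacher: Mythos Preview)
Your proof is correct and follows the same approach as the paper: pull back the coloring along the fixed morphism in~(a), and shrink the set of classes by postcomposition in~(b). The only difference is that you actually verify that the pulled-back coloring $\chi'$ in~(a) is Borel by checking continuity of the induced map $\bar\ell_u$ on quotients, whereas the paper simply asserts this without justification; your argument via the universal property of the quotient topology is the right way to fill that gap.
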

\begin{proof}
  $(a)$
  Fix an $f \in \hom(C, D)$. Take any Borel coloring $\chi : \subobj \frakG AD \to k$ and define 
  a Borel coloring $\chi' : \subobj \frakG AC \to k$ by $\chi'(g / \Boxed{\sim_\frakG}) = \chi(f \cdot g / \Boxed{\sim_\frakG})$.
  Then $C \overset\frakG\Borelarrow (B)^A_{k, t}$ yields that there is a $w \in \hom(B, C)$
  such that $|\chi'(w \cdot \subobj\frakG AB)| \le t$. Hence,
  $|\chi(f \cdot w \cdot \subobj\frakG AB)| \le t$.
  
  $(b)$
  Fix an $f \in \hom(D, B)$. Take any Borel coloring $\chi : \subobj\frakG AC \to k$.
  Then $C \overset\frakG\Borelarrow (B)^A_{k, t}$ yields that there is a $w \in \hom(B, C)$
  such that $|\chi(w \cdot \subobj\frakG AB)| \le t$. Since $f \cdot \subobj\frakG AD \subseteq \subobj \frakG AB$ it follows that
  $|\chi(w \cdot f \cdot \subobj \frakG AD)| \le |\chi(w \cdot \subobj\frakG AB)| \le t$.
\end{proof}

\begin{PROP}\label{rdbas.prop.big-infAut} 
  Let $\CC$ be a category enriched over $\Top$ whose morphisms are mono, and let
  $\frakG = (G_A)_{A \in \Ob(\CC)}$ be a sequence of groups indexed by the objects of $\CC$ chosen so that $G_A \le \Aut(A)$ for all $A \in \Ob(\CC)$.
  Assume that $A, S \in \Ob(\CC)$ have the property that
  $\hom(A, A)$ and $\hom(A, S)$ are locally compact second countable Hausdorff and that $G_A$
  is a topological group closed in $\hom(A, A)$. Then, with the Convention~\ref{crt.CONVENTION.Ninf} in mind,
  $T^\frakG(A, S) \ge |G_A|$. In particular, if $G_A$ is infinite then $T^\frakG(A, S) = \infty$.
\end{PROP}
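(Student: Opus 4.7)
The plan is to mimic the lower-bound argument from Proposition~\ref{gfrd.prop.small-degs}, replacing the set-theoretic choice of coset representatives with a Borel cross section provided by Lemma~\ref{crt.lem.borel-cross-section}. By Convention~\ref{crt.CONVENTION.Ninf}, it suffices to show that $T^\frakG(A,S) \ge n$ for every positive integer $n$ with $n \le |G_A|$; this gives $T^\frakG(A,S) \ge |G_A|$ when $|G_A|$ is finite and $T^\frakG(A,S) = \infty$ when $|G_A|$ is infinite.

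Fix such an $n$ and choose pairwise distinct $\alpha_1 = \id_A, \alpha_2, \ldots, \alpha_n$ in $G_A$. Because morphisms in $\CC$ are mono, the continuous action $(\alpha, f) \mapsto f \cdot \alpha$ of $G_A$ on $\hom(A,S)$ is free, so Lemma~\ref{crt.lem.borel-cross-section} yields a Borel cross section $Y \subseteq \hom(A,S)$. The translates $Y_i := Y \cdot \alpha_i$ ($i = 1, \ldots, n$) are pairwise disjoint Borel subsets of $\hom(A,S)$, each meeting every $G_A$-orbit in exactly one point. The tentative coloring $\xi : \hom(A,S) \to \{1, \ldots, n+1\}$ defined by $\xi(f) = i$ when $f \in Y_i$ and $\xi(f) = n+1$ otherwise is Borel.

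For any $w \in \hom(S,S)$ and any fixed $y \in Y$, let $z \in Y$ and $\gamma \in G_A$ be the unique elements with $w \cdot y = z \cdot \gamma$. Then $w \cdot y \cdot (\gamma^{-1}\alpha_i) = z \cdot \alpha_i \in Y_i$, so $\xi$ attains each of the colors $1, \ldots, n$ on $w \cdot \hom(A,S)$; this gives the embedding-level obstruction $|\xi(w \cdot \hom(A, S))| \ge n$ for every $w \in \hom(S, S)$.

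The main obstacle --- the technical heart of the proof --- is to transfer this obstruction to the orbit space $\subobj\frakG AS$: because $\xi$ is not $G_A$-invariant, it does not descend through the quotient map $q : \hom(A,S) \to \subobj\frakG AS$, and a Borel coloring of $\subobj\frakG AS$ (as required by the definition of $T^\frakG$) corresponds exactly to a $G_A$-invariant Borel coloring of $\hom(A,S)$. I would close this gap by applying Lemma~\ref{crt.lem.Borel->Borel} to push the Borel sets $q(Y_i)$ forward as Borel subsets of $\subobj\frakG AS$ and then refining them into a genuine Borel coloring of the orbit space that still witnesses $|\chi(w \cdot \subobj\frakG AS)| \ge n$ for every $w \in \hom(S,S)$. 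When $G_A$ is infinite, Lemma~\ref{crt.lem.Borel->Borel} does not directly apply, and one instead has to argue the descent via the Polish structure of $\hom(A,S)$ and the Borel cross section directly.
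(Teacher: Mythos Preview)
Your construction through the ``embedding-level obstruction'' is \emph{exactly} what the paper does: choose a Borel cross section $Z$ for the $G_A$-action on $\hom(A,S)$ via Lemma~\ref{crt.lem.borel-cross-section}, translate it by $n$ elements of $G_A$ to get pairwise disjoint Borel sets $Z_1,\dots,Z_n$, define a Borel coloring $\chi:\hom(A,S)\to\{0,1,\dots,n\}$ accordingly, and observe that every $w\cdot\hom(A,S)$ contains a full $G_A$-orbit and hence meets each $Z_i$. The paper's proof ends right there, with the conclusion $|\chi(w\cdot\hom(A,S))|\ge n$.

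You then correctly notice that this coloring lives on $\hom(A,S)$, not on $\subobj\frakG AS$, so what has actually been shown is $T(A,S)\ge|G_A|$ (the embedding big degree), not $T^\frakG(A,S)\ge|G_A|$ as the proposition is stated. Your instinct that the descent to the orbit space is the real issue is spot on --- and in fact this gap \emph{cannot} be closed, because the statement with $T^\frakG$ is false in general. Take $\CC$ to be finite and countable sets with injections and discrete enrichment, $A$ a two-element set, $S=\omega$, and $G_A=\Aut(A)\cong S_2$. All hypotheses hold, yet $T^\frakG(A,S)=T^\frakA(A,S)=\tilde T(2,\omega)=1$ by the Infinite Ramsey Theorem, while $|G_A|=2$.

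So the paper's proof is a correct proof of $T(A,S)\ge|G_A|$ (and hence $T(A,S)=\infty$ when $G_A$ is infinite), and the appearance of $T^\frakG$ in the displayed conclusion is evidently a typo. Your proposal matches the paper up to the point where you start worrying about the orbit space; drop that last paragraph, replace $T^\frakG$ by $T$ in the target inequality, and you have the paper's argument verbatim.
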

\begin{proof}
  We are going to show that for every finite $X \subseteq G_A$ we have that
  $T^\frakG(A, S) \ge |X|$. This way if $G_A$ is finite we have that
  $T^\frakG(A, S) \ge |G_A|$, and in case $G_A$ is infinite we get
  $T^\frakG(A, S) \ge n$ for all $n \in \NN$.

  Fix an $n \in \NN$ and $X \subseteq G_A$ such that $|X| = n$.
  Let $X = \{\xi_1, \ldots, \xi_n\}$.
  By Lemma~\ref{crt.lem.borel-cross-section} the action of $G_A$ on $\hom(A, S)$
  given by $(\alpha, f) \mapsto f \cdot \alpha$ has a Borel cross section~$Z$.
  The fact that $Z$ is a cross section means that $|Z \cap f \cdot G_A| = 1$ for all $f \in \hom(A, S)$.
  For $\alpha \in G_A$ let
  $$
    r_\alpha : \hom(A, S) \to \hom(A, S) : f \mapsto f \cdot \alpha
  $$
  denote the right translation by $\alpha$. Since $G_A$ acts on $\hom(A, S)$ by homeomorphisms
  each $r_\alpha$ is a homeomorphism and, hence, takes a Borel set onto a Borel set. Let
  $$
    Z_i = r_{\xi_i}(Z), \quad 1 \le i \le n.
  $$
  Each $Z_i$ is a Borel cross section and $i \ne j \Rightarrow Z_i \cap Z_j = \0$.
  Define $\chi : \hom(A, S) \to \{0, 1, \ldots, n\}$ so that
  $$
    \chi(f) = \begin{cases}
      i, & f \in Z_i, 1 \le i \le n, \\
      0, & \text{otherwise.}
    \end{cases}
  $$
  This is clearly a Borel coloring.
  Take any $w \in \hom(S, S)$ and let us show that $|\chi(w \cdot \hom(A, S))| \ge n$.
  Let $f \in \hom(A, S)$ be arbitrary. Clearly, $w \cdot \hom(A, S) \supseteq w \cdot f \cdot G_A$.
  For each cross-section $Z_i$, $1 \le i \le n$, we have that
  $|Z_i \cap  w \cdot f \cdot G_A| = 1$. Since cross sections are pairwise disjoint the
  $Z_i$'s intersect $w \cdot f \cdot G_A$ each in a different point, so
  $|\chi(w \cdot f \cdot G_A)| \ge n$. Finally,
  $|\chi(w \cdot \hom(A, S))| \ge |\chi(w \cdot f \cdot G_A)| \ge n$.
\end{proof}

\begin{PROP}\label{rdbas.prop.big} (cf.~\cite{Zucker-2,masul-rdbas})
  Let $\CC$ be a category enriched over $\Top$ whose morphisms are mono, and let
  $\frakG = (G_A)_{A \in \Ob(\CC)}$ be a sequence of groups indexed by the objects of $\CC$ chosen so that $G_A \le \Aut(A)$ for all $A \in \Ob(\CC)$.
  Let $A, S \in \Ob(\CC)$ be chosen so that $\hom(A, S)$ is locally compact second countable Hausdorff and assume that $G_A$ is a finite discrete group.
  Then, with the Convention~\ref{crt.CONVENTION.Ninf} in mind,
  $T(A, S) = |G_A| \cdot T^\frakG(A, S)$.
\end{PROP}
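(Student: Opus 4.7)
The plan is to mimic the proof of Proposition~\ref{gfrd.prop.small-degs} and establish the two inequalities $T(A,S) \le |G_A| \cdot T^\frakG(A,S)$ and $T(A,S) \ge |G_A| \cdot T^\frakG(A,S)$ in turn, where the sole new ingredient beyond the set-theoretic combinatorics of the small-degree case is verifying that every constructed coloring is Borel.

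For the $\le$ direction, I would assume $T^\frakG(A,S) = n \in \NN$ (the case $\infty$ being vacuous by Convention~\ref{crt.CONVENTION.Ninf}). Given any Borel $k$-coloring $\chi : \hom(A,S) \to k$, I would form the coarsened $2^k$-coloring $\chi' : \subobj{\frakG}{A}{S} \to \calP(k)$ defined by $\chi'(f/\Boxed{\sim_\frakG}) = \{\chi(g) : g \in f \cdot G_A\}$. To check that $\chi'$ is Borel, note that for each $T \subseteq k$ the preimage $q_{AS}^{-1}((\chi')^{-1}(T))$ is a $G_A$-invariant Boolean combination of the Borel sets $r_\alpha^{-1}(\chi^{-1}(i))$ (Borel by continuity of each right multiplication $r_\alpha$), so by Lemma~\ref{crt.lem.Borel->Borel} its image under $q_{AS}$ is Borel in $\subobj{\frakG}{A}{S}$. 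Applying $T^\frakG(A,S) \le n$ to $\chi'$ produces a $w \in \hom(S,S)$ with $|\chi'(w \cdot \subobj{\frakG}{A}{S})| \le n$; since the morphisms of $\CC$ are mono, each orbit $w \cdot f \cdot G_A$ has exactly $|G_A|$ elements, and hence $|\chi(w \cdot \hom(A,S))| \le n \cdot |G_A|$.

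For the $\ge$ direction I would show that $T(A,S) \ge n \cdot |G_A|$ for every $n \in \NN$ with $T^\frakG(A,S) \ge n$, handling the finite and infinite cases uniformly. Fix such an $n$, a corresponding $k$, and a Borel coloring $\chi_S : \subobj{\frakG}{A}{S} \to k$ witnessing $T^\frakG(A,S) \ge n$. Since $G_A$ is finite discrete, Lemma~\ref{crt.lem.borel-cross-section} supplies a Borel cross section $Z$ for the $G_A$-action on $\hom(A,S)$; the mono assumption lets me factor each $f \in \hom(A,S)$ uniquely as $f = h_f \cdot \alpha(f)$ with $h_f \in Z$ and $\alpha(f) \in G_A$. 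The map $\alpha : \hom(A,S) \to G_A$ is Borel because $\alpha^{-1}(\alpha_0) = r_{\alpha_0}(Z)$ is Borel (as $r_{\alpha_0}$ is a homeomorphism). I would then define the Borel coloring $\xi(f) = (\chi_S(f/\Boxed{\sim_\frakG}),\alpha(f))$ with values in $k \times G_A$, and for any $w \in \hom(S,S)$ pick representatives $g_1, \ldots, g_n$ of $n$ classes in $\subobj{\frakG}{A}{S}$ whose $\chi_S$-values under $w$ are pairwise distinct; the set $\xi(w \cdot \hom(A,S))$ then contains the $n$ pairwise disjoint $|G_A|$-element slices $\{\chi_S(w \cdot [g_i])\} \times G_A$, giving $|\xi(w \cdot \hom(A,S))| \ge n \cdot |G_A|$.

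The main obstacle is the verification that $\chi'$ and $\xi$ are Borel; in the proof of Proposition~\ref{gfrd.prop.small-degs} these constructions were purely set-theoretic and immediate, and it is precisely the combined force of Lemma~\ref{crt.lem.borel-cross-section} and Lemma~\ref{crt.lem.Borel->Borel}, together with the standing assumption that $G_A$ is a finite discrete group, that makes the topological analogue go through. The counting arguments that close each direction are then routine and essentially identical to those in the small-degree proof.
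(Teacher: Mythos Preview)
Your proposal is correct and follows essentially the same route as the paper's proof: the same coarsened coloring $\chi'$ into $\calP(k)$ for the $\le$ direction, verified Borel via Lemma~\ref{crt.lem.Borel->Borel}, and the same product coloring $\xi = (\chi_S \circ q_{AS}, \alpha)$ built from a Borel cross section (Lemma~\ref{crt.lem.borel-cross-section}) for the $\ge$ direction. The only cosmetic differences are that the paper verifies Borelness of $\chi'$ by first pushing each color class $X_j$ forward to $W_j = q_{AS}(X_j)$ and then taking Boolean combinations in the quotient, and that the paper treats the case $T^\frakG(A,S) = \infty$ separately via pullback along $q_{AS}$, whereas you handle all $n \le T^\frakG(A,S)$ uniformly; both approaches are equivalent.
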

\begin{proof}
  Let $q_{AS} : \hom(A, S) \to \subobj{\frakG}AS : f \mapsto f / \Boxed{\sim_\frakG}$ be the quotient map.
  Recall that $q_{AS}$ is continuous and open.

  $(\le)$ If $T^\frakG(A, S) = \infty$ the statement is trivially true.
  Assume, therefore, that $T^\frakG(A, S) = n$ for some $n \in \NN$.
  Take any $k \in \NN$. Let $\chi : \hom(A, S) \to k$ be a Borel coloring.
  Construct $\chi' : \subobj \frakG AS \to \calP(k)$ as follows:
  $$
    \chi'(f / \Boxed{\sim_\frakG}) = \{ \chi(g) : g \in f / \Boxed{\sim_\frakG}\}
  $$
  and let us show that this is a Borel coloring. 
  By Lemma~\ref{crt.lem.Borel->Borel}, for every Borel set $W$ in $\hom(A, S)$
  we have that $q_{AB}(W)$ is Borel in $\subobj \frakG AS$.
  Let $X_j = \chi^{-1}(j)$, $j < k$, be the Borel partition of $\hom(A, S)$
  induced by~$\chi$, and let $W_j = q_{AB}(X_j)$, $j < k$. Clearly,
  $\UNION_{j < k} W_j = \subobj \frakG AS$ and all $W_j$'s are Borel in $\subobj \frakG AS$ as we have just argued.
  Now let $C \in \calP(k)$ be a set of colors. Then, by construction of $\chi'$,
  $$
    (\chi')^{-1}(C) = \Big(\bigcap_{i \in C} W_i\Big) \setminus \Big(\bigcup_{j \in k \setminus C} W_j\Big),
  $$
  which is clearly a Borel set.
  Since $T^\frakG(A, S) = n$ we have that $S \overset\frakG\Borelarrow (S)^{A}_{2^k, n}$,
  so there is a $w \in \hom(S, S)$ such that $|\chi'(w \cdot \subobj \frakG AS)| \le n$.
  It is easy to see that
  $|\chi'(w \cdot \subobj \frakG AS)| \le n$ implies $|\chi(w \cdot \hom(A, S))| \le n \cdot |G_A|$,
  proving thus that $T(A, S) \le n \cdot |G_A|$.

  $(\ge)$
  Assume, first, that $T^\frakG(A, S) = \infty$ and let us show that $T(A, S) = \infty$ by showing that
  $T(A, S) \ge n$ for every $n \in \NN$. Fix an $n \in \NN$. Since $T^\frakG(A, S) = \infty$,
  there is a $k \in \NN$ and a Borel coloring
  $\chi : \subobj \frakG AS \to k$ such that for every $w : S \to S$ we have that
  $|\chi(w \cdot \subobj \frakG AS)| \ge n$. Then the coloring $\chi' : \hom(A, S) \to k$ defined by
  $\chi'(f) = \chi(f / \Boxed{\sim_\frakG}) = \chi(q_{AS}(f))$ is clearly a Borel coloring and
  has the property that $|\chi'(w \cdot \hom(A, S))| \ge n$.
  
  Finally, assume that $T^\frakG(A, S) = n \in \NN$.
  Then $T^\frakG(A, S) \ge n$ so there is a $k \in \NN$ and
  a Borel coloring $\chi : \subobj \frakG AS \to k$ with the property that for every $w \in \hom(S, S)$ we have that
  $|\chi(w \cdot \subobj \frakG AS)| \ge n$.
  By Lemma~\ref{crt.lem.borel-cross-section} the action $G_A \times \hom(A, S) \to \hom(A, S) : 
  (\alpha, f) \mapsto f \cdot \alpha$ has a Borel cross section~$Z$.
  For $\alpha \in G_A$ let
  $$
    r_\alpha : \hom(A, S) \to \hom(A, S) : f \mapsto f \cdot \alpha
  $$
  denote the right translation by $\alpha$. Since $G_A$ acts on $\hom(A, S)$ by homeomorphisms
  each $r_\alpha$ is a homeomorphism and, hence, takes a Borel set onto a Borel set. Let
  $G_A = \{\alpha_0, \ldots, \alpha_{m-1}\}$ where $m = |G_A|$ and let
  $Z_i = r_{\alpha_i}(Z)$, $i < m$.
  Each $Z_i$ is a Borel cross section and $i \ne j \Rightarrow Z_i \cap Z_j = \0$.
  Moreover, $\bigcup_{i < m} Z_i = \hom(A, S)$, so $\{Z_i : i < m\}$ is a Borel partition of $\hom(A, S)$.
  Define $p : \hom(A, S) \to m$ so that
  $$
    \chi(f) = i \text{ if and only if } f \in Z_i, \quad i < n,
  $$
  and consider the coloring
  $$
    \chi' : \hom(A, S) \to k \times m : f \mapsto (\chi(f / \Boxed{\sim_\frakG}), p(f)).
  $$
  To see that $\chi'$ is a Borel coloring note that for $i < k$ and $j < m$:
  $$
    (\chi')^{-1}(i, j) = q^{-1}_{AS}(\chi^{-1}(i)) \cap Z_j
  $$
  which is clearly a Borel set. Now, take any $w \in \hom(S, S)$. Since
  $|\chi(w \cdot \subobj \frakG AS)| \ge n$, it easily follows that
  $|\chi'(w \cdot \hom(A, S))| \ge n \cdot m$
  proving that $T(A, S) \ge n \cdot |G_A|$. This completes the proof.
\end{proof}

The objects $C, D \in \Ob(\CC)$ of a category $\CC$ are \emph{hom-equivalent in $\CC$}
if $C \to D$ and $D \to C$.

\begin{COR}
  Let $C, D \in \Ob(\CC)$ be hom-equivalent objects in a category $\CC$ enriched over $\Top$ whose morphisms are mono,
  and let $\frakG = (G_A)_{A \in \Ob(\CC)}$ be a sequence of groups indexed by the objects of $\CC$ chosen so that $G_A \le \Aut(A)$
  for all $A \in \Ob(\CC)$. Let $A \in \Ob(\CC)$ be chosen so that $A \to C$ (and, then also $A \to D$).

  $(a)$ $T(A, C) = T(A, D)$.
  
  $(b)$ If $\hom(A, S)$ is locally compact second countable Hausdorff for every $S \in \Ob(\CC)$
  and $G_A$ is a finite discrete group then $T^\frakG(A, C) = T^\frakG(A, D)$.
\end{COR}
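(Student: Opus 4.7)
The plan is to derive both parts as a direct consequence of Lemma~\ref{crt.lem.easy-borel}, which already encodes the two monotonicity properties we need: part $(a)$ of the lemma enlarges the ambient object along an outgoing morphism, and part $(b)$ replaces the ``middle'' (witness) object by one that maps into it. By hom-equivalence we have morphisms $p : C \to D$ and $q : D \to C$, which is exactly what is needed to move back and forth between $C$ and $D$.

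To prove part $(b)$, by symmetry it suffices to show $T^\frakG(A, D) \le T^\frakG(A, C)$. Assume $T^\frakG(A, C) = n$ for some $n \in \NN$ (the case $T^\frakG(A, C) = \infty$ is vacuous). Then $C \overset\frakG\Borelarrow (C)^A_{k, n}$ for every $k \ge 2$. One application of Lemma~\ref{crt.lem.easy-borel}$(a)$ along $p : C \to D$ gives $D \overset\frakG\Borelarrow (C)^A_{k, n}$, and then one application of Lemma~\ref{crt.lem.easy-borel}$(b)$ along $q : D \to C$ gives $D \overset\frakG\Borelarrow (D)^A_{k, n}$. Since this holds for every $k \ge 2$, we conclude $T^\frakG(A, D) \le n$, and swapping the roles of $C$ and $D$ yields the reverse inequality.

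Part $(a)$ is the special case $\frakG = \frakI = (\{\id_A\})_{A \in \Ob(\CC)}$, because $T(A, \cdot) = T^\frakI(A, \cdot)$ by definition. Observe that the topological hypotheses listed in $(b)$ do not enter the argument: Lemma~\ref{crt.lem.easy-borel} is purely formal, relying only on continuity of composition in the enrichment so that the pulled-back colorings remain Borel. Consequently I do not foresee any real obstacle; the corollary is a short diagrammatic consequence of the lemma, and the only thing to keep track of is matching the placeholder objects in the statement of Lemma~\ref{crt.lem.easy-borel} with our $C$, $D$, and the morphisms $p$, $q$ in the right order.
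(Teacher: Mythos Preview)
Your argument is correct, but it inverts the order the paper chooses. The paper first proves part~$(a)$ directly from Lemma~\ref{crt.lem.easy-borel} (applied with $\frakG = \frakI$), exactly as you do for general $\frakG$: from $C \Borelarrow (C)^A_{k,t}$ and the two morphisms witnessing hom-equivalence one obtains $D \Borelarrow (D)^A_{k,t}$, giving $T(A,D) \le T(A,C)$, and symmetry finishes. For part~$(b)$, however, the paper does \emph{not} rerun the lemma with general $\frakG$; instead it invokes Proposition~\ref{rdbas.prop.big}, which gives $T(A,S) = |G_A| \cdot T^\frakG(A,S)$ under the locally compact second-countable Hausdorff hypothesis and finiteness of $G_A$, and then reads off $T^\frakG(A,C) = T^\frakG(A,D)$ from $T(A,C) = T(A,D)$.

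Your route---running Lemma~\ref{crt.lem.easy-borel} once for arbitrary $\frakG$ and specializing to $\frakI$ for part~$(a)$---is more economical and, as you observe, shows that the topological hypotheses in the statement of~$(b)$ are not actually needed for the conclusion: the lemma is stated and proved for general $\frakG$ with no extra assumptions on the hom-sets. The paper's detour through Proposition~\ref{rdbas.prop.big} is what imports those hypotheses into~$(b)$. So your proof is a genuine simplification of~$(b)$ and yields a slightly stronger statement.
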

\begin{proof}
  $(a)$
  Take any $A \in \Ob(\CC)$ and let $t = T(A, C) < \infty$.
  Let $k \in \NN$ be an arbitrary integer. Then $C \Borelarrow(C)^A_{k, t}$.
  Since $C$ and $D$ are hom-equivalent we have that $C \to D$ and $D \to C$,
  so by Lemma~\ref{crt.lem.easy-borel} we have that
  $D \Borelarrow(D)^A_{k, t}$. Therefore, $T(A, D) \le t = T(A, C)$.
  By the same argument $T(A, C) \le T(A, D)$.
  Assume, now that $T(A, C) = \infty$. Then $T(A, D) = \infty$, for, otherwise, the argument above
  would force $T(A, C) < \infty$.

  $(b)$ follows from $(a)$ and Proposition~\ref{rdbas.prop.big}.
\end{proof}

\section{Compactness and approximability}
\label{gfrd.sec.compactness}

In this section we prove a general statement which is a common generalization of a
family of statements of the form
``infinite Ramsey property implies the corresponding finite Ramsey property''.
Using the language of Ramsey degrees these statements take the form
  ``small Ramsey degree $\le$ appropriate big Ramsey degree''.

We start with a statement from~\cite{Zucker-2} that in case of classes of structures
small Ramsey degrees are not greater than the corresponding big Ramsey degrees (computed in an
appropriate \Fraisse\ limit). The result follows immediately from a compactness principle
(Theorem~\ref{akpt.lem->thm.ramseyF-part12}) which we now present.

Let $\DD$ be a full subcategory of $\CC$. Then $S \in \Ob(\CC)$ is \emph{universal for $\DD$} if
$D \to S$ for every $D \in \Ob(\DD)$, and it is \emph{weakly locally finite for $\DD$}~\cite{masul-kpt} if
for every $A, B \in \Ob(\DD)$ and every $e \in \hom(A, S)$, $f \in \hom(B, S)$ there exist $D \in \Ob(\DD)$,
$r \in \hom(D, S)$, $p \in \hom(A, D)$ and $q \in \hom(B, D)$ such that $r \cdot p = e$ and $r \cdot q = f$:
\begin{center}
    \begin{tikzcd}
      D \arrow[rr, "r"] & & S & &  \\
      A \arrow[u, "p"] \arrow[urr, "e"' near start] & & B \arrow[ull, "q" near start] \ar[u, "f"']
    \end{tikzcd}
\end{center}
The following compactness result holds for the discrete enrichment
and in case $S$ is universal and weakly locally finite for $\DD$.

\begin{THM}[Compactness for weakly locally finite objects]\label{akpt.lem->thm.ramseyF-part12} (\cite{masul-kpt,masul-rdbas})
  Let $\DD$ be a full subcategory of a locally small category $\CC$ with discrete enrichment.
  Assume that homsets in $\DD$ are finite. Fix an $S \in \Ob(\CC)$ which is universal and weakly locally finite
  for $\DD$. The following are equivalent for all $t \ge 2$ and all $A \in \Ob(\DD)$:
  \begin{enumerate}\def\labelenumi{(\arabic{enumi})}
  \item
    $t_{\DD}(A) \le t$;
  \item
    $S \longrightarrow (B)^A_{k, t}$ for all $k \in \NN$ and $B \in \Ob(\DD)$ such that $A \to B$.
  \end{enumerate}
  Consequently, $t_\DD(A) \le T_\CC(A, S)$ for every $A \in \Ob(\DD)$.
\end{THM}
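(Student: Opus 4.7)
The plan is to prove the equivalence $(1) \Leftrightarrow (2)$, from which the consequence $t_\DD(A) \le T_\CC(A, S)$ falls out at once. The direction $(1) \Rightarrow (2)$ is straightforward: given $k \in \NN$ and $B \in \Ob(\DD)$ with $A \to B$, assumption $(1)$ supplies a $C \in \Ob(\DD)$ with $C \longrightarrow (B)^A_{k, t}$. Since $S$ is universal for $\DD$, we may pick a morphism $e \in \hom(C, S)$. For any coloring $\chi : \hom(A, S) \to k$, pull back to $\chi' : \hom(A, C) \to k$ via $\chi'(f) = \chi(e \cdot f)$ and apply the Ramsey property of $C$ to obtain $w \in \hom(B, C)$ with $|\chi'(w \cdot \hom(A, B))| \le t$; then $e \cdot w$ witnesses $(2)$. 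The consequence is then immediate: if $T_\CC(A, S) \le t$ then $S \longrightarrow (S)^A_{k, t}$ for every $k$, and Lemma~\ref{crt.lem.easy-borel}(b) together with universality ($B \to S$ for every $B \in \Ob(\DD)$) yields $S \longrightarrow (B)^A_{k, t}$, i.e., $(2)$ holds with bound $t$, whence the equivalence gives $t_\DD(A) \le t$.

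For the harder direction $(2) \Rightarrow (1)$ I would argue by contradiction via compactness of the product space $k^{\hom(A, S)}$. Suppose $t_\DD(A) > t$ and fix a witnessing pair $B \in \Ob(\DD)$, $k \in \NN$ such that for every $C \in \Ob(\DD)$ there exists a ``bad'' coloring $\chi_C : \hom(A, C) \to k$ with $|\chi_C(w \cdot \hom(A, B))| > t$ for every $w \in \hom(B, C)$. For each $w \in \hom(B, S)$ the set $F_w = \{\chi \in k^{\hom(A, S)} : |\chi(w \cdot \hom(A, B))| > t\}$ is clopen, because $\hom(A, B)$ is finite and so $F_w$ depends on only finitely many coordinates. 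Since $k^{\hom(A, S)}$ is compact by Tychonoff, producing a $\chi \in \bigcap_{w} F_w$ — which would directly contradict $(2)$ — reduces to verifying the finite intersection property.

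Given $w_1, \ldots, w_n \in \hom(B, S)$, iterating the weak local finiteness of $S$ for $\DD$ produces $D \in \Ob(\DD)$, $r \in \hom(D, S)$, and factorizations $w_i = r \cdot w'_i$ with $w'_i \in \hom(B, D)$. Pick a bad coloring $\chi_D : \hom(A, D) \to k$ and transfer it to $\chi : \hom(A, S) \to k$ by setting $\chi(r \cdot g) = \chi_D(g)$ on $r \cdot \hom(A, D)$ and extending arbitrarily elsewhere; then $\chi(w_i \cdot \hom(A, B)) = \chi_D(w'_i \cdot \hom(A, B))$, which contains more than $t$ colors, so $\chi \in \bigcap_i F_{w_i}$, as required. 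The subtle point — and the main obstacle — is the well-definedness of the transfer: one needs $r \cdot g = r \cdot g'$ to force $\chi_D(g) = \chi_D(g')$. This is automatic when the morphisms of $\CC$ are mono, which is the standing hypothesis under which the cited results \cite{masul-kpt, masul-rdbas} are proved; in the absence of left cancellability one must instead fix, in advance, a canonical factorization for each element of the finite set $\bigcup_i w_i \cdot \hom(A, B)$ and verify by hand that the badness inequality still propagates to $\chi$, which is the only technically delicate point in an otherwise routine compactness argument.
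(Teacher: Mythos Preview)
The paper does not supply its own proof of this theorem, only the citations \cite{masul-kpt,masul-rdbas}; your argument is the standard compactness proof from those sources and is correct under the hypothesis that morphisms in $\CC$ are mono, which is indeed the standing assumption there (and in the paper's subsequent compactness results, Corollary~\ref{gfrd.cor.omega-small} and Theorem~\ref{gfrd.thm.COMPACTNESS-0}).

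One small correction to your closing remark: the proposed workaround for the non-mono case does not actually succeed. If $r \cdot w'_i \cdot f = r \cdot w'_j \cdot f'$ in $\hom(A,S)$ while $\chi_D(w'_i \cdot f) \ne \chi_D(w'_j \cdot f')$, then no choice of canonical preimages will give a $\chi$ satisfying $\chi(w_i \cdot \hom(A,B)) = \chi_D(w'_i \cdot \hom(A,B))$ simultaneously for all~$i$; the best you can guarantee is that $\chi(w_i \cdot \hom(A,B))$ is a \emph{quotient} of $\chi_D(w'_i \cdot \hom(A,B))$, which may collapse below $t+1$ colors. So the theorem as printed should simply be read with the mono hypothesis inherited from the cited references, exactly as you surmise, rather than patched by a factorization trick.
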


This variant of compactness suffices to infer that for every \Fraisse\ class $\KK$ of finite relational structures
and every $A \in \KK$ we have that $t(A) \le T(A, S)$ where $S$ is the \Fraisse\ limit of $\KK$.
However, it does not apply to the setting where homsets are enriched by nontrivial topologies
and big Ramsey degrees are computed with respect to Borel colorings. In particular, it does not apply to dual Ramsey degrees.

A specialization of the above statement is motivated by the theory of projective \Fraisse\ limits by
Irwin and Solecki~\cite{irwin-solecki} and handles both ``direct'' and projective \Fraisse\ limits.
Recall that a sequence in a category $\DD$ is any functor $X : \omega \to \DD$ which we choose
to write as $(X_n, x_n^m)_{n \le m \in \omega}$ where $X_n = X(n) \in \Ob(\DD)$
and $x_n^m \in \hom_\DD(X_n, X_m)$ is the image under $X$ of the only morphism $n \to m$.
Assume that $\DD$ sits as a full subcategory in a larger category $\CC$ and assume that
a sequence $(X_n, x_n^m)_{n \le m \in \omega}$ has a colimit $(S, \sigma_n)_{n \in \omega}$ in $\CC$,
where $\sigma_n : X_n \to S$ are the canonical morphisms, $n \in \omega$.
The colimit $(S, \sigma_n)_{n \in \omega}$ is \emph{$\omega$-small} if for every $A \in \Ob(\DD)$
and every $f \in \hom_\CC(A, S)$ there is an $n \in \omega$ and $g_n \in \hom_\DD(A, X_n)$
such that $f = \sigma_n \cdot g_n$:
\begin{center}
  \begin{tikzcd}
    A \arrow[d, "g_n"'] \arrow[dr, "f"] & \\
    X_n \arrow[r, "\sigma_n"'] & S
  \end{tikzcd}
\end{center}

\begin{COR}[Compactness for $\omega$-small colimits]\label{gfrd.cor.omega-small}
  Let $\CC$ be a category whose morphisms are mono and with the Hausdorff enrichment over $\Top$.
  Let $\DD$ be a full subcategory of $\CC$ such that $\hom(A, B)$ is finite for all $A, B \in \Ob(\DD)$.
  Let $(X_n, x_n^m)_{n \le m \in \omega}$ be a sequence in $\DD$ with the colimit $(S, \sigma_n)_{n \in \omega}$ in $\CC$
  which is $\omega$-small and universal for $\DD$. Then the following are equivalent for every $A \in \Ob(\DD)$ and every $t \in \NN$:
  \begin{enumerate}\def\labelenumi{(\arabic{enumi})}
    \item $t_\DD(A) \le t$;
    \item $S \Borelarrow (B)^A_{k,t}$ for all $k \in \NN$ and all $B \in \Ob(\DD)$ such that $A \to B$.
  \end{enumerate}
  Consequently, $t_\DD(A) \le T_\CC(A, S)$ for every $A \in \Ob(\DD)$.
\end{COR}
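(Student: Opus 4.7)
The plan is to establish the equivalence $(1) \Leftrightarrow (2)$ directly; the inequality $t_\DD(A) \le T_\CC(A, S)$ then follows immediately, since if $T_\CC(A, S) = t$ then $S \Borelarrow (S)^A_{k,t}$ for every $k$, and combining this with $B \to S$ (universality) via Lemma~\ref{crt.lem.easy-borel}$(b)$ upgrades it to condition~$(2)$. The crucial preliminary observation is that $\omega$-smallness together with finiteness of homsets in $\DD$ forces $\hom(A, S) = \bigcup_{n \in \omega} \sigma_n \cdot \hom(A, X_n)$ to be countable; since the enrichment of $\CC$ is Hausdorff, every singleton in $\hom(A, S)$ is closed, hence every subset is Borel. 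Consequently every coloring $\chi : \hom(A, S) \to k$ is automatically Borel, and the ``Borel'' qualifier in $(2)$ imposes no real restriction here.

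The direction $(1) \Rightarrow (2)$ is straightforward. Given $k$ and $B$ with $A \to B$, pick a witness $C \in \Ob(\DD)$ with $C \longrightarrow (B)^A_{k,t}$ in $\DD$ and a morphism $\rho : C \to S$ supplied by universality. A Borel coloring $\chi : \hom(A, S) \to k$ pulls back to $\chi' : f \mapsto \chi(\rho \cdot f)$ on $\hom(A, C)$, and a witness $w' \in \hom(B, C)$ with $|\chi'(w' \cdot \hom(A, B))| \le t$ pushes forward to $w = \rho \cdot w' \in \hom(B, S)$.

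The main work is the compactness direction $(2) \Rightarrow (1)$. Arguing by contradiction, assume $t_\DD(A) > t$ and fix $k \in \NN$ and $B \in \Ob(\DD)$ with $A \to B$ such that $X_n \not\longrightarrow (B)^A_{k,t}$ for every $n$. Call a coloring $\chi_n : \hom(A, X_n) \to k$ \emph{bad} if no $w \in \hom(B, X_n)$ satisfies $|\chi_n(w \cdot \hom(A, B))| \le t$. A short check using the identity $\chi_n(x_m^n \cdot v \cdot f) = (\chi_n \circ \ell_{x_m^n})(v \cdot f)$ shows that if $\chi_n$ is bad and $m \le n$, then the restriction $\chi_n \circ \ell_{x_m^n}$ is bad at level $m$; hence the bad colorings form a finitely branching, everywhere nonempty tree under the restriction relation, and K\"onig's Lemma supplies a coherent branch $(\chi_n)_{n \in \omega}$.

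This branch glues to a global coloring $\chi : \hom(A, S) \to k$ via the recipe $f = \sigma_n \cdot g \mapsto \chi_n(g)$; well-definedness uses that $\CC$ has mono morphisms, so that each $\sigma_n$ is mono and the factorization $g$ of $f$ through $X_n$ is unique (and compatible as $n$ varies, by coherence of the branch together with the colimit cone identity $\sigma_n = \sigma_m \cdot x_n^m$ for $n \le m$). By the preliminary observation $\chi$ is Borel, so $(2)$ yields $w \in \hom(B, S)$ with $|\chi(w \cdot \hom(A, B))| \le t$; writing $w = \sigma_n \cdot w'$ by $\omega$-smallness then gives $|\chi_n(w' \cdot \hom(A, B))| \le t$, contradicting the badness of $\chi_n$. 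The main obstacle will be the K\"onig's Lemma bookkeeping — verifying that restriction preserves badness and that the glued coloring is well defined — both of which hinge on the hypothesis that morphisms of $\CC$ are mono.
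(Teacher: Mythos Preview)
Your proof is correct and follows essentially the same compactness strategy as the paper. The paper's own argument is much terser: it observes that $\omega$-small colimits are weakly locally finite, that $\hom(A,S)$ is countable Hausdorff so every coloring is Borel, and then simply invokes the argument of \cite[Lemma~3.4]{masul-kpt} (i.e.\ the machinery behind Theorem~\ref{akpt.lem->thm.ramseyF-part12}). Your write-up instead unfolds that machinery directly along the colimit sequence $(X_n)$ via K\"onig's Lemma on the tree of bad colorings. The substance is the same K\"onig-style compactness, only you have made it self-contained rather than routing through the weakly-locally-finite abstraction; the one small cost is that your gluing step genuinely needs the mono hypothesis (for uniqueness of the factorization $f = \sigma_n \cdot g$), whereas the paper's route through weak local finiteness does not.
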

\begin{proof}
  Note, first, that $\omega$-small colimits are weakly locally finite.
  Note also that $\hom(A, S)$ is a countable Hausdorff space for every $A \in \Ob(\DD)$
  since $(S, \sigma_n)_{n \in \omega}$ is $\omega$-small and the homsets in $\DD$ are finite.
  Therefore, every coloring $\chi : \hom(A, S) \to k$ is trivially a Borel coloring and the argument of \cite[Lemma 3.4]{masul-kpt} applies.
  Finally, Lemma~\ref{crt.lem.easy-borel} and the implication $(2) \Rightarrow (1)$ prove that
  $t_\DD(A) \le T_\CC(A, S)$ for every $A \in \Ob(\DD)$.
\end{proof}

Let us briefly recall the setup for the theory of projective \Fraisse\ limits of Irwin and Solecki~\cite{irwin-solecki}.
For a relational language $L$ a \emph{topological $L$-structure} is a zero-dimensional compact
second countable space endowed with closed relations that interpret symbols from~$L$.
An \emph{epimorphism} from a topological $L$-structure $\calA$ onto a topological $L$-structure $\calB$ is a continuous surjective
function $f : A \to B$ such that for any $R \in L$ and all $y_1, \ldots, y_n \in B$ we have that
$(y_1, \ldots, y_n) \in R^\calB$ if and only if there exist $x_1, \ldots, x_n \in A$ such that $f(x_i) = y_i$, $1 \le i \le n$,
and $(x_1, \ldots, x_n) \in R^\calA$. An \emph{isomorphism} of two topological $L$-structures is a bijective epimorphism.
Since the topology on a topological $L$-structure is compact, each isomorphism is a homeomorphism.
A family $\KK$ of finite discrete topological structures is a \emph{projective family}~\cite{irwin-solecki} if
for all $\calA, \calB \in \KK$ there is a $\calC \in \KK$ such that
$\calA \leftarrow \calC \rightarrow \calB$ (\emph{projective joint embedding property}); and
every cospan $\calA \rightarrow \calC \leftarrow \calB$ in $\KK$ completes to a square
{\def\arraystretch{0.75}$\begin{array}{c@{\,}c@{\,}c@{\,}c@{\,}c} & & \calD & & \\ & \swarrow & & \searrow \\ \calA & \rightarrow & \cal C & \leftarrow & \calB \end{array}$}
(\emph{projective amalgamation}).

Let $\KK$ be a class of topological $L$-structures. A topological $L$-structure $\calF$ is
a \emph{projective \Fraisse\ limit of $\KK$}~\cite{irwin-solecki} if the following conditions hold:
\begin{itemize}
  \item for every $\calA \in \KK$ there is an epimorphism from $\calF$ to $\calA$ (projective universality);
  \item for any finite discrete topological space $X$ and any continuous function $f : \calF \to X$
        there is an $\calA \in \KK$, an epimorphism $e : \calF \to \calA$ and a function
        $f' : A \to X$ such that $f = f' \circ e$ (projective hereditary property); and
  \item for any $\calA \in \KK$ and epimorphisms $e_1 : \calF \to \calA$ and $e_2 : \calF \to \calA$
        there exists an isomorphism $\psi : \calF \to \calF$ such that $e_2 = e_1 \circ \psi$ (projective ultrahomogeneity).
\end{itemize}
Then \cite{irwin-solecki} shows that every countable projective \Fraisse\ family of finite topological $L$-structures
has a projective \Fraisse\ limit which is unique up to isomorphism.

\begin{COR}
  Let $L$ be a relational language,
  let $\KK$ be a countable projective \Fraisse\ family of finite topological $L$-structures
  and let $\calF$ be the projective \Fraisse\ limit of $\KK$. Then $t_\KK^\partial(\calA) \le T^\partial_{\KK \cup \{\calF\}}(\calA, \calF)$ for every $\calA \in \KK$.
\end{COR}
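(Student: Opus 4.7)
The plan is to apply Corollary~\ref{gfrd.cor.omega-small} in the opposite category. Let $\CC = (\KK \cup \{\calF\})^\op$ and $\DD = \KK^\op$, and endow each homset with the pointwise convergence topology (viewing a morphism of $\KK \cup \{\calF\}$ as a function between topological spaces). Since every topological $L$-structure in sight is Hausdorff, this yields a Hausdorff enrichment over $\Top$. Morphisms in $\CC$ are mono since their opposites are surjective maps of topological structures, hence right-cancellable. Finally, homsets in $\DD$ are finite, as any set of functions between two finite structures is finite.

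By the definition of a projective \Fraisse\ limit there is an inverse sequence $\cdots \twoheadrightarrow X_2 \twoheadrightarrow X_1 \twoheadrightarrow X_0$ of objects and epimorphisms in $\KK$ such that $\calF = \varprojlim_n X_n$ with canonical projections $\sigma_n : \calF \twoheadrightarrow X_n$. Dualizing, we obtain a sequence $(X_n, x_n^m)_{n \le m \in \omega}$ in $\DD$ together with the cocone $(\calF, \sigma_n^{\op})$ in $\CC$, which plays the role of the colimit for the purpose of applying the corollary. Universality of $\calF$ for $\DD$ is immediate from projective universality of the \Fraisse\ limit: every $\calA \in \KK$ receives an epimorphism from $\calF$, and this epimorphism dualizes to a morphism $\calA \to \calF$ in $\CC$.

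The key point is $\omega$-smallness. Fix $\calA \in \KK$ and a morphism from $\calA$ to $\calF$ in $\CC$, i.e., an epimorphism $f : \calF \twoheadrightarrow \calA$. Since $\calA$ is finite discrete, the preimages $\{f^{-1}(a) : a \in A\}$ form a finite clopen partition of $\calF$. The topology of $\calF$ as an inverse limit of compact second-countable zero-dimensional spaces admits a basis of clopen sets of the form $\sigma_n^{-1}(U)$ for $n \in \omega$ and $U \subseteq X_n$; by compactness of $\calF$ each $f^{-1}(a)$ is a finite union of such basis sets. Choosing $n$ large enough so that all relevant indices are at most $n$, we obtain a function $g_n : X_n \to A$ with $f = g_n \circ \sigma_n$. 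Verifying that $g_n$ is itself an epimorphism of topological $L$-structures is a routine diagram chase: surjectivity is inherited from $f$, and both directions of the relational biconditional follow from $f$ and $\sigma_n$ being structural epimorphisms. Thus $f$ factors through $X_n$ in $\CC$ as required.

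With $\omega$-smallness and universality verified, Corollary~\ref{gfrd.cor.omega-small} yields $t_\DD(\calA) \le T_\CC(\calA, \calF)$ for every $\calA \in \KK$, which unfolds to the claimed $t^\partial_\KK(\calA) \le T^\partial_{\KK \cup \{\calF\}}(\calA, \calF)$. The principal obstacle is the $\omega$-smallness step: the compactness-plus-basis argument factoring $f$ through some $X_n$ is standard for inverse limits of compact second-countable zero-dimensional spaces, but one must additionally check that the resulting factoring function inherits the full epimorphism structure, combining relation-preservation for $f$ with relation-reflection for $\sigma_n$ (and vice versa).
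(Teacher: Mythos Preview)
Your proposal is correct and follows essentially the same approach as the paper: both apply (the dual of) Corollary~\ref{gfrd.cor.omega-small}, with projective universality giving universality and the Irwin--Solecki inverse-limit construction giving $\omega$-smallness. The paper simply cites the construction in~\cite{irwin-solecki} for the latter, whereas you spell out the compactness-and-basis argument factoring an epimorphism $\calF \twoheadrightarrow \calA$ through some $X_n$ and verify that the factor map is again an epimorphism; this extra detail is sound and does no harm. One cosmetic point: Corollary~\ref{gfrd.cor.omega-small} is stated for an actual colimit, and your phrase ``plays the role of the colimit for the purpose of applying the corollary'' is slightly informal---but since the proof of that corollary only uses that $\omega$-small cocones are weakly locally finite (which your $\omega$-smallness argument already establishes), the application goes through as written.
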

\begin{proof}
  From the construction of $\calF$ given in~\cite{irwin-solecki} it follows immediately that $\calF$
  is dually $\omega$-small for $\KK$. Since projective universality of $\calF$ is postulated by the definition,
  the result now follows from the dual of Corollary~\ref{gfrd.cor.omega-small}.
\end{proof}

However, the relationship between small and big dual Ramsey degrees of finite linear orders
(that is, the fact that the the Infinite Dual Ramsey Theorem implies the Finite Dual Ramsey Theorem)
is explained neither by Theorem~\ref{akpt.lem->thm.ramseyF-part12} nor by Corollary~\ref{gfrd.cor.omega-small}.
Our goal now is to present a common generalization of all such phenomena, both ``direct'' and dual.
To this end let us deconstruct the proof of the following well-known fact:

\begin{figure}
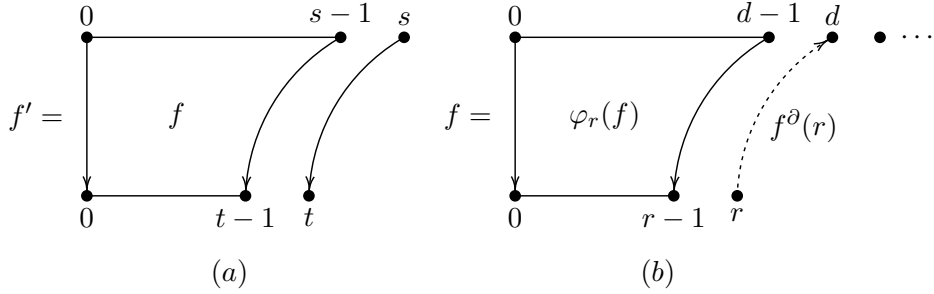

  \centering
\begin{pgfpicture}
  \pgfsetxvec{\pgfpoint{\acadpgfunit}{0pt}}
  \pgfsetyvec{\pgfpoint{0pt}{\acadpgfunit}}
  \pgfsetlinewidth{\acadpgflinewidth}
  \pgftransformshift{\pgfpointxy{62.5}{-75.0}}

  \begin{pgfscope}
    \pgfpathmoveto{\pgfpointxy{75.0}{500.0}}
    \pgfpathlineto{\pgfpointxy{475.0}{500.0}}
    \pgfusepath{stroke}
  \end{pgfscope}
  \begin{pgfscope}
    \pgfpathmoveto{\pgfpointxy{75.0}{250.0}}
    \pgfpathlineto{\pgfpointxy{325.0}{250.0}}
    \pgfusepath{stroke}
  \end{pgfscope}
  \begin{pgfscope}
    \pgfpathmoveto{\pgfpointxy{75.0}{492.0}}
    \pgfpathlineto{\pgfpointxy{75.0}{258.0}}
    \pgfusepath{stroke}
  \end{pgfscope}
  \begin{pgfscope}
    \pgfpathmoveto{\pgfpointxy{81.0289}{280.5}}
    \pgfpatharcaxes{150.0}{180.0}{\pgfpointxy{45.0}{0.0}}{\pgfpointxy{0.0}{45.0}}
    \pgfusepath{stroke}
  \end{pgfscope}
  \begin{pgfscope}
    \pgfpathmoveto{\pgfpointxy{75.0}{258.0}}
    \pgfpatharcaxes{0.0}{30.0}{\pgfpointxy{45.0}{0.0}}{\pgfpointxy{0.0}{45.0}}
    \pgfusepath{stroke}
  \end{pgfscope}
  \begin{pgfscope}
    \pgfpathmoveto{\pgfpointxy{468.126}{495.907}}
    \pgfpatharcaxes{121.533}{176.539}{\pgfpointxy{300.411}{0.0}}{\pgfpointxy{0.0}{300.411}}
    \pgfusepath{stroke}
  \end{pgfscope}
  \begin{pgfscope}
    \pgfpathmoveto{\pgfpointxy{333.335}{279.821}}
    \pgfpatharcaxes{143.402}{176.539}{\pgfpointxy{40.7404}{0.0}}{\pgfpointxy{0.0}{40.7404}}
    \pgfusepath{stroke}
  \end{pgfscope}
  \begin{pgfscope}
    \pgfpathmoveto{\pgfpointxy{325.377}{257.991}}
    \pgfpatharcaxes{-3.46091}{29.6758}{\pgfpointxy{40.7404}{0.0}}{\pgfpointxy{0.0}{40.7404}}
    \pgfusepath{stroke}
  \end{pgfscope}
  \begin{pgfscope}
    \pgfpathmoveto{\pgfpointxy{568.126}{495.907}}
    \pgfpatharcaxes{121.533}{176.539}{\pgfpointxy{300.411}{0.0}}{\pgfpointxy{0.0}{300.411}}
    \pgfusepath{stroke}
  \end{pgfscope}
  \begin{pgfscope}
    \pgfpathmoveto{\pgfpointxy{433.335}{279.821}}
    \pgfpatharcaxes{143.402}{176.539}{\pgfpointxy{40.7404}{0.0}}{\pgfpointxy{0.0}{40.7404}}
    \pgfusepath{stroke}
  \end{pgfscope}
  \begin{pgfscope}
    \pgfpathmoveto{\pgfpointxy{425.377}{257.991}}
    \pgfpatharcaxes{-3.46091}{29.6758}{\pgfpointxy{40.7404}{0.0}}{\pgfpointxy{0.0}{40.7404}}
    \pgfusepath{stroke}
  \end{pgfscope}
  \begin{pgfscope}
    \pgfpathmoveto{\pgfpointxy{750.0}{500.0}}
    \pgfpathlineto{\pgfpointxy{1150.0}{500.0}}
    \pgfusepath{stroke}
  \end{pgfscope}
  \begin{pgfscope}
    \pgfpathmoveto{\pgfpointxy{750.0}{250.0}}
    \pgfpathlineto{\pgfpointxy{1000.0}{250.0}}
    \pgfusepath{stroke}
  \end{pgfscope}
  \begin{pgfscope}
    \pgfpathmoveto{\pgfpointxy{750.0}{492.0}}
    \pgfpathlineto{\pgfpointxy{750.0}{258.0}}
    \pgfusepath{stroke}
  \end{pgfscope}
  \begin{pgfscope}
    \pgfpathmoveto{\pgfpointxy{756.029}{280.5}}
    \pgfpatharcaxes{150.0}{180.0}{\pgfpointxy{45.0}{0.0}}{\pgfpointxy{0.0}{45.0}}
    \pgfusepath{stroke}
  \end{pgfscope}
  \begin{pgfscope}
    \pgfpathmoveto{\pgfpointxy{750.0}{258.0}}
    \pgfpatharcaxes{0.0}{30.0}{\pgfpointxy{45.0}{0.0}}{\pgfpointxy{0.0}{45.0}}
    \pgfusepath{stroke}
  \end{pgfscope}
  \begin{pgfscope}
    \pgfpathmoveto{\pgfpointxy{1143.13}{495.907}}
    \pgfpatharcaxes{121.533}{176.539}{\pgfpointxy{300.411}{0.0}}{\pgfpointxy{0.0}{300.411}}
    \pgfusepath{stroke}
  \end{pgfscope}
  \begin{pgfscope}
    \pgfpathmoveto{\pgfpointxy{1008.33}{279.821}}
    \pgfpatharcaxes{143.402}{176.539}{\pgfpointxy{40.7404}{0.0}}{\pgfpointxy{0.0}{40.7404}}
    \pgfusepath{stroke}
  \end{pgfscope}
  \begin{pgfscope}
    \pgfpathmoveto{\pgfpointxy{1000.38}{257.991}}
    \pgfpatharcaxes{-3.46091}{29.6758}{\pgfpointxy{40.7404}{0.0}}{\pgfpointxy{0.0}{40.7404}}
    \pgfusepath{stroke}
  \end{pgfscope}
  \begin{pgfscope}
    \pgfsetdash{{1.5pt}{2pt}}{0pt}
    \pgfpathmoveto{\pgfpointxy{1243.13}{495.907}}
    \pgfpatharcaxes{121.533}{176.539}{\pgfpointxy{300.411}{0.0}}{\pgfpointxy{0.0}{300.411}}
    \pgfusepath{stroke}
  \end{pgfscope}
  \begin{pgfscope}
    \pgfpathmoveto{\pgfpointxy{1220.68}{489.907}}
    \pgfpatharcaxes{268.397}{301.533}{\pgfpointxy{40.7404}{0.0}}{\pgfpointxy{0.0}{40.7404}}
    \pgfusepath{stroke}
  \end{pgfscope}
  \begin{pgfscope}
    \pgfpathmoveto{\pgfpointxy{1243.13}{495.907}}
    \pgfpatharcaxes{121.533}{154.67}{\pgfpointxy{40.7404}{0.0}}{\pgfpointxy{0.0}{40.7404}}
    \pgfusepath{stroke}
  \end{pgfscope}
  \begin{pgfscope}
    \pgfsetfillcolor{black}
    \pgfpathellipse{\pgfpointxy{75.0}{500.0}}{\pgfpointxy{8.0}{0.0}}{\pgfpointxy{0.0}{8.0}}
    \pgfusepath{fill,stroke}
  \end{pgfscope}
  \begin{pgfscope}
    \pgfsetfillcolor{black}
    \pgfpathellipse{\pgfpointxy{75.0}{250.0}}{\pgfpointxy{8.0}{0.0}}{\pgfpointxy{0.0}{8.0}}
    \pgfusepath{fill,stroke}
  \end{pgfscope}
  \begin{pgfscope}
    \pgfsetfillcolor{black}
    \pgfpathellipse{\pgfpointxy{475.0}{500.0}}{\pgfpointxy{8.0}{0.0}}{\pgfpointxy{0.0}{8.0}}
    \pgfusepath{fill,stroke}
  \end{pgfscope}
  \begin{pgfscope}
    \pgfsetfillcolor{black}
    \pgfpathellipse{\pgfpointxy{325.0}{250.0}}{\pgfpointxy{8.0}{0.0}}{\pgfpointxy{0.0}{8.0}}
    \pgfusepath{fill,stroke}
  \end{pgfscope}
  \begin{pgfscope}
    \pgfsetfillcolor{black}
    \pgfpathellipse{\pgfpointxy{575.0}{500.0}}{\pgfpointxy{8.0}{0.0}}{\pgfpointxy{0.0}{8.0}}
    \pgfusepath{fill,stroke}
  \end{pgfscope}
  \begin{pgfscope}
    \pgfsetfillcolor{black}
    \pgfpathellipse{\pgfpointxy{425.0}{250.0}}{\pgfpointxy{8.0}{0.0}}{\pgfpointxy{0.0}{8.0}}
    \pgfusepath{fill,stroke}
  \end{pgfscope}
  \begin{pgfscope}
    \pgfsetfillcolor{black}
    \pgfpathellipse{\pgfpointxy{750.0}{500.0}}{\pgfpointxy{8.0}{0.0}}{\pgfpointxy{0.0}{8.0}}
    \pgfusepath{fill,stroke}
  \end{pgfscope}
  \begin{pgfscope}
    \pgfsetfillcolor{black}
    \pgfpathellipse{\pgfpointxy{750.0}{250.0}}{\pgfpointxy{8.0}{0.0}}{\pgfpointxy{0.0}{8.0}}
    \pgfusepath{fill,stroke}
  \end{pgfscope}
  \begin{pgfscope}
    \pgfsetfillcolor{black}
    \pgfpathellipse{\pgfpointxy{1150.0}{500.0}}{\pgfpointxy{8.0}{0.0}}{\pgfpointxy{0.0}{8.0}}
    \pgfusepath{fill,stroke}
  \end{pgfscope}
  \begin{pgfscope}
    \pgfsetfillcolor{black}
    \pgfpathellipse{\pgfpointxy{1000.0}{250.0}}{\pgfpointxy{8.0}{0.0}}{\pgfpointxy{0.0}{8.0}}
    \pgfusepath{fill,stroke}
  \end{pgfscope}
  \begin{pgfscope}
    \pgfsetfillcolor{black}
    \pgfpathellipse{\pgfpointxy{1250.0}{500.0}}{\pgfpointxy{8.0}{0.0}}{\pgfpointxy{0.0}{8.0}}
    \pgfusepath{fill,stroke}
  \end{pgfscope}
  \begin{pgfscope}
    \pgfsetfillcolor{black}
    \pgfpathellipse{\pgfpointxy{1100.0}{250.0}}{\pgfpointxy{8.0}{0.0}}{\pgfpointxy{0.0}{8.0}}
    \pgfusepath{fill,stroke}
  \end{pgfscope}
  \begin{pgfscope}
    \pgfsetfillcolor{black}
    \pgfpathellipse{\pgfpointxy{1325.0}{500.0}}{\pgfpointxy{8.0}{0.0}}{\pgfpointxy{0.0}{8.0}}
    \pgfusepath{fill,stroke}
  \end{pgfscope}
  \pgftext[bottom,at={\pgfpointxy{75.0}{520.0}}]{$0$}
  \pgftext[bottom,at={\pgfpointxy{475.0}{520.0}}]{$s-1$}
  \pgftext[bottom,at={\pgfpointxy{575.0}{520.0}}]{$s$}
  \pgftext[top,at={\pgfpointxy{75.0}{230.0}}]{$0$}
  \pgftext[top,at={\pgfpointxy{325.0}{230.0}}]{$t-1$}
  \pgftext[top,at={\pgfpointxy{425.0}{230.0}}]{$t$}
  \pgftext[at={\pgfpointxy{216.017}{376.335}}]{$f$}
  \pgftext[right,at={\pgfpointxy{38.0}{375.0}}]{$f' = $}
  \pgftext[top,at={\pgfpointxy{300.0}{150.5}}]{$(a)$}
  \pgftext[bottom,at={\pgfpointxy{750.0}{520.0}}]{$0$}
  \pgftext[bottom,at={\pgfpointxy{1150.0}{520.0}}]{$d-1$}
  \pgftext[bottom,at={\pgfpointxy{1250.0}{520.0}}]{$d$}
  \pgftext[top,at={\pgfpointxy{750.0}{230.0}}]{$0$}
  \pgftext[top,at={\pgfpointxy{1000.0}{230.0}}]{$r-1$}
  \pgftext[top,at={\pgfpointxy{1100.0}{230.0}}]{$r$}
  \pgftext[at={\pgfpointxy{891.017}{376.335}}]{$\phi_r(f)$}
  \pgftext[right,at={\pgfpointxy{713.0}{375.0}}]{$f = $}
  \pgftext[top,at={\pgfpointxy{975.0}{150.5}}]{$(b)$}
  \pgftext[top,left,at={\pgfpointxy{1150.64}{386.416}}]{$f^\partial(r)$}
  \pgftext[at={\pgfpointxy{1387.5}{500.0}}]{$\cdots$}
\end{pgfpicture}
  \caption{The constructions in the proof that the Infinite Dual Ramsey implies the Finite Dual Ramsey}
  \label{crt.fig.idr01}
\end{figure}

\begin{THM} \cite{carlson-simpson-1984} \label{gfrd.thm.IDRT=>FDRT}
  The Infinite Dual Ramsey Theorem implies the Finite Dual Ramsey Theorem.
\end{THM}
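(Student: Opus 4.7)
Suppose for contradiction that Theorem~\ref{gfrd.thm.FDRT} fails. Then there exist $\ell, m, k \in \NN$ and, for every sufficiently large $n \in \NN$, a ``bad'' coloring $c_n : \RSurj(n, \ell) \to k$ such that no $w \in \RSurj(n, m)$ satisfies $|c_n(\RSurj(m, \ell) \circ w)| = 1$. The plan is to weave these bad colorings into a single Borel coloring $\chi : \RSurj(\omega, \ell) \to k$ and apply Theorem~\ref{gfrd.thm.IDRT} to derive a contradiction.

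For $f \in \RSurj(\omega, \ell)$, let $N(f) = 1 + \max_{i < \ell} \min f^{-1}(i)$, the least $n$ for which $f|_n$ is already surjective onto $\ell$. Set $\chi(f) = c_{N(f)}(f|_{N(f)})$. The stopping time $N$ is locally constant in the pointwise convergence topology on $\RSurj(\omega, \ell)$, so $\chi$ is continuous and in particular Borel. Applying Theorem~\ref{gfrd.thm.IDRT} yields $u \in \RSurj(\omega, \omega)$ and $c \in k$ with $\chi(g \circ u) = c$ for every $g \in \RSurj(\omega, \ell)$.

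Abbreviate $M_n = 1 + \max u|_n$; note that $M_n \to \infty$. Call $n$ \emph{critical} for $u$ if $M_n = M_{n-1} + 1$. Choose $n$ critical and large enough that $M_n \ge m$, let $\pi \in \RSurj(M_n, m)$ be the canonical rigid surjection with $\pi^{-1}(m-1) = \{m-1, \ldots, M_n - 1\}$, and set $w = \pi \circ u|_n \in \RSurj(n, m)$. For each $h \in \RSurj(m, \ell)$, define an extension $g_h \in \RSurj(\omega, \ell)$ by $g_h|_{M_n} = h \circ \pi$ and $g_h(j) = h(m-1)$ for $j \ge M_n$. A direct calculation gives $(g_h \circ u)|_n = h \circ w$, and when $h$ places $\ell - 1$ at position $m - 1$, criticality of $n$ forces $N(g_h \circ u) = n$; hence $c_n(h \circ w) = \chi(g_h \circ u) = c$, contradicting the badness of $c_n$ on $w$.

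The principal obstacle is that the equality $N(g_h \circ u) = n$ must hold \emph{uniformly in} $h$; for those $h$ attaining $\ell - 1$ at some position strictly less than $m - 1$, the value $N(g_h \circ u)$ drops below $n$, so the argument only extracts monochromaticity of some smaller $c_{n'}$ instead of the desired $c_n$. Overcoming this requires either iterating Theorem~\ref{gfrd.thm.IDRT} along a cofinal sequence of critical indices (absorbing each ``shape'' of $h$ in turn) or replacing $\chi$ by a richer coloring that records the tuple $(c_{N(f)+t}(f|_{N(f)+t}))_{0 \le t \le T}$ for a suitable window $T$. Either route is precisely what the approximability framework of Section~\ref{gfrd.sec.compactness} is designed to systematize, and Theorem~\ref{gfrd.thm.COMPACTNESS-0} will subsume this bespoke argument as a special case.
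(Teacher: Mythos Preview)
Your proposal correctly isolates the central obstacle but leaves it unresolved: when the coloring lives on $\RSurj(\omega, \ell)$ with truncation point $N(f) = 1 + \min f^{-1}(\ell-1)$, the value $N(g_h \circ u)$ depends on where $h$ first hits $\ell-1$, so different $h$'s constrain different $c_{n'}$'s rather than a single $c_n$. The fixes you sketch (iteration along critical indices, or a windowed coloring) are heavier than necessary, and deferring to Theorem~\ref{gfrd.thm.COMPACTNESS-0} is circular here since the paper presents this very argument as the motivation for that theorem.

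The paper's remedy is a single clean move: bump the codomain by one and color $\RSurj(\omega, \ell+1)$ rather than $\RSurj(\omega, \ell)$. For $f \in \RSurj(\omega, \ell+1)$ set $d = \min f^{-1}(\ell)$, let $\phi_\ell(f) = \restr{f}{d} \in \RSurj(d, \ell)$, and define $\gamma(f) = c_d(\phi_\ell(f))$. After the Infinite Dual Ramsey Theorem yields a monochromatic $h \in \RSurj(\omega, \omega)$, put $u = \pi_{m+1} \circ h \in \RSurj(\omega, m+1)$ and $n = \min u^{-1}(m)$, so $\phi_m(u) \in \RSurj(n, m)$. For each $f \in \RSurj(m, \ell)$ extend to $f' \in \RSurj(m+1, \ell+1)$ by $f'(m) = \ell$. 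The key identity is $f \circ \phi_m(u) = \phi_\ell(f' \circ u)$, which holds because $\min(f' \circ u)^{-1}(\ell) = \min u^{-1}(m) = n$ is \emph{independent of $f$}. The extra coordinate thus acts purely as a uniform marker for the truncation point, and the obstacle you flagged disappears without any iteration or windowing. This ``$F(A) = A$ plus one marked top element'' is exactly the map $F$ that later drives the approximability framework.
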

\begin{proof} (Sketch)
  For $f \in \RSurj(s, t)$ we define $f' \in \RSurj(s+1, t+1)$ by $f'(j) = f(j)$ for $j < s$, and $f'(s) = t$,
  Fig.~\ref{crt.fig.idr01}~$(a)$. Clearly, if $f$ is a rigid surjection then so is $f'$.
  Next, define $\phi_r : \RSurj(\omega, r+1) \to \bigcup_{s \ge r} \RSurj(s, r)$
  as follows: $\phi_r(f) = \restr{f}{d}$ where $d = f^\partial(r) = \min f^{-1}(r)$, Fig.~\ref{crt.fig.idr01}~$(b)$.
  Finally, let $\pi_m \in \RSurj(\omega, m)$ be the ``canonical rigid surjection'' defined by
  $\pi_m(j) = j$ for $j < m$ and $\pi_m(j) = m - 1$ for $j \ge m$.

  Seeking a contradiction, assume that there are $r, m, k \in \NN$ with the following property:
  $m \ge r$ and for every $n \ge m$ there is a coloring $\chi_n : \RSurj(n, r) \to k$
  such that for every $w \in \RSurj(n, m)$ we have that
  $$
    |\chi_n(\RSurj(m, r) \circ w)| > 1.
  $$
  Define $\gamma : \RSurj(\omega, r+1) \to k$ by
  $$
    \gamma(f) = \chi_d(\phi_r(f)), \text{ where } d = f^\partial(r) = \min f^{-1}(r).
  $$
  This is clearly a Borel coloring so by the Infinite Dual Ramsey Theorem~\ref{gfrd.thm.IDRT}
  there is a color $c \in k$ and an $h \in \RSurj(\omega, \omega)$ such that
  $$
    \gamma(\RSurj(\omega, r+1) \circ h) = \{c\}.
  $$
  Let $u = \pi_{m+1} \circ h$, let $n = u^\partial(s) = \min u^{-1}(s)$ and
  let us show that $\chi_n(\RSurj(m, r) \circ \phi_m(u)) = \{c\}$. Take any $f \in \RSurj(m, r)$.
  The key insight now is that $f \circ \phi_m(u) = \phi_r(f' \circ u)$:
  \begin{center}
    \begin{tikzcd}
      m+1 \arrow[d, "f'"'] & & \omega \arrow[ll, "u"'] \arrow[dll, "f' \circ u"] & & m \arrow[d,"f"'] & & n \arrow[ll, "\phi_m(u)"'] \arrow[dll, "\phi_r(f' \circ u) = f \circ \phi_m(u)"] \\
      r+1                  & &                                                   & & r                & &
    \end{tikzcd}
  \end{center}
  To see that this is indeed the case note that $(f' \circ u)^\partial(r) = u^\partial((f')^\partial(r)) = u^\partial(m) = n$
  and that $\restr{(f' \circ u)}n = f \circ (\restr un)$.
  Finally, $\chi_n(f \circ \phi_m(u)) = \chi_n(\phi_r(f' \circ u)) = \gamma(f' \circ \pi_{m+1} \circ h) = \{c\}$
  because $f' \circ \pi_{m+1} \in \RSurj(\omega, r+1)$. Therefore, 
  $|\chi_n(\RSurj(m, r) \circ \phi_m(u))| = 1$, which contradicts the choice of $\chi_n$.
\end{proof}

Abstracting the above idea leads to the following notion.
Let $\CC$ be a category enriched over $\Top$, let $\DD$ be a full subcategory of $\CC$,
and let $S \in \Ob(\CC)$ be an object in $\CC$. We say that \emph{$S$ is approximable in $\DD$} if there exists a mapping
$F : \Ob(\DD) \to \Ob(\DD)$ and a family of Borel maps indexed by $A \in \Ob(\DD)$:
$$
  \Phi_A : \hom_\CC(F(A), S) \to \bigcup_{C \in \Ob(\DD)} \hom_\DD(A, C)
$$
such that for every $A, B \in \Ob(\DD)$, every $f \in \hom_\DD(A, B)$ and every
$u \in \hom_\CC(F(B), S)$ there is an $f' \in \hom_\DD(F(A), F(B))$ such that
$
  \Phi_A(u \cdot f') = \Phi_B(u) \cdot f
$.
(Note that it is implicit in this requirement that $\Phi_B(u)$ and $\Phi_A(u \cdot f')$ have the same codomain.)
\begin{center}
  \begin{tikzcd}[column sep=large]
    F(B)\arrow[r, "u"]                           & S & & B\arrow[r, "\Phi_B(u)"] & C \\
    F(A)\arrow[u, "f'"]\arrow[ur, "u \cdot f'"'] &   & & A\arrow[u, "f"]\arrow[ur, "\Phi_A(u \cdot f')"'] &   
  \end{tikzcd}
\end{center}
We then say that $S$ is approximable in $\DD$ via $F : \Ob(\DD) \to \Ob(\DD)$ and $(\Phi_A)_{A \in \Ob(\DD)}$.

    Note, first, that $F$ is not required to be a functor: this is just a mapping whose domain and codomain may be proper classes.
    Next, note that the codomain of each $\Phi_A$ may be a proper class of morphisms. Nevertheless, it is a disjoint union of topological spaces
    due to the standard assumption that distinct hom-sets in a category are always disjoint.
    So, we interpret the requirement that $\Phi_A$ be Borel as follows: for every $C \in \Ob(\DD)$ and every open $O \subseteq \hom_\DD(A, C)$
    the inverse image $\Phi_A^{-1}(O)$ is Borel in $\hom_\CC(F(A), S)$.

\begin{THM}[Compactness for approximable objects]\label{gfrd.thm.COMPACTNESS-0}
  Let $\CC$ be a category whose morphisms are mono and with the Hausdorff enrichment over $\Top$,
  let $\DD$ be a full subcategory of $\CC$ which is a skeletal category of finite objects,
  and let $S \in \Ob(\CC)$ be an object universal for $\DD$ and approximable in $\DD$ via $F : \Ob(\DD) \to \Ob(\DD)$
  and $(\Phi_A)_{A \in \Ob(\DD)}$. Fix an $A \in \Ob(\DD)$ and $t \in \NN$.

  $(a)$ If $t_\DD(A) \le t$ then $S \Borelarrow (B)^A_{k,t}$ for all $B \in \Ob(\DD)$ and $k \in \NN$.
  
  $(b)$ If $S \Borelarrow (F(B))^{F(A)}_{k,t}$ for all $B \in \Ob(\DD)$ and $k \in \NN$, then $t_\DD(A) \le t$.

  \noindent
  In particular, $t_\DD(A) \le T_\CC(F(A), S)$ for every $A \in \Ob(\DD)$.
\end{THM}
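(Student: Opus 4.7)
The plan is to treat $(a)$ as a routine transfer from the finite Ramsey property in $\DD$ to $\CC$ via the universality of $S$, and to prove $(b)$ by a contradiction-style compactness argument in which the family $(\Phi_A)_{A \in \Ob(\DD)}$ is used to assemble a single Borel coloring of $\hom_\CC(F(A),S)$ out of a coherent family of putative ``bad'' colorings on the finite homsets of $\DD$.

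For $(a)$, fix $B \in \Ob(\DD)$ and $k \in \NN$. Since $t_\DD(A) \le t$, there is a $C \in \Ob(\DD)$ witnessing $C \to (B)^A_{k,t}$ inside $\DD$. Universality gives some $e \in \hom_\CC(C,S)$. Given a Borel coloring $\chi : \hom_\CC(A,S) \to k$, restrict by $\chi'(g) = \chi(e \cdot g)$ on the finite (hence automatically Borel) set $\hom_\DD(A,C)$ and apply the finite Ramsey property to obtain $w \in \hom_\DD(B,C)$ with $|\chi'(w \cdot \hom_\DD(A,B))| \le t$. Fullness of $\DD$ in $\CC$ identifies $\hom_\DD(A,B)$ with $\hom_\CC(A,B)$, so $e \cdot w \in \hom_\CC(B,S)$ witnesses $S \Borelarrow (B)^A_{k,t}$.

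For $(b)$, assume the Borel infinite hypothesis and suppose for contradiction that $t_\DD(A) > t$. Pick $k \in \NN$ and $B \in \Ob(\DD)$ such that for every $C \in \Ob(\DD)$ there is a coloring $\chi_C : \hom_\DD(A,C) \to k$ with $|\chi_C(w \cdot \hom_\DD(A,B))| > t$ for every $w \in \hom_\DD(B,C)$. Define $\chi : \hom_\CC(F(A),S) \to k$ by $\chi(u) = \chi_{C_u}(\Phi_A(u))$, where $C_u = \cod(\Phi_A(u))$. To check Borelness, note that the sets $U_C = \Phi_A^{-1}(\hom_\DD(A,C))$, with $C$ ranging over the countably many objects of the skeleton $\DD$, form a Borel partition of $\hom_\CC(F(A),S)$, and on each $U_C$ the map $\chi = \chi_C \circ \Phi_A$ takes only finitely many values, each color class being $\Phi_A^{-1}(\chi_C^{-1}(j))$ which is Borel since $\Phi_A$ is Borel and $\chi_C^{-1}(j)$ is a finite (discrete) set. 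Applying the hypothesis $S \Borelarrow (F(B))^{F(A)}_{k,t}$ produces $w \in \hom_\CC(F(B),S)$ with $|\chi(w \cdot \hom_\DD(F(A),F(B)))| \le t$. Setting $v = \Phi_B(w) \in \hom_\DD(B,D)$, the defining property of approximability supplies, for each $f \in \hom_\DD(A,B)$, an $f' \in \hom_\DD(F(A),F(B))$ with $\Phi_A(w \cdot f') = v \cdot f$, so $\chi_D(v \cdot f) = \chi(w \cdot f')$; consequently $\chi_D(v \cdot \hom_\DD(A,B)) \subseteq \chi(w \cdot \hom_\DD(F(A),F(B)))$, a set of size $\le t$, contradicting the choice of $\chi_D$.

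The ``in particular'' clause is immediate: if $T_\CC(F(A),S) = t$, then $S \Borelarrow (S)^{F(A)}_{k,t}$ for all $k \ge 2$, and Lemma~\ref{crt.lem.easy-borel}$(b)$ combined with the universality $F(B) \to S$ upgrades this to $S \Borelarrow (F(B))^{F(A)}_{k,t}$ for every $B \in \Ob(\DD)$, so $(b)$ gives $t_\DD(A) \le t$. The step I expect to be the main obstacle is the Borelness of the assembled coloring $\chi$; it works cleanly only because $\DD$ is a skeletal category of finite objects, making the partition $\{U_C : C \in \Ob(\DD)\}$ countable and each $\chi_C$ a function into a finite discrete space --- this is precisely the point where the ``category of finite objects'' hypothesis is being spent.
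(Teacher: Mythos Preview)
Your proof is correct and follows essentially the same approach as the paper: part~$(a)$ is the routine transfer via universality (the paper phrases it as an appeal to Lemma~\ref{crt.lem.easy-borel}, which you unpack by hand), part~$(b)$ is the identical contradiction argument assembling the Borel coloring $\gamma(h)=\chi_{\cod\Phi_A(h)}(\Phi_A(h))$ and using approximability to push the contradiction through, and the ``in particular'' clause is derived exactly as in the paper. Your Borelness justification is in fact slightly more detailed than the paper's one-line remark.
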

\begin{proof}
  Fix an $A \in \Ob(\DD)$ and $t \in \NN$.

  $(a)$
  Let $k \in \NN$ and $B \in \Ob(\DD)$ be arbitrary. Since $t_\DD(A) \le t$ there is a $C \in \Ob(\DD)$ such that
  $C \longrightarrow (B)^A_{k,t}$. All the hom-sets in $\DD$ are finite discrete spaces, so
  $C \Borelarrow (B)^A_{k,t}$ trivially. The conclusion now follows by Lemma~\ref{crt.lem.easy-borel} because $S$ is universal for $\DD$.

  $(b)$
  By way of contradiction, suppose that $S \Borelarrow (F(B))^{F(A)}_{k,t}$ for all $B \in \Ob(\DD)$ and $k \in \NN$,
  but $t_\DD(A) > t$. Since $t_\DD(A) > t$, there exist $k \in \NN$ and $B \in \Ob(\DD)$ such that for every $C \in \Ob(\DD)$
  one can find a coloring $\chi_C : \hom_\DD(A, C) \to k$ satisfying
  \begin{equation}\label{gfrd.eq.main-proof-1}
    |\chi_C(w \cdot \hom_\DD(A, B))| > t \text{ for every } w \in \hom_\DD(B, C).
  \end{equation}
  Define $\gamma : \hom_\CC(F(A), S) \to k$ by
  $$
    \gamma(h) = \chi_C(\Phi_A(h)), \text{ where $C$ is the codomain of $\Phi_A(h)$}.
  $$
  Note that this is a Borel coloring because hom-sets in $\DD$ are finite discrete spaces, $\DD$ has countably many objects and
  $(\Phi_A)_{A \in \Ob(\DD)}$ is a family of Borel maps. Since $S \Borelarrow (F(B))^{F(A)}_{k,t}$,
  there is a $w \in \hom_\CC(F(B), S)$ such that
  $$
    |\gamma(w \cdot \hom_\CC(F(A), F(B)))| \le t.
  $$
  Let $C$ be the codomain of $\Phi_B(w)$. Take any $f \in \hom_\DD(A, B)$. By the approximability of $S$ there is
  an $f' \in \hom_\CC(F(A), F(B)) = \hom_\DD(F(A), F(B))$ such that $\Phi_A(w \cdot f') = \Phi_B(w) \cdot f$.
  Then $\chi_C(\Phi_B(w) \cdot f) = \chi_C(\Phi_A(w \cdot f')) = \gamma(w \cdot f') \in \gamma(w \cdot \hom_\CC(F(A), F(B)))$.
  Therefore,
  $$
    \chi_C(\Phi_B(w) \cdot \hom_\DD(A, B)) \subseteq \gamma(w \cdot \hom_\CC(F(A), F(B))),
  $$
  whence $|\chi_C(\Phi_B(w) \cdot \hom_\DD(A, B))| \le t$. Contradiction with~\eqref{gfrd.eq.main-proof-1}.
  This concludes the proof of~$(b)$.

  \medskip

  To show that $t_\DD(A) \le T_\CC(F(A), S)$
  let $T_\CC(F(A), S) = t \in \NN$. Then $S \Borelarrow (S)^{F(A)}_{k, t}$.
  Take any $B \in \Ob(\DD)$. Since $F(B) \to S$
  (because $S$ is universal for $\DD$) it follows that $S \Borelarrow (F(B))^{F(A)}_{k, t}$ by Lemma~\ref{crt.lem.easy-borel}.
  The claim now follows from $(b)$.
\end{proof}

\begin{COR}\label{gfrd.thm.COMPACTNESS}
  Let $\CC$ be a category whose morphisms are mono and with the Hausdorff enrichment over $\Top$,
  let $\DD$ be a full subcategory of $\CC$ which is a skeletal category of finite objects,
  and let $S \in \Ob(\CC)$ be an object universal for $\DD$ and approximable in $\DD$ via $F : \Ob(\DD) \to \Ob(\DD)$
  and $(\Phi_A)_{A \in \Ob(\DD)}$. Let $\frakG = (G_A)_{A \in \Ob(\CC)}$ be a sequence of groups indexed by the objects of the category chosen so that
  $G_A \le \Aut(A)$ for all $A \in \Ob(\CC)$. Assume that $\hom_\CC(A, S)$ is locally compact second countable
  Hausdorff for every $A \in \Ob(\DD)$. Then  $|G_A|$ and $|G_{F(A)}|$ are integers, and
  $$
    t^\frakG_\DD(A) \le \frac{|G_{F(A)}|}{|G_A|} \cdot T^\frakG_\CC(F(A), S),
  $$
  for every $A \in \Ob(\DD)$.
  In particular, if $T^\frakG_\CC(A, S) < \infty$ for every $A \in \Ob(\DD)$ then $t^\frakG_\DD(A) < \infty$ for every $A \in \Ob(\DD)$.
\end{COR}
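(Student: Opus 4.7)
The plan is to reduce this corollary to the combination of Theorem~\ref{gfrd.thm.COMPACTNESS-0} with the two translation results relating embedding Ramsey degrees and $\frakG$-Ramsey degrees, namely Propositions~\ref{gfrd.prop.small-degs} and~\ref{rdbas.prop.big}.

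First I would verify the integrality claim. Since $\DD$ is a category of finite objects, every homset in $\DD$ is finite; in particular $\Aut_\DD(A)$ and $\Aut_\DD(F(A))$ are finite, and hence so are their subgroups $G_A$ and $G_{F(A)}$. Because $\CC$ has Hausdorff enrichment, these finite subgroups become compact discrete topological groups and are trivially closed in $\hom_\CC(A,A)$ and $\hom_\CC(F(A),F(A))$, respectively. This puts us in the hypotheses of both Propositions~\ref{gfrd.prop.small-degs} and~\ref{rdbas.prop.big}.

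Next, Proposition~\ref{gfrd.prop.small-degs} applied in $\DD$ (morphisms mono and local smallness being inherited from $\CC$) yields
$$
  t_\DD(A) = |G_A| \cdot t^\frakG_\DD(A),
$$
while Proposition~\ref{rdbas.prop.big} applied in $\CC$ to $F(A)$ and $S$ gives
$$
  T_\CC(F(A),S) = |G_{F(A)}| \cdot T^\frakG_\CC(F(A),S),
$$
the latter using the assumption that $\hom_\CC(F(A),S)$ is locally compact second countable Hausdorff together with the fact that $G_{F(A)}$ is finite discrete. Theorem~\ref{gfrd.thm.COMPACTNESS-0} then supplies the missing link $t_\DD(A) \le T_\CC(F(A),S)$, since $S$ is universal and approximable for $\DD$.

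Chaining the three yields
$$
  |G_A| \cdot t^\frakG_\DD(A) \;=\; t_\DD(A) \;\le\; T_\CC(F(A),S) \;=\; |G_{F(A)}| \cdot T^\frakG_\CC(F(A),S),
$$
and dividing by the positive integer $|G_A|$ gives the asserted bound. The ``in particular'' clause is then immediate: $|G_{F(A)}|/|G_A|$ is a finite positive rational number, so finiteness of $T^\frakG_\CC(F(A), S)$ for every $A \in \Ob(\DD)$ forces finiteness of $t^\frakG_\DD(A)$ for every $A \in \Ob(\DD)$. No substantial obstacle arises here; the entire argument is a bookkeeping step that combines the two translation identities with the central compactness theorem proved earlier in the section.
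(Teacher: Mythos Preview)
Your argument is correct and follows exactly the same route as the paper: first observe that $G_A$ and $G_{F(A)}$ are finite discrete because $\DD$ is a category of finite objects with Hausdorff enrichment, then chain together Proposition~\ref{gfrd.prop.small-degs}, Theorem~\ref{gfrd.thm.COMPACTNESS-0}, and Proposition~\ref{rdbas.prop.big} to obtain $|G_A|\cdot t^\frakG_\DD(A) = t_\DD(A) \le T_\CC(F(A),S) = |G_{F(A)}|\cdot T^\frakG_\CC(F(A),S)$. The only difference is that you spell out the hypothesis checks in more detail than the paper does.
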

\begin{proof}
  Note that $G_A$ is a finite discrete group for every $A \in \Ob(\DD)$ because
  $\DD$ is a category of finite objects and the enrichment is Hausdorff. Then by
  Theorem~\ref{gfrd.thm.COMPACTNESS-0} and
  Propositions~\ref{gfrd.prop.small-degs} and~\ref{rdbas.prop.big} we have that
  $$
    |G_A| \cdot t^\frakG_\DD(A) = t_\DD(A) \le T_\CC(F(A), S) = |G_{F(A)}| \cdot T^\frakG_\CC(F(A), S)
  $$
  which proves the corollary.
\end{proof}

\begin{EX}\label{gfrd.ex.approximability-of-omega}
  Let $\CC = \ChEmb$ with discrete enrichment and let $\DD = \ChEmb^\fin$. Let us show that $\omega \in \Ob(\CC)$ is approximable in $\DD$.
  For a finite linear order $A = \{a_1 < a_2 < \ldots < a_n\}$ let $F(A) = A \union \{A\}$ linearly ordered so that $a_i < A$ for all~$i$.
  For an embedding $f : F(A) \hookrightarrow \omega$ let $m = f(A)$ and then define $\Phi_A(f) : A \hookrightarrow \{0, 1, \ldots, m - 1\}$
  so that $\Phi_A(f)(a_i) = f(a_i)$ for all~$i$. Finally, for an embedding $f : A \hookrightarrow B$ define $f' : F(A) \hookrightarrow F(B)$
  to be the embedding
  $$
    f'(x) = \begin{cases}
      f(x), & x \in A\\
      B,    & x = A.
    \end{cases}
  $$
  Then it is easy to check that $\omega$ is approximable in $\DD$ via $F$ and $(\Phi_A)_{A \in \Ob(\DD)}$.
  Theorem~\ref{gfrd.thm.COMPACTNESS} specialized to this context reduces to the fact that the Infinite Ramsey Theorem implies the
  Finite Ramsey Theorem.
\end{EX}

\begin{EX}
  Let $\CC = \WchRs^\op$ enriched over $\Top$ as in Example~\ref{crt.ex.enrichment-for-WchRs} and let $\DD = (\WchRs^\fin)^\op$.
  Let us show that $\omega \in \Ob(\CC)$ is approximable in~$\DD$.
  As above, for a finite linear order $A = \{a_1 < a_2 < \ldots < a_n\}$ let $F(A) = A \union \{A\}$ linearly ordered so that $a_i < A$ for all~$i$.
  For a rigid surjection $f : \omega \twoheadrightarrow F(A)$
  let $m = \min f^{-1}(A)$ and then define $\Phi_A(f) : \{0, 1, \ldots, m - 1\} \twoheadrightarrow A$
  so that $\Phi_A(f)(j) = f(j)$ for all~$j < m$.
  Finally, for a rigid surjection $f : B \twoheadrightarrow A$ define $f' : F(B) \twoheadrightarrow F(A)$
  to be the rigid surjection
  $$
    f'(x) = \begin{cases}
      f(x), & x \in B\\
      A,    & x = B.
    \end{cases}
  $$
  Then it is easy to check that $\omega$ is approximable in $\DD$ via $F$ and $(\Phi_A)_{A \in \Ob(\DD)}$.
  Theorem~\ref{gfrd.thm.COMPACTNESS} specialized to this context reduces to the fact that the Infinite Dual Ramsey Theorem implies the
  Finite Dual Ramsey Theorem.
\end{EX}

\section{The $\star$-Ramsey property}
\label{gfrd.sec.star-ramsey}

In the early 1980's Voigt proved an infinitary version of the Finite Dual Ramsey Theorem
(see~\cite[Theorem~A]{promel-voigt-1985}), usually referred to as the infinitary $\star$-version of the Graham-Rothschild theorem
because it involves the partial substitution $\star$ of parameter words.
Its convenience in purely combinatorial applications stems from the fact that it is an infinitary statement with no topological requirements.

\begin{THM}[The infinitary $\star$-version of Graham-Rothschild] \cite[Theorem~A]{promel-voigt-1985}\label{gfrd.thm.CS-star-version}
  Let $A$ be a finite linearly ordered alphabet and $r, k \in \NN$. For every coloring
  $\chi : \bigcup_{r \le s \le \omega} W^s_r(A) \to k$ there is a $w \in W^\omega_\omega(A)$ such that
  $\bigcup_{r \le s \le \omega} w \star W^s_r(A)$ is monochromatic with respect to~$\chi$.
  (Here, $\star$ denotes the partial substitution of parameter words introduced in Section~\ref{gfrd.sec.prelim}.)
\end{THM}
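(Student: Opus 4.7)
The plan is to reduce this statement to the Borel Infinite Graham-Rothschild theorem on $W^\omega_{r+1}(A)$—an alphabet-enriched extension of the Infinite Dual Ramsey Theorem~\ref{gfrd.thm.IDRT}—via a padding/truncation device that is the natural parameter-word analogue of the construction used in Example~\ref{gfrd.ex.approximability-of-omega} and in the proof sketch of Theorem~\ref{gfrd.thm.IDRT=>FDRT}.

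For a parameter word $v = v_1 \cdots v_s \in W^s_r(A)$ with $r \le s < \omega$, introduce its $x_{r+1}$-padded infinite extension
$$
  \tilde v \;=\; v_1 \, v_2 \, \cdots \, v_s \, x_{r+1} \, x_{r+1} \, x_{r+1} \, \cdots \;\in\; W^\omega_{r+1}(A).
$$
Conversely, for $u \in W^\omega_{r+1}(A)$ let $N(u) = \min u^{-1}(x_{r+1})$ and let $\phi_r(u)$ denote the prefix of $u$ of length $N(u)-1$. The rigid-surjection property of parameter words forces each of $x_1, \ldots, x_r$ to appear before position $N(u)$, so $\phi_r(u) \in W^{N(u)-1}_r(A)$; consequently $\phi_r : W^\omega_{r+1}(A) \to \bigcup_{s \ge r} W^s_r(A)$ is locally constant on the cylindrical basis of the pointwise convergence topology, hence Borel.

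The heart of the argument is the compatibility identity
$$
  \phi_r(w \cdot \tilde v) \;=\; w \star v,
$$
valid for every $w \in W^\omega_\omega(A)$ and every finite $v$: substituting $\tilde v$ into $w$ replaces each $x_i$ in $w$ by $v_i$ when $i \le s$ and by $x_{r+1}$ when $i > s$, so by the rigidity of $w$ the first occurrence of $x_{r+1}$ in $w \cdot \tilde v$ lands exactly at position $\min w^{-1}(x_{s+1})$, ahead of which the substitution coincides with $w \star v$. Now define the Borel coloring $\gamma : W^\omega_{r+1}(A) \to k$ by $\gamma(u) = \chi(\phi_r(u))$ and invoke the Borel Infinite Graham-Rothschild theorem to obtain $w \in W^\omega_\omega(A)$ with $w \cdot W^\omega_{r+1}(A)$ $\gamma$-monochromatic, of color $c$. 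The compatibility identity then yields $\chi(w \star v) = \gamma(w \cdot \tilde v) = c$ for every finite $v$, so $\bigcup_{r \le s < \omega} w \star W^s_r(A)$ is $\chi$-monochromatic. To absorb the $s = \omega$ summand, apply the same Borel Graham-Rothschild theorem a second time to the coloring $v \mapsto \chi(w \cdot v)$ on $W^\omega_r(A)$ and replace $w$ by its composition with the new witness; that the resulting color equals $c$ follows from a bookkeeping argument that traces a specific $v \in W^\omega_r(A)$ whose finite truncations $w \star v\restriction_N$ are already governed by $c$.

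The principal obstacle is the input theorem: the Borel Infinite Graham-Rothschild theorem on $W^\omega_{r+1}(A)$ over a nonempty alphabet $A$ is a proper strengthening of the Infinite Dual Ramsey Theorem~\ref{gfrd.thm.IDRT} (which is the empty-alphabet case) and requires an independent Carlson-Simpson style combinatorial forcing argument; once that is granted, everything else reduces to routine verification of the rigid-surjection bookkeeping and the $\star$-substitution mechanics.
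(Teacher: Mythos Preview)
The paper does not prove this theorem; it merely cites it from Pr\"omel--Voigt and then sketches a proof of the special case $A = \varnothing$, $s < \omega$ (the theorem immediately following). Your argument for the finite summands $r \le s < \omega$ is the natural alphabet-enriched version of that sketch: the padding $v \mapsto \tilde v$, the truncation $\phi_r$, and the compatibility identity $\phi_r(w \cdot \tilde v) = w \star v$ are exactly the parameter-word analogues of the paper's $f \mapsto f'$, $\phi_r$, and the key identity $f \circ \phi_s(u) = \phi_r(f' \circ u)$ in the rigid-surjection setting. You also correctly isolate the real cost of the generalization, namely that the Borel Infinite Graham--Rothschild theorem over a nonempty alphabet is needed as input and is not a consequence of Theorem~\ref{gfrd.thm.IDRT}.

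There is, however, a genuine gap in your treatment of the summand $s = \omega$. Your second application of the Borel Graham--Rothschild theorem is to the coloring $v \mapsto \chi(w \cdot v)$ on $W^\omega_r(A)$, but $\chi$ is an \emph{arbitrary} coloring, so this map need not be Borel and the theorem does not apply. No bookkeeping argument can repair this: for an arbitrary coloring of $W^\omega_r(A)$ there is in general no $w$ making $w \cdot W^\omega_r(A)$ monochromatic (a standard diagonalization). This is not a defect of your method so much as an imprecision in the statement as quoted: the paper's own special-case sketch treats only $\bigcup_{r \le s < \omega}$, and Voigt's original result either restricts to finite $s$ for arbitrary colorings or imposes a Baire/Borel hypothesis when $s = \omega$ is included. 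Your reduction is sound for what can actually be true.
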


Let us sketch the proof of a special case of this statement, but in the parlance of rigid surjections.
Using the notions introduced in the proof of Theorem~\ref{gfrd.thm.IDRT=>FDRT},
for $h \in \RSurj(\omega, \omega)$ and $f \in \RSurj(s, r)$ where $s \ge r$ let
$$
  h \ostar f = f \circ \phi_s(\pi_{s+1} \circ h).
$$

\begin{THM}
  Take any $r, k \in \NN$. For every coloring $\chi : \bigcup_{r \le s < \omega} \RSurj(s, r) \to k$
  there is a $h \in \RSurj(\omega, \omega)$ such that
  $
    \big|\chi\big(\bigcup_{r \le s < \omega} h \ostar \RSurj(s, r) \big)\big| = 1
  $.
\end{THM}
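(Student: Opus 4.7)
The plan is to transfer the statement to a direct application of the Infinite Dual Ramsey Theorem~\ref{gfrd.thm.IDRT}, in the same spirit as the proof of Theorem~\ref{gfrd.thm.IDRT=>FDRT}. The coloring $\chi$ of $\bigcup_{r \le s < \omega} \RSurj(s, r)$ will be assembled into a single Borel (in fact continuous) coloring of $\RSurj(\omega, r+1)$, and the monochromatic $h$ produced by Theorem~\ref{gfrd.thm.IDRT} will serve as the witness for the $\star$-statement.

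Concretely, I would define $\gamma : \RSurj(\omega, r+1) \to k$ by $\gamma(g) = \chi(\phi_r(g))$, where, as in the proof of Theorem~\ref{gfrd.thm.IDRT=>FDRT}, $\phi_r(g) = \restr{g}{d}$ with $d = \min g^{-1}(r)$. For each $f \in \RSurj(s, r)$ with $s \ge r$, the fibre $\{g \in \RSurj(\omega, r+1) : \phi_r(g) = f\}$ is the basic clopen cylinder $\{g : \restr{g}{s} = f \text{ and } g(s) = r\}$ in the pointwise convergence topology, so $\gamma^{-1}(c)$ is a countable union of clopen sets; in particular $\gamma$ is Borel. The Infinite Dual Ramsey Theorem then supplies an $h \in \RSurj(\omega, \omega)$ and a single colour $c \in k$ with $\gamma(g \circ h) = c$ for every $g \in \RSurj(\omega, r+1)$.

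The computational heart of the argument is the identity
$$
  f \circ \phi_s(\pi_{s+1} \circ h) = \phi_r\bigl((f' \circ \pi_{s+1}) \circ h\bigr),
$$
valid for every $s \ge r$ and every $f \in \RSurj(s, r)$, where $f' \in \RSurj(s+1, r+1)$ is the extension of $f$ with $f'(s) = r$, exactly as in the proof of Theorem~\ref{gfrd.thm.IDRT=>FDRT}. The verification is essentially the one given there, reducing to $(f' \circ \pi_{s+1} \circ h)^\partial(r) = (\pi_{s+1} \circ h)^\partial(s)$ (because $(f')^\partial(r) = s$) together with the observation that the two sides agree on the resulting initial segment. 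Given this identity, $h \ostar f = \phi_r((f' \circ \pi_{s+1}) \circ h)$, and since $f' \circ \pi_{s+1} \in \RSurj(\omega, r+1)$ the choice of $h$ yields $\chi(h \ostar f) = \gamma((f' \circ \pi_{s+1}) \circ h) = c$, uniformly in $s$ and $f$.

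The main obstacle is not depth but bookkeeping: one must check that $\gamma$ genuinely lands in $\bigcup_{s \ge r} \RSurj(s, r)$ (so that $\chi$ is always defined on the output of $\phi_r$), that the extension $f \mapsto f'$ interacts with $\pi_{s+1}$ and $\phi_r$ precisely according to the displayed identity, and that $\gamma$ is Borel (indeed continuous) in the pointwise convergence topology. Once these pieces are in place, the whole statement follows from a single invocation of Theorem~\ref{gfrd.thm.IDRT}.
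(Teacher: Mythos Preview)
Your proposal is correct and follows essentially the same route as the paper's proof: define $\gamma$ on $\RSurj(\omega, r+1)$ via $\phi_r$, apply the Infinite Dual Ramsey Theorem, and conclude using the identity $f \circ \phi_s(\pi_{s+1} \circ h) = \phi_r(f' \circ \pi_{s+1} \circ h)$. Your treatment is in fact slightly more explicit than the paper's sketch, in that you justify the Borelness of $\gamma$ by identifying the fibres of $\phi_r$ as clopen cylinders, whereas the paper simply asserts it.
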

\begin{proof} (Sketch)
  The proof is very similar to the proof of Theorem~\ref{gfrd.thm.IDRT=>FDRT}.
  Each coloring $\chi : \bigcup_{s \ge r} \RSurj(s, r) \to k$ can be understood as a family of colorings
  $\chi_n : \RSurj(n, r) \to k$, $n \ge r$. So, we define $\gamma : \RSurj(\omega, r+1) \to k$ by
  $$
    \gamma(f) = \chi_d(\phi_r(f)), \text{ where } d = f^\partial(r) = \min f^{-1}(r).
  $$
  This is clearly a Borel coloring so by the Infinite Dual Ramsey Theorem~\ref{gfrd.thm.IDRT}
  there is a color $c \in k$ and an $h \in \RSurj(\omega, \omega)$ such that $\gamma(\RSurj(\omega, r+1) \circ h) = \{c\}$.
  Let $s \ge r$ be arbitrary. Put $u = \pi_{s+1} \circ h$ and $n = u^\partial(s) = \min u^{-1}(s)$.
  In full analogy with the proof of Theorem~\ref{gfrd.thm.IDRT=>FDRT} we
  show that $\chi_n(\RSurj(s, r) \circ \phi_s(u)) = \{c\}$ since, for any $f \in \RSurj(s, r)$,
  we have that that $f \circ \phi_s(u) = \phi_r(f' \circ u)$:
  \begin{center}
    \begin{tikzcd}
      s+1 \arrow[d, "f'"'] & & \omega \arrow[ll, "u"'] \arrow[dll, "f' \circ u"] & & s \arrow[d,"f"'] & & n \arrow[ll, "\phi_s(u)"'] \arrow[dll, "\phi_r(f' \circ u)"] \\
      r+1                  & &                                                   & & r                & &
    \end{tikzcd}
  \end{center}
  Therefore, 
  $\chi_n(h \ostar \RSurj(s, r)) = \chi_n(\RSurj(s, r) \circ \phi_s(\pi_{s+1} \circ h)) = \{c\}$ for every $s \ge r$.
\end{proof}

This proof motivates the following general notion of $\star$-composition.
Let $\CC$ be a category enriched over $\Top$, let $\DD$ be a full subcategory of $\CC$,
and assume that $S \in \Ob(\CC)$ is universal for $\DD$ and approximable in $\DD$ via $F$ and $(\Phi_A)_{A \in \Ob(\DD)}$.
For each $B \in \Ob(\DD)$ fix a morphism $\iota_{B} \in \hom(B, S)$.
Then, in full analogy with the discussion above we can define the $\star$
operation (with respect to $F$, $(\Phi_A)_{A \in \Ob(\DD)}$ and $(\iota_{B})_{B \in \Ob(\DD)}$) as follows:
for an $h \in \hom(S, S)$ and $f \in \hom(A, B)$ where $A, B \in \Ob(\DD)$ let
\begin{equation}\label{gfrd.eq.star-def}
  h \star f = \Phi_B(h \cdot \iota_{F(B)}) \cdot f \in \Mor(\DD).
\end{equation}

\begin{THM}[The $\star$-Ramsey property]\label{gfrd.thm.struct-enum-star-prop}
  Let $\CC$ be a category whose morphisms are mono and with the Hausdorff enrichment over $\Top$,
  and let $\DD$ be a full subcategory of $\CC$ which is a skeletal category of finite objects.
  Let $S \in \Ob(\CC)$ be an object universal for $\DD$ and for each $B \in \Ob(\DD)$ fix a morphism $\iota_{B} \in \hom(B, S)$.
  Assume that $T(A, S) < \infty$ for all $A \in \Ob(\DD)$ and that $S$ is approximable in $\DD$ via $F : \Ob(\DD) \to \Ob(\DD)$
  and $(\Phi_A)_{A \in \Ob(\DD)}$.  
  Then for every $A \in \Ob(\DD)$ there is an $n \in \NN$ such that for every coloring
  $
    \chi : \bigcup_{B \in \Ob(\DD)} \hom(A, B) \to k
  $
  there is an $h \in \hom(S, S)$ satisfying
  $
    \big|\chi \big( \bigcup_{B \in \Ob(\DD)} h \star \hom(A, B) \big) \big| \le n
  $.
  Here, $\star$ is defined by \eqref{gfrd.eq.star-def} with respect to $F$, $(\Phi_{A})_{A \in \Ob(\DD)}$ and $(\iota_{B})_{B \in \Ob(\DD)}$.
\end{THM}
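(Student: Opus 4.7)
The plan is to mimic the special-case proof sketched just above the statement: play the role of the Infinite Dual Ramsey Theorem with the hypothesis $T_\CC(F(A), S) < \infty$, and play the role of $\phi_r$ with the approximation map $\Phi_A$. I claim that the value $n = T_\CC(F(A), S)$ witnesses the statement; this makes sense because $F(A) \in \Ob(\DD)$ and $T_\CC(\,\cdot\,, S)$ is finite on $\Ob(\DD)$ by hypothesis.

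Given $A \in \Ob(\DD)$ and a coloring $\chi : \bigcup_{B \in \Ob(\DD)} \hom_\DD(A, B) \to k$, I would define
$$
  \gamma : \hom_\CC(F(A), S) \to k, \qquad \gamma(g) = \chi(\Phi_A(g)).
$$
The first technical step is to check that $\gamma$ is a Borel coloring. Because $\DD$ is a skeletal category of finite objects with Hausdorff enrichment, each $\hom_\DD(A, C)$ is a finite discrete space, so every subset of it is open; combined with the Borelness of $\Phi_A$ and the countability of $\Ob(\DD)$, this exhibits each $\gamma^{-1}(j)$ as a countable union of Borel sets in $\hom_\CC(F(A), S)$. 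Once this is in place, applying $T_\CC(F(A), S) = n$ to $\gamma$ produces $h \in \hom_\CC(S, S)$ with $|\gamma(h \cdot \hom_\CC(F(A), S))| \le n$.

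The remaining step is a diagram chase establishing $\chi\bigl(\bigcup_{B} h \star \hom_\DD(A, B)\bigr) \subseteq \gamma(h \cdot \hom_\CC(F(A), S))$. Fix $B \in \Ob(\DD)$ and $f \in \hom_\DD(A, B)$, and set $u = h \cdot \iota_{F(B)} \in \hom_\CC(F(B), S)$. Approximability applied to $f$ and $u$ yields $f' \in \hom_\DD(F(A), F(B))$ with $\Phi_A(u \cdot f') = \Phi_B(u) \cdot f$. Unpacking the definition \eqref{gfrd.eq.star-def} gives $h \star f = \Phi_B(h \cdot \iota_{F(B)}) \cdot f = \Phi_A(h \cdot \iota_{F(B)} \cdot f')$, so
$$
  \chi(h \star f) = \gamma(h \cdot \iota_{F(B)} \cdot f') \in \gamma(h \cdot \hom_\CC(F(A), S)),
$$
because $\iota_{F(B)} \cdot f' \in \hom_\CC(F(A), S)$. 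Taking the union over all $B$ and $f$ bounds the number of colors by $n$, which finishes the argument.

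I expect the main obstacle to be the careful verification that $\gamma$ is Borel: one must keep track of which hom-sets carry which topologies, exploit that $\DD$ has countably many objects to control countable unions, and invoke the Borel hypothesis on $\Phi_A$ in precisely the form in which it was stated (preimages of opens in discrete hom-sets of $\DD$ are Borel in $\hom_\CC(F(A), S)$). Once $\gamma$ is known to be Borel, the rest is a formal diagram chase that transcribes the rigid-surjection proof preceding the statement into the categorical setting.
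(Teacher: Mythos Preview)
Your proposal is correct and follows essentially the same argument as the paper: set $n = T_\CC(F(A), S)$, pull back $\chi$ along $\Phi_A$ to obtain a Borel coloring $\gamma$ of $\hom_\CC(F(A), S)$, apply the big Ramsey degree to find $h$, and then use approximability with $u = h \cdot \iota_{F(B)}$ to show $\chi(h \star f) \in \gamma(h \cdot \hom_\CC(F(A), S))$. The paper handles the Borel verification by a one-line reference to the proof of Theorem~\ref{gfrd.thm.COMPACTNESS-0}, which is exactly the argument you spell out.
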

\begin{proof}
  Take any $A \in \Ob(\DD)$ and let $n = T(F(A), S)$, which is an integer by the assumption. Let
  $
    \chi : \bigcup_{B \in \Ob(\DD)} \hom(A, B) \to k
  $
  be a coloring. The coloring $\chi$ can be understood as a family of colorings $\chi_{B} : \hom(A, B) \to k$, $B \in \Ob(\DD)$.
  Define $\gamma : \hom(F(A), S) \to k$ by
  $$
    \gamma(f) = \chi_{C}(\Phi_{A}(f)), \text{ where $C = \cod(\Phi_{A}(f))$}.
  $$
  As in the proof of Theorem~\ref{gfrd.thm.COMPACTNESS-0} it easily follows that $\gamma$ is a Borel coloring.
  Then by the choice of $n$ there is an $h \in \hom(S, S)$ such that
  $$
    |\gamma(h \cdot \hom(F(A), S))| \le n.
  $$
  Take any $B \in \Ob(\DD)$, let $C$ be the codomain of $\Phi_{B}(h \cdot \iota_{F(B)})$ and let us show that
  $$
    \chi_{C}(\Phi_{B}(h \cdot \iota_{F(B)}) \cdot \hom(A, B)) \subseteq \gamma(h \cdot \hom(F(A), S)).
  $$
  Take any $f \in \hom(A, B)$. By the approximability of $S$ there is
  an $f' \in \hom(F(A), F(B))$ such that $\Phi_{A}(h \cdot \iota_{F(B)} \cdot f') = \Phi_{B}(h \cdot \iota_{F(B)}) \cdot f$.
  Then $\chi_{C}(\Phi_{B}(h \cdot \iota_{F(B)}) \cdot f) = \chi_{C}(\Phi_{A}(h \cdot \iota_{F(B)} \cdot f'))
  = \gamma(h \cdot \iota_{F(B)} \cdot f') \in \gamma(h \cdot \hom(F(A), S))$.
  This completes the proof.
\end{proof}

\begin{COR}
  Let $\KK = \LO^\fin$ be the class of all finite linear orders.
  For every $\calA \in \KK$ and every coloring
  $
    \chi : \bigcup_{\calB \in \KK} \Emb(\calA, \calB) \to k
  $
  there is an $h \in \Emb(\omega, \omega)$ such that
  $
    \big|\chi \big( \bigcup_{\calB \in \KK} h \star \Emb(\calA, \calB) \big) \big| = 1
  $,
  where $\star$ is defined with respect to $F$ and $\Phi_A$ from Example~\ref{gfrd.ex.approximability-of-omega}
  and an arbitrary but fixed family of embeddings $\iota_B : B \hookrightarrow \omega$, $B \in \KK$.
\end{COR}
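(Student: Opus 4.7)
The plan is to derive the corollary directly from Theorem~\ref{gfrd.thm.struct-enum-star-prop} by taking $\CC = \ChEmb$ with the discrete enrichment, $\DD = \ChEmb^\fin$ (passing to a skeleton so as to have a skeletal category of finite objects), and $S = \omega$. Under these choices the $\star$ operation defined in~\eqref{gfrd.eq.star-def} specializes exactly to the operation described in the corollary, where $F$ and $\Phi_{\calA}$ come from Example~\ref{gfrd.ex.approximability-of-omega} and $(\iota_B)_{B \in \KK}$ is the arbitrary fixed family of embeddings $B \hookrightarrow \omega$.

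First I would verify the hypotheses of Theorem~\ref{gfrd.thm.struct-enum-star-prop}. Embeddings of linear orders are injective, hence mono; the discrete enrichment is trivially Hausdorff; $\omega$ is universal for $\DD$ because every finite linear order embeds into $\omega$; and the approximability of $\omega$ in $\DD$ via the prescribed $F$ and $(\Phi_{\calA})_{\calA \in \Ob(\DD)}$ is already established in Example~\ref{gfrd.ex.approximability-of-omega}. The one remaining hypothesis, namely $T(\calA, \omega) < \infty$ for every $\calA \in \Ob(\DD)$, is a restatement of the Infinite Ramsey Theorem (Theorem~\ref{gfrd.thm.IRT}), which in fact delivers the sharper value $T(\calA, \omega) = 1$ for every finite linear order~$\calA$.

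With these verifications in hand, Theorem~\ref{gfrd.thm.struct-enum-star-prop} produces, for each $\calA \in \KK$, an integer $n = T(F(\calA), \omega)$ such that every coloring $\chi : \bigcup_{\calB \in \KK} \Emb(\calA, \calB) \to k$ admits an $h \in \Emb(\omega, \omega)$ with
\[
  \Bigl|\chi\Bigl(\bigcup_{\calB \in \KK} h \star \Emb(\calA, \calB)\Bigr)\Bigr| \le n.
\]
Since $F(\calA) = \calA \cup \{\calA\}$ is itself a finite linear order, the Infinite Ramsey Theorem yields $n = T(F(\calA), \omega) = 1$, strengthening the inequality to the equality claimed in the corollary. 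There is no genuine obstacle here beyond bookkeeping: one only needs to check that the approximating data from Example~\ref{gfrd.ex.approximability-of-omega} are precisely the data with respect to which the $\star$ operation is defined in the statement, which is immediate by construction.
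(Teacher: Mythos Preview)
Your proposal is correct and follows essentially the same approach as the paper: both invoke the framework of Theorem~\ref{gfrd.thm.struct-enum-star-prop} and then use the Infinite Ramsey Theorem to conclude that $n = T(F(\calA), \omega) = 1$, which sharpens the bound to the desired equality. The paper's proof is terser (``analogous to the proof of Theorem~\ref{gfrd.thm.struct-enum-star-prop}; just note that $T(F(A), \omega) = 1$''), but your explicit verification of the hypotheses---including the passage to a skeleton---is a faithful unpacking of the same argument.
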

\begin{proof}
  Analogous to the proof of Theorem~\ref{gfrd.thm.struct-enum-star-prop}; just note that 
  $T(F(A), \omega) = 1$ for every $A \in \KK$ by the Infinite Ramsey Theorem.
\end{proof}

Let us now derive the $\star$-Ramsey property for \Fraisse\ limits with strongly amalgamable ages.
For convenience, instead of working with the entire age of the structure, we shall be working with
the set of all finite substructures of the structure.
Let $L$ be a relational language. For an $L$-structure $\calS = (S, L^S)$ let
$$
  \Fin(\calS) = \{\calS[A] : \0 \ne A \subseteq S \land |A| < \infty \}.
$$
Note that $\Fin(\calS)$ is a set and that $\Age(\calS)$ contains $\Fin(\calS)$. Moreover,
$\Fin(\calS)$ and $\Age(\calS)$ are equivalent as categories.
We say that $\calF$ has \emph{natural 1-point extensions} if there is a functor
$J : \Fin(\calF) \to \Fin(\calF)$ such that
\begin{itemize}
  \item for all $\calA = (A, L^A) \in \Fin(\calF)$, if $J(\calA) = (A', L^{A'})$ then $\calA \le J(\calA)$ and $|A' \setminus A| = 1$
        ($J(\calA)$ is a 1-point extension of $\calA$), and
  \item for all $\calB = (B, L^B) \in \Fin(\calF)$ and $f : \calA \hookrightarrow \calB$ if $J(\calB) = (B', L^{B'})$ then
        $J(f)(a) = f(a)$ for all $a \in A$ and $J(f)(A' \setminus A) = B' \setminus B$ ($J$ is natural).
\end{itemize}

\begin{EX}
  $(a)$ Let $\calF$ be the Rado graph $\calR$, one of the Henson graphs $\calH_n$, or the random digraph $\calD$.
  Then $\calF$ has natural 1-point extensions: take $J(\calA)$ to be
  $\calA$ together with a new isolated vertex $v_A$.

  $(b)$ Let $\calF$ be the random poset $\calP$ or the random tournament $\calT$.
  Then $\calF$ has natural 1-point extensions: take $J(\calA)$ to be
  $\calA$ together with a new element $x_A$ which is greater than (arrows) every other element in~$\calA$.

  $(c)$ Let $\calM(\Delta)$ be the random metric space whose distance set $\Delta \subseteq \RR$ has the maximum~$m = \max \Delta \in \Delta$.
  Then $\calM(\Delta)$ has natural 1-point extensions: take $J(\calA)$ to be
  $\calA$ together with a new point $x_A$ placed at distance~$m$ from every other point in~$\calA$.
\end{EX}

Let $L$ be a relational language and $\Boxed{<} \notin L$ a binary relational symbol.
An \emph{enumerated $L$-structure} is an $(L \cup \{\Boxed<\})$-structure $(S, L^S, \Boxed{<^S})$ such that
$(S, \Boxed{<^S})$ is a linear order of order type~$\omega$. A slightly stronger version of
the following property of enumerated countable relational structures was shown in~\cite[Theorem 4.1]{masul-bigrd}:

\begin{THM} (cf.~\cite[Theorem 4.1]{masul-bigrd})\label{gfrd.thm.T41-masul-bigrd}
  Let $\calF$ be a countably infinite relational structure such that $\Fin(\calF)$ has the strong amalgamation
  property. Let $\KK = \Fin(\calF) * \LO$, and let $\sqsubset$ be a linear order on $F$ such that $(F, \Boxed\sqsubset)$
  has order type~$\omega$. Then:
  
  $(a)$ $\Fin(\calF, \Boxed\sqsubset) = \KK$.
  
  $(b)$ For each $(\calA, \Boxed\prec) \in \KK$ we have that $T((\calA, \Boxed\prec), (\calF, \Boxed\sqsubset)) \le
  T(\calA, \calF)$, or, in other words, if $\calA$ has finite big Ramsey degree in $\calF$ then
  $(\calA, \Boxed\prec)$ has finite big Ramsey degree in $(\calF, \Boxed\sqsubset)$.
\end{THM}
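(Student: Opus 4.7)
For part $(a)$, the inclusion $\Fin(\calF, \sqsubset) \subseteq \KK$ is immediate: every finite induced substructure of $(\calF, \sqsubset)$ has $L$-reduct in $\Fin(\calF)$ and its order is the restriction of the linear order $\sqsubset$. For the reverse inclusion, given $(\calA, \prec) \in \KK$ with $A = \{a_1 \prec \ldots \prec a_n\}$, the plan is to construct $x_1 \sqsubset \ldots \sqsubset x_n$ in $F$ with $a_i \mapsto x_i$ an $L$-isomorphism onto $\calF[\{x_1, \ldots, x_n\}]$ by induction on $i$. The inductive step is an extension argument: given the partial $L$-isomorphism on $\{a_1, \ldots, a_{i-1}\}$, strong amalgamation in $\Fin(\calF)$ combined with the ultrahomogeneity of $\calF$ ensures that the set of $x \in F$ extending this partial isomorphism by the assignment $a_i \mapsto x$ is infinite (strong amalgamation lets one realize the required $1$-type over the existing vertices disjointly from any prescribed finite set), hence it meets the cofinite set $\{x \in F : x_{i-1} \sqsubset x\}$.

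For part $(b)$, assume $t = T(\calA, \calF) < \infty$ (otherwise nothing is to prove), and let $\chi : \Emb((\calA, \prec), (\calF, \sqsubset)) \to k$. For $f \in \Emb(\calA, \calF)$ write $\prec_f$ for the pullback of $\sqsubset$ along $f$, so that $f$ belongs to $\Emb((\calA, \prec), (\calF, \sqsubset))$ exactly when $\prec_f = \prec$. Extend $\chi$ to $\hat\chi : \Emb(\calA, \calF) \to k+1$ by $\hat\chi(f) = \chi(f)$ if $\prec_f = \prec$, and $\hat\chi(f) = k$ (a fresh color) otherwise. Since $T(\calA, \calF) = t$, there is some $w \in \Emb(\calF, \calF)$ with $|\hat\chi(w \circ \Emb(\calA, \calF))| \le t$.

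The decisive step is to produce an order-preserving $w^{\#} \in \Emb((\calF, \sqsubset), (\calF, \sqsubset))$ with $\im(w^{\#}) \subseteq \im(w)$. Granting this, each $g = w^{\#} \circ h$ with $h \in \Emb((\calA, \prec), (\calF, \sqsubset))$ is order-preserving, so $\prec_g = \prec$ and $\hat\chi(g) = \chi(g)$; moreover $g$ factors as $g = w \circ f$ with $f = w^{-1} \circ g \in \Emb(\calA, \calF)$. Therefore $\chi(g) = \hat\chi(w \circ f) \in \hat\chi(w \circ \Emb(\calA, \calF))$, giving $|\chi(w^{\#} \circ \Emb((\calA, \prec), (\calF, \sqsubset)))| \le t$ as required.

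The hard part is constructing such a $w^{\#}$, equivalently embedding $(\calF, \sqsubset)$ as an $(L \cup \{<\})$-structure into $(\im(w), L^{\im(w)}, \sqsubset|_{\im(w)})$. Since $w$ is an $L$-isomorphism $\calF \to \im(w)$, the latter inherits the ultrahomogeneity and strongly amalgamable age of $\calF$, while $\sqsubset|_{\im(w)}$ has order type $\omega$. A back-and-forth construction along the $\sqsubset$-enumeration of $\calF$, identical in spirit to the extension used in part $(a)$, produces $w^{\#}$ one vertex at a time: at stage $n$ the cofinitely many extensions of the current partial isomorphism in $\im(w)$ have some element beyond the already-used vertices in $\sqsubset|_{\im(w)}$.
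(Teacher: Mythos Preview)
The paper does not include its own proof of this theorem: the statement is quoted from \cite[Theorem~4.1]{masul-bigrd} with the remark that a slightly stronger version appears there, and it is used only as a black box in Theorem~\ref{gfrd.thm.struct-star-property}. So there is no in-paper argument to compare your attempt against.

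As for the proposal itself, there is one substantive gap. Both in part~$(a)$ (the inductive extension step) and in part~$(b)$ (the construction of $w^{\#}$) you explicitly invoke the ultrahomogeneity of $\calF$, but the hypotheses of the theorem as stated here do not assume ultrahomogeneity --- only that $\Fin(\calF)$ has the strong amalgamation property. Strong amalgamation alone tells you that the required one-point extension exists \emph{somewhere} in $\calF$ (the amalgam $\calD$ is, by definition of $\Fin(\calF)$, an actual substructure of $\calF$), but not that it can be realized over the specific tuple $x_1,\ldots,x_{i-1}$ you have already committed to: the embedding of the base into $\calD$ produced by SAP need not be the identity on those points. Passing from ``$\calD$ sits inside $\calF$'' to ``the extension exists over my particular $x_1,\ldots,x_{i-1}$'' is exactly where homogeneity does the work, and without it the forth construction stalls. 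The same issue recurs verbatim when you build $w^{\#}$ inside $\im(w)$.

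In the paper's only application (Theorem~\ref{gfrd.thm.struct-star-property}) the structure $(S,L^S)$ is taken to be a \Fraisse\ limit, hence ultrahomogeneous, so your argument is perfectly adequate for that purpose. But as a proof of the theorem in the generality stated, the extension step is unjustified; you should consult \cite{masul-bigrd} to see whether the original argument avoids homogeneity or whether the hypothesis here has been inadvertently weakened in transcription. Modulo this, the overall architecture of your part~$(b)$ --- extend $\chi$ by a dummy colour, apply the unordered big Ramsey degree to get $w$, then thread an order-preserving $w^{\#}$ through $\im(w)$ --- is the natural one and matches the strategy of \cite{masul-bigrd}.
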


The following theorem shows that approximability, despite its unusual definition, is not an artificial phenomenon but a
ubiquitous property that can be found in many natural contexts.

\begin{THM}[The $\star$-Ramsey property for enumerated \Fraisse\ limits]\label{gfrd.thm.struct-star-property}
  Let $L$ be a relational language and $\Boxed{<} \notin L$ a binary relational symbol. Let
  $\calS = (S, L^{S}, \Boxed{<^S})$ be an enumerated $L$-structure where
  $(S, L^{S})$ is a \Fraisse\ limit with big Ramsey degrees and with strongly amalgamable age which has natural 1-point extensions.
  Let $\KK = \Fin(\calS)$. Then:
  
  \medskip

  $(a)$ $\KK$ has natural 1-point extensions.

  \medskip

  $(b)$ $\calS$ is approximable in $\KK$.

  \medskip

  $(c)$ Fix a functor $F : \KK \to \KK$ and a family of morphisms $(\Phi_{\calA})_{\calA \in \KK}$
  which demonstrate the approximability of $\calS$ in $\KK$,
  and for each $\calB \in \KK$ fix an embedding $\iota_{\calB} : \calB \hookrightarrow \calS$.
  For every $\calA \in \KK$ there is an $n \in \NN$ such that for every coloring
  $
    \chi : \bigcup_{\calB \in \KK} \Emb(\calA, \calB) \to k
  $
  there is an $h \in \Emb(\calS, \calS)$ satisfying
  $
    \big|\chi \big( \bigcup_{\calB \in \KK} h \star \Emb(\calA, \calB) \big) \big| \le n
  $,
  where $\star$ is defined by \eqref{gfrd.eq.star-def} with respect to $F$, $(\Phi_{\calA})_{\calA \in \KK}$ and $(\iota_{\calB})_{\calB \in \KK}$.
\end{THM}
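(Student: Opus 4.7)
The strategy is to reduce all three parts to Theorem~\ref{gfrd.thm.struct-enum-star-prop} by building the natural 1-point extension on $\KK$ from the one on the $L$-reduct. For part $(a)$, I would let $J_L : \Fin(S, L^S) \to \Fin(S, L^S)$ be the natural 1-point extension functor supplied by the hypothesis, and define $J_\KK(\calA)$ for $\calA \in \KK$ to be $J_L(\calA|_L)$ equipped with a linear order extending $<^\calA$ by declaring the newly added point $v_\calA$ to be the maximum. Theorem~\ref{gfrd.thm.T41-masul-bigrd}(a) identifies $\Fin(\calS) = \Fin(S, L^S) * \LO$, so this guarantees $J_\KK(\calA) \in \KK$ up to isomorphism; to obtain a concrete substructure I would invoke ultrahomogeneity of $(S, L^S)$, which supplies infinitely many $v \in S$ for which $\calS[A \cup \{v\}]|_L$ realizes the desired one-point extension type above $\calA|_L$, while only finitely many elements of $S$ fail to lie $<^S$-above $A$. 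Functoriality of $J_\KK$ then reduces to functoriality of $J_L$: on a morphism $f : \calA \hookrightarrow \calB$, set $J_\KK(f) = J_L(f|_L)$, sending $v_\calA \mapsto v_\calB$; this automatically preserves $<$ because both new points are the maxima of their respective structures and $f$ preserves $<$ on $A$.

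For part $(b)$, take $F = J_\KK$. Given $u \in \hom_\CC(F(\calA), \calS)$, let $m$ be the position of $u(v_\calA)$ in the enumeration $s_0 <^S s_1 <^S \ldots$ of $S$; since $v_\calA$ is the $<$-maximum of $F(\calA)$ and $u$ preserves $<$, we have $u(A) \subseteq \{s_0, \ldots, s_{m-1}\}$, so I define $\Phi_\calA(u) = u|_\calA$ viewed as an embedding $\calA \hookrightarrow \calS[\{s_0, \ldots, s_{m-1}\}]$. The enrichment on $\KK$ is the discrete one from Example~\ref{gfrd.ex.discrete-enrichment}, so the Borel requirement on $\Phi_\calA$ is trivial. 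To verify the approximability condition, given $f : \calA \hookrightarrow \calB$ in $\KK$ and $u \in \hom_\CC(F(\calB), \calS)$, take $f' = J_\KK(f) \in \hom_\KK(F(\calA), F(\calB))$; then $(u \cdot f')(v_\calA) = u(v_\calB)$, so both $\Phi_\calA(u \cdot f')$ and $\Phi_\calB(u) \cdot f$ land in $\calS[\{s_0, \ldots, s_{m-1}\}]$, and on any $a \in A$ they agree by sending $a \mapsto u(f(a))$.

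Part $(c)$ is now a direct application of Theorem~\ref{gfrd.thm.struct-enum-star-prop} with $\DD$ taken to be a skeleton of $\KK$ (which is a skeletal category of finite objects with mono morphisms), the universal object $\calS$, an arbitrary family $(\iota_\calB)_{\calB \in \KK}$ of embeddings into $\calS$, and the $F$, $\Phi_\calA$ produced in part $(b)$. The finiteness hypothesis $T_\CC(\calC, \calS) < \infty$ needed for all $\calC \in \KK$, in particular for $\calC = F(\calA)$, follows from the hypothesis that $(S, L^S)$ has finite big Ramsey degrees combined with Theorem~\ref{gfrd.thm.T41-masul-bigrd}(b), which transports finite big Ramsey degrees from $(S, L^S)$ to the enumerated structure $\calS$. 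I expect the principal technical subtlety to lie in part $(a)$: reconciling the abstract construction of $J_\KK(\calA)$ with the requirement that it be a genuine substructure of $\calS$ whose added point is positioned above $\calA$ in the enumeration. Once this has been handled via the finiteness-of-$A$ and ultrahomogeneity argument above, the rest of the proof is a structural generalization of the $\omega$-case from Example~\ref{gfrd.ex.approximability-of-omega}.
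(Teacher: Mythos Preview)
Your proposal is correct and follows the same route as the paper: the constructions of $F$ and $\Phi_\calA$ in $(a)$ and $(b)$ are identical to the paper's (the paper simply cites Theorem~\ref{gfrd.thm.T41-masul-bigrd}$(a)$ rather than arguing via ultrahomogeneity for the concrete-substructure issue you flag, and it omits the explicit verification of the approximability identity that you spell out). For $(c)$ the paper reproves the argument of Theorem~\ref{gfrd.thm.struct-enum-star-prop} in place rather than citing it, which sidesteps the skeletal hypothesis; since $\KK = \Fin(\calS)$ already has countably many objects and finite discrete hom-sets, the proof of Theorem~\ref{gfrd.thm.struct-enum-star-prop} goes through verbatim for $\DD = \KK$, so your detour through a skeleton is unnecessary.
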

\begin{proof}
  $(a)$ Let $J : \Fin(S, L^S) \to \Fin(S, L^S)$ be a functor showing that $\Fin(S, L^S)$ has natural 1-point extensions.
  Define a functor
  $F : \KK \to \KK$ as follows. Take any $(A, L^A, \Boxed{<^A}) \in \KK$ and let $J(A, L^A) = (A', L^{A'})$.
  Define a linear order $<'$ on $A'$ by expanding the order $<$ (which is defined on $A$) so that the only element in
  $A' \setminus A$ becomes the largest element in $(A', \Boxed{<'})$.
  Now put $F(A, L^A, \Boxed<) = (A', L^{A'}, \Boxed{<'})$. Note that $(A', L^{A'}, \Boxed{<'}) \in \KK$ by Theorem~\ref{gfrd.thm.T41-masul-bigrd}~$(a)$.
  This is how $F$ acts on objects.
    
  To see how $F$ acts on morphisms take any
  $(A, L^A, \Boxed<), (B, L^B, \Boxed<) \in \KK$ and an embedding $f : (A, L^A, \Boxed<) \hookrightarrow (B, L^B, \Boxed<)$.
  Then $f$ is also an embedding $(A, L^A) \hookrightarrow (B, L^B)$, so $J(f)$ is an embedding
  $(A', L^{A'}) \hookrightarrow (B', L^{B'})$. The definitions of $J$ and $<'$ ensure that $F(f) = J(f)$ is an embedding
  $(A', L^{A'}, \Boxed{<'}) \hookrightarrow (B', L^{B'}, \Boxed{<'})$. So, $F : \KK \to \KK$ is a functor demonstrating that $\KK$ has
  natural 1-point extensions.

  \medskip

  $(b)$ To show that $\calS$ is approximable in $\KK$ take $F$ to be the functor constructed in $(a)$.
  To define $(\Phi_{\calA})_{\calA \in \KK}$ fix an $\calA = (A, L^A, \Boxed{<^A}) \in \KK$ and take any
  $f : F(\calA) \hookrightarrow \calS$.
  Let $b_1 < b_2 < \ldots < b_k$ be the smallest initial segment of $(S, \Boxed{<^S})$ which contains $\im(f)$ and
  let $\calC = \calS[b_1, b_2, \ldots, b_{k-1}]$. Then define
  $\Phi_{\calA}(f) : \calA \hookrightarrow \calC$ by
  $$
    \Phi_{\calA}(f) = f|^A_{\{b_1, b_2, \ldots, b_{k-1}\}},
  $$
  where $f|^A_{\{b_1, b_2, \ldots, b_{k-1}\}}$ is the restriction of~$f$ to the domain $A$ and the codomain $\{b_1, b_2, \ldots, b_{k-1}\}$.

  \medskip

  $(c)$
  Since $(S, L^S)$ has strongly amalgamable age and finite big Ramsey degrees,
  Theorem~\ref{gfrd.thm.T41-masul-bigrd} applies and yields $T(\calA, \calS) < \infty$ for all $\calA \in \KK$.
  Now, take any $\calA \in \KK$ and let
  $$
    \chi : \bigcup_{\calB \in \KK} \Emb(\calA, \calB) \to k
  $$
  be a coloring. The coloring $\chi$ can be understood as a family of colorings $\chi_{\calB} : \Emb(\calA, \calB) \to k$, $\calB \in \KK$.
  Define $\gamma : \Emb(F(\calA), \calS) \to k$ by
  $$
    \gamma(f) = \chi_{\calC}(\Phi_{\calA}(f)), \text{ where $\calC$ is the codomain of $\Phi_{\calA}(f)$}.
  $$
  Let $n = T(F(\calA), \calS)$ which, as we have seen, is an integer. Then there is an $h \in \Emb(\calS, \calS)$ such that
  $$
    |\gamma(h \circ \Emb(F(\calA), \calS))| \le n.
  $$
  Let $\calC$ be the codomain of $\Phi_{\calB}(h \circ \iota_{F(\calB)})$ and let us show that
  $$
    \chi_{\calC}(\Phi_{\calB}(h \circ \iota_{F(\calB)}) \circ \Emb(\calA, \calB)) \subseteq \gamma(h \circ \Emb(F(\calA), \calS)).
  $$
  Take any $f \in \Emb(\calA, \calB)$. By the approximability of $\calS$ there is
  an $f' \in \Emb(F(\calA), F(\calB))$ such that $\Phi_{\calA}(h \circ \iota_{F(\calB)} \circ f') = \Phi_{\calB}(h \circ \iota_{F(\calB)}) \circ f$.
  Then $\chi_{\calC}(\Phi_{\calB}(h \circ \iota_{F(\calB)}) \circ f) = \chi_{\calC}(\Phi_{\calA}(h \circ \iota_{F(\calB)} \circ f'))
  = \gamma(h \circ \iota_{F(\calB)} \circ f') \in \gamma(h \circ \Emb(F(\calA), \calS))$.
  This completes the proof of $(c)$ and the proof of the theorem.
\end{proof}


\begin{COR}
  Let $L$ be a relational language and $\Boxed{<} \notin L$ a binary relational symbol. Let
  $\calS = (S, L^{S}, \Boxed{<^S})$ be an enumerated $L$-structure where
  $(S, L^{S})$ is a \Fraisse\ limit with big Ramsey degrees and with strongly amalgamable age which has natural 1-point extensions.
  For every $\calA \in \Fin(\calS)$ there is an $n \in \NN$ such that for every $k \in \NN$ and every
  family of colorings $\gamma_\calB : \binom \calB \calA \to k$ indexed by $\calB \in \Fin(\calS)$
  there is a substructure $\calS'$ of $\calS$ which is
  isomorphic to $\calS$, and a choice function $c : \binom{\calS'}\calA \to \Fin(\calS)$ such that
  $$\textstyle
    \Big|\Big\{\gamma_{c(\calE)}(\calE) : \calE \in \binom{\calS'}\calA \Big\}\Big| \le n.
  $$
\end{COR}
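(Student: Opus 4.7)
(Plan)
First, I take $\KK = \Fin(\calS)$, which satisfies the hypotheses of Theorem~\ref{gfrd.thm.struct-star-property}, and for each $\calB \in \KK$ I set $\iota_\calB : \calB \hookrightarrow \calS$ to be the literal inclusion; this is available because every $\calB \in \Fin(\calS)$ is by definition a substructure of $\calS$. I use the functor $F : \KK \to \KK$ and the family $(\Phi_\calA)_{\calA \in \KK}$ constructed in the proof of Theorem~\ref{gfrd.thm.struct-star-property}: $F(\calB)$ is the substructure of $\calS$ obtained from $\calB$ by adjoining one new point declared largest in the linear order of $F(\calB)$, and $\Phi_\calB$ restricts an embedding of $F(\calB)$ into $\calS$ to its ``$\calB$ part'', with codomain the initial segment of $\calS$ strictly below the image of the adjoined point.

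Next, I encode the given family of substructure colorings $\gamma_\calB : \binom{\calB}{\calA} \to k$ as a single embedding coloring $\chi : \bigcup_{\calB \in \KK} \Emb(\calA, \calB) \to k$ defined by $\chi(f) = \gamma_{\cod(f)}(\im(f))$; this is well-defined because $\im(f) \in \binom{\cod(f)}{\calA}$. Theorem~\ref{gfrd.thm.struct-star-property} then produces an integer $n = T(F(\calA), \calS)$, finite by the corollary's hypothesis, and an embedding $h \in \Emb(\calS, \calS)$ satisfying $\bigl| \chi\bigl( \bigcup_{\calB \in \KK} h \star \Emb(\calA, \calB) \bigr) \bigr| \le n$. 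I set $\calS' := \im(h)$, a substructure of $\calS$ isomorphic to $\calS$.

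Finally, I build the choice function. For each $\calE \in \binom{\calS'}{\calA}$, let $\tilde\calE := h^{-1}(\calE)$; this is a finite substructure of $\calS$ isomorphic to $\calA$, so I pick any isomorphism $f_\calE : \calA \to \tilde\calE$. With $\iota_{F(\tilde\calE)}$ an inclusion, unwinding \eqref{gfrd.eq.star-def} yields $\im(h \star f_\calE) = h(f_\calE(A)) = h(\tilde E) = E$, while $c(\calE) := \cod(h \star f_\calE)$ is, by the definition of $\Phi$, the initial segment of $\calS$ strictly below the $h$-image of the adjoined point of $F(\tilde\calE)$, and this initial segment contains $\calE$ as a substructure. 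Thus $\gamma_{c(\calE)}(\calE) = \chi(h \star f_\calE)$, and all such values lie in a set of at most $n$ colors, proving the claim.

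The step I expect to require the most care is the unwinding in the last paragraph: checking that, with $(\iota_\calB)$ chosen as inclusions, $h \star f_\calE$ acts on underlying sets exactly as ``apply $f_\calE$, then $h$'', and that $\calE$ sits literally (not just up to isomorphism) as a substructure of $c(\calE)$. Both amount to tracing through the definitions of $F$ and $\Phi$ recalled in the first paragraph.
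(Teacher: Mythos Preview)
Your proposal is correct and follows essentially the same route as the paper's proof: encode the family $(\gamma_\calB)$ as an embedding coloring $\chi(f)=\gamma_{\cod(f)}(\im(f))$, apply Theorem~\ref{gfrd.thm.struct-star-property} to obtain $h$, set $\calS'=\im(h)$, and for each $\calE\in\binom{\calS'}{\calA}$ define $c(\calE)=\cod(h\star g)$ where $g:\calA\to h^{-1}(\calE)$ is an isomorphism. The only cosmetic difference is that the paper notes the isomorphism $g$ is \emph{unique} (since all structures in $\Fin(\calS)$ are linearly ordered), whereas you ``pick any''; but uniqueness is not needed for the argument, only the existence of some choice, so this is harmless.
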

\begin{proof}
  By Theorem~\ref{gfrd.thm.struct-star-property}~$(b)$ we know that $\calS$ is approximable in $\Fin(\calS)$.
  Let $F : \Fin(\calS) \to \Fin(\calS)$ be the functor and $(\Phi_\calA)_{\calA \in \Fin(\calS)}$ the family 
  constructed in Theorem~\ref{gfrd.thm.struct-star-property}~$(a)$ and $(b)$, respectively.
  For each $\calB \in \Fin(\calS)$ let $\iota_\calB : \calB \hookrightarrow \calS$ be the inclusion $\iota_\calB(x) = x$.

  Take any $\calA \in \Fin(\calS)$ and let $n \in \NN$ be the integer provided for $\calA$ by the conclusion of
  Theorem~\ref{gfrd.thm.struct-star-property}~$(c)$. Take any $k \in \NN$ and for each $\calB \in \Fin(\calS)$ fix a coloring
  $\gamma_\calB : \binom \calB \calA \to k$.

  Let us define $\chi : \bigcup_{\calB \in \Fin(\calS)} \Emb(\calA, \calB) \to k$ as follows:
  for $\calB \in \Fin(\calS)$ and $f \in \Emb(\calA, \calB)$ let
  $$
    \chi(f) = \gamma_\calB(\im(f)) = \gamma_{\cod(f)}(\im(f)).
  $$
  By Theorem~\ref{gfrd.thm.struct-star-property}~$(c)$ there is an embedding $h : \calS \hookrightarrow \calS$
  such that
  \begin{equation}\label{gfrd.eq.final-1}
    \textstyle
    \big|\chi \big( \bigcup_{\calB \in \Fin(\calS)} h \star \Emb(\calA, \calB) \big) \big| \le n,
  \end{equation}
  where $\star$ is defined by \eqref{gfrd.eq.star-def} with respect to $F$, $(\Phi_{\calA})_{\calA \in \Fin(\calS)}$
  and $(\iota_{\calB})_{\calB \in \Fin(\calS)}$.

  Let $\calS' = \im(h)$. Let us define $c : \binom{\calS'}\calA \to \Fin(\calS)$ as follows.
  Take any $\calE \in \binom{\calS'}{\calA}$. Since $\calS$ is an enumerated structure, $\calA$ and $\calE$ are finite
  ordered structures, so there is a unique isomorphism $f : \calA \to \calE$. Moreover, there is a unique $\calC \in \binom \calS \calA$
  such that $h(\calC) = \calE$. Let $g : \calA \to \calC$ be the unique isomorphism and note that
  $
    f = h|^\calC_\calE \circ g
  $,
  where $h|^\calC_\calE$ is the restriction of $h$ to the domain $\calC$ and codomain $\calE$. Now, let
  $$
    c(\calE) = \cod(h \star g).
  $$
  Recall that $h \star g = \Phi_\calC(h \circ \iota_{F(\calC)}) \circ g$ whence $\im(h \star g) = \calE$ by the
  choice of $\iota_{F(\calC)}$ and $\Phi_\calC$. Therefore,
  $$\textstyle
    \gamma_{c(\calE)}(\calE) = \gamma_{\cod(h \star g)}(\im(h \star g)) = \chi(h \star g) \in \chi \big( \bigcup_{\calB \in \Fin(\calS)} h \star \Emb(\calA, \calB) \big).
  $$
  Consequently,
  $$\textstyle
    \Big\{\gamma_{c(\calE)}(\calE) : \calE \in \binom{\calS'}\calA \Big\}
    \subseteq
    \chi \big( \bigcup_{\calB \in \Fin(\calS)} h \star \Emb(\calA, \calB) \big),
  $$
  which together with \eqref{gfrd.eq.final-1} concludes the proof.
\end{proof}

\section*{Acknowledgements}

The author would like to express his gratitude to J.~Hubi\v cka for many useful comments about big Ramsey degrees of metric spaces.

This research was supported by the Science Fund of the Republic of Serbia, Grant No.~7750027:
Set-theoretic, model-theoretic and Ramsey-theoretic phenomena in mathematical structures: similarity and diversity -- SMART.


\end{document}